\newtheorem{thm}{Theorem} [section]
\newtheorem{cor}[thm]{Corollary}
\newtheorem{lem}[thm]{Lemma}
\newtheorem{prop}[thm]{Proposition}
\theoremstyle{definition}
\newtheorem{definition}[thm]{Definition}
\theoremstyle{remark}
\newtheorem{rem}[thm]{Remark}
\numberwithin{equation}{section}
\begin{document}

%Referring commands:
\newcommand{\thmref}[1]{Theorem~\ref{#1}}
\newcommand{\secref}[1]{Section~\ref{#1}}
\newcommand{\lemref}[1]{Lemma~\ref{#1}}
\newcommand{\propref}[1]{Proposition~\ref{#1}}
\newcommand{\corref}[1]{Corollary~\ref{#1}}
\newcommand{\remref}[1]{Remark~\ref{#1}}
\newcommand{\eqnref}[1]{(\ref{#1})}

\newcommand{\exref}[1]{Example~\ref{#1}}

%Theorem for the introduction
\newtheorem{innercustomthm}{{\bf Theorem}}
\newenvironment{customthm}[1]
  {\renewcommand\theinnercustomthm{#1}\innercustomthm}
  {\endinnercustomthm}
  
  \newtheorem{innercustomcor}{{\bf Corollary}}
\newenvironment{customcor}[1]
  {\renewcommand\theinnercustomcor{#1}\innercustomcor}
  {\endinnercustomthm}
  
  \newtheorem{innercustomprop}{{\bf Proposition}}
\newenvironment{customprop}[1]
  {\renewcommand\theinnercustomprop{#1}\innercustomprop}
  {\endinnercustomthm}

% Simplified symbols:
\newcommand{\B}{{\mathbf B}}
\newcommand{\ch}{\text{ch }}
\newcommand{\C}{{\mathbb C}}
\newcommand{\N}{{\mathbb N}}
\newcommand{\F}{{\mf F}}
\newcommand{\Q}{\mathbb {Q}}
\newcommand{\Z}{{\mathbb Z}}
\newcommand{\K}{{\mathbb Q}}

\newcommand{\la}{\lambda}
\newcommand{\ep}{\epsilon}
\newcommand{\h}{\mathfrak h}
\newcommand{\n}{\mf n}
\newcommand{\A}{{\mathcal A}}
\newcommand{\aA}{{}_\A}
\newcommand{\aAp}{{}_\A'}
\newcommand{\aL}{{}_\A L}
\newcommand{\aM}{{}_\A M}
\newcommand{\Hom}{\text{Hom}\,}
\newcommand{\Ind}{\text{Ind}\,}
\newcommand{\D}{\mc D}
\newcommand{\La}{\Lambda}
\newcommand{\dt}{\mathord{\hbox{${\frac{d}{d t}}$}}}
\newcommand{\E}{\EE}
\newcommand{\ba}{\tilde{\pa}}
\newcommand{\half}{\frac{1}{2}}
\newcommand{\llb}{\llbracket}
\newcommand{\mc}{\mathcal}
\newcommand{\mf}{\mathfrak}
\newcommand{\hf}{\frac{1}{2}}
\newcommand{\id}{\text{id}}
\newcommand{\hgl}{\widehat{\mathfrak{gl}}}
\newcommand{\gl}{{\mathfrak{gl}}}
\newcommand{\hz}{\hf+\Z}
\newcommand{\I}{{I}}
\newcommand{\dinfty}{{\infty\vert\infty}}
\newcommand{\SP}{\overline{\mathcal P}}
\newcommand{\ov}{\overline}
\newcommand{\one}{\bold{1}}
\newcommand{\rrb}{\rrbracket}
\newcommand{\wt}{\widetilde}
\newcommand{\wh}{\widehat}
\newcommand{\sL}{\ov{\mf{l}}}
\newcommand{\hh}{\widehat{\mf{h}}}
\newcommand{\even}{{\bar 0}}
\newcommand{\odd}{{\bar 1}}
\newcommand{\mscr}{\mathscr}
\newcommand{\hZ}{{\underline{\mathbb Z}}}
\newcommand{\vs}{\varsigma}

\newcommand{\blue}[1]{{\color{blue}#1}}
\newcommand{\red}[1]{{\color{red}#1}}
\newcommand{\green}[1]{{\color{green}#1}}
\newcommand{\white}[1]{{\color{white}#1}}
\newcommand{\ang}[1]{\langle #1 \rangle}
\newcommand{\wtd}{\widetilde}
\newcommand{\cbinom}[2]{\left\{ \begin{matrix} #1\\#2 \end{matrix} \right\}}
\newcommand{\bbinom}[2]{\begin{bmatrix}#1 \\ #2\end{bmatrix}}
\newcommand{\qbinom}[2]{\begin{bmatrix}
		#1\\#2
\end{bmatrix} }

\newcommand{\aaf}{\mathfrak a}
\newcommand{\bb}{\mathfrak b}
\newcommand{\cc}{\mathfrak c} 
\newcommand{\g}{\mathfrak g}
\newcommand{\qq}{\mathfrak q}
\newcommand{\qn}{\mathfrak q (n)}
\newcommand{\VV}{\mathbb V}
\newcommand{\WW}{\mathbb W}
\newcommand{\TT}{\mathbb T}
\newcommand{\EE}{\mathbb E}
\newcommand{\FF}{\mathbb F}
\newcommand{\KK}{\mathbb K}
\newcommand{\uu}{\mathfrak u}
\newcommand{\SU}{\overline{\mathfrak u}}

%% quantum covering groups
\newcommand{\ff}{\bf{f}}
\newcommand{\ffpr}{{}'\bf{f}}
\newcommand{\UU}{\mathbf U}
\newcommand{\UUi}{\UU^\imath}
\newcommand{\ipsi}{\psi_{\imath}}
\newcommand{\UUdot}{\dot{\bold{U}}}
\newcommand{\UUidot}{\dot{\UU}^{\imath}}
\newcommand{\tK}{\widetilde{K}}
\newcommand{\tJ}{\widetilde{J}}

\newcommand{\End}{\text{End}\,}

%% added by Ian
\newcommand{\restr}[1]{\hspace{-0.3em}\mid_{#1}}

\newcommand{\tya}[1]{{}^A\!#1}
\newcommand{\tyb}[1]{{}^B\!#1}
\newcommand{\tyd}[1]{{}^D\!#1}
\newcommand{\tyg}[1]{{}^G\!#1}
\newcommand{\tyeven}[1]{{#1}^+}
\newcommand{\tyodd}[1]{{#1}^-}
\newcommand{\aeven}[1]{\tya\tyeven{#1}}
\newcommand{\aodd}[1]{\tya\tyodd{#1}}
\newcommand{\beven}[1]{\tyb\tyeven{#1}}
\newcommand{\bodd}[1]{\tyb\tyodd{#1}}
\newcommand{\deven}[1]{\tyd\tyeven{#1}}
\newcommand{\dodd}[1]{\tyd\tyodd{#1}}
\newcommand{\geven}[1]{\tyg\tyeven{#1}}
\newcommand{\godd}[1]{\tyg\tyodd{#1}}

\newcommand{\pae}{\aeven\partial}
\newcommand{\pao}{\aodd\partial}
\newcommand{\pbe}{\beven\partial}
\newcommand{\pbo}{\bodd\partial}
\newcommand{\pde}{\deven\partial}
\newcommand{\pdo}{\dodd\partial}
\newcommand{\pge}{\geven\partial}
\newcommand{\pgo}{\godd\partial}

\newcommand{\san}{\tya{s}}
\newcommand{\sae}{\aeven{s}}
\newcommand{\sao}{\aodd{s}}
\newcommand{\sbn}{\tyb{s}}
\newcommand{\sbe}{\beven{s}}
\newcommand{\sbo}{\bodd{s}}
\newcommand{\sdn}{\tyd{s}}
\newcommand{\sde}{\deven{s}}
\newcommand{\sdo}{\dodd{s}}
\newcommand{\sgn}{\tyg{s}}
\newcommand{\sge}{\geven{s}}
\newcommand{\sgo}{\godd{s}}

\newcommand{\Res}{\text{Res}\,}

\usetikzlibrary{cd}

%new 
\newcommand{\al}{\alpha}
\newcommand{\bx}{\texttt{x}}
\newcommand{\Cl}{{\mathcal Cl}}
\newcommand{\HH}{\mathfrak H}
\newcommand{\hgt}{\text{ht}}
\newcommand{\nH}{\mathcal H^-}
\newcommand{\osp}{\mathfrak{\lowercase{osp}}}
\newcommand{\ds}{\displaystyle}
\newcommand{\wgt}{\text{wt}}
\newcommand{\spn}{\text{span}}
\newcommand{\diag}{\text{diag}}

\newcommand{\Iblack}{\I_{\bullet}}
\newcommand{\wb}{w_\bullet}
\newcommand{\UIblack}{\U_{\Iblack}}

% i{}divided powers
%\newcommand{\dvev}[1]{{\mathfrak{t}}_{\ev}^{{(#1)}}}
%\newcommand{\dvo}[1]{{\mathfrak{t}}_{\odd}^{{(#1)}}}
%\newcommand{\dvd}[1]{t_{\odd}^{{(#1)}}}
%\newcommand{\dvp}[1]{t_{\ev}^{{(#1)}}}

% canonical bases
\newcommand{\cB}{{\bf B}} % changed from mathcal to bold type\

\title[Canonical bases arising from $\imath$quantum covering groups]
{Canonical bases arising from $\imath$quantum \\
covering groups of Kac-Moody type}

\author[Christopher Chung]{Christopher Chung}
\address{Okinawa Institute of Science and Technology, Okinawa, Japan 904-0495}
\email{christopher.chung@oist.jp}

\begin{abstract}  
For $\imath$quantum covering groups $(\UU, \UU^\imath)$ of super Kac-Moody type, we construct $\imath$-canonical bases for the highest weight integrable $\UU$-modules and their tensor products regarded as $\UU^\imath$-modules, as well as a canonical basis for the modified form $ \UUidot $ of the $\imath$quantum group $\UU^\imath$, using the $ \imath^\pi $-divided powers, rank one canonical basis for $\UUi$.  
\end{abstract}

%\vspace{.3cm}
\maketitle

\setcounter{tocdepth}{1}
\tableofcontents

%%%%%
%%%%%
\section{Introduction}

\subsection{Background}
\label{subsec:history}

A quantum symmetric pair $(\UU,\UUi)$ is a quantization of the symmetric pair of enveloping algebras $ (\UU(\g), \UU(\g^\theta)) $ where $ \theta : \g \to \g $ is an involution of the Lie algebra $ \g $. Originally developed for applications in harmonic analysis for quantum group analogs of symmetric spaces, G.~Letzter developed a comprehensive theory of quantum symmetric pairs for all semisimple $\g$ in \cite{Le99}. The algebraic theory of quantum symmetric pairs was subsequently extended to the setting of Kac-Moody algebras in \cite{Ko14}. The \emph{$\imath$quantum group} $ \UUi $ is a subalgebra of the quantum group $ \UU $ satisfying a \emph{coideal property}; coideal subalgebras provide important substructure for $ \UU $, since Hopf subalgebras are rare `in nature'.

More recent developments have made it apparent that quantum symmetric pairs play an important role in representation theory at large. In a series of papers,  H.~Bao and W.~Wang proposed a program of canonical bases for quantum symmetric pairs \cite{BW18a,BW18b,BW18c}. They performed their program for the Type AIII/IV symmetric pairs $(\mathfrak{sl}_{2N},\mathfrak{s}(\gl_N\times \gl_N))$ and $(\mathfrak{sl}_{2N+1},\mathfrak{s}(\gl_N\times \gl_{N+1}))$ and applied it to tensor products of their $ \UUi $-modules, establishing a Kazhdan-Lusztig theory and irreducible character formula for the category $\mc{O}$ of the ortho-symplectic Lie superalgebra $\mathfrak{osp}(2n{+}1\,|\,2m)$. Together with previously known results, these recent developments suggest that quantum symmetric pairs allow as deep a theory as quantized enveloping algebras themselves. In fact, $\UU$ can be viewed as a special type of quantum symmetric pair, the \emph{diagonal quantum symmetric pair} ($\UU \otimes \UU$, $ \imath(\UU) $) where $ \imath = (\omega \otimes 1)\Delta : \UU \to \UU \otimes \UU $. It is thus reasonable to expect that many results about quantized groups have their counterparts in the realm of quantum symmetric pairs. 

%In recent years, it has become increasingly clear that a number of fundamental constructions for quantum groups admit highly nontrivial generalizations to the setting of QSP. In \cite{BW18b} (also cf. \cite{BW18a}), the authors developed a theory of $\imath$-canonical bases for the QSP $(\UU, \UUi)$ {\em of finite type}, on $\UUi$-modules as well as a modified form $\UUidot$. The constructions of quasi K-matrix and universal K-matrix for QSP \cite{BW18a, BK18} have played a significant role too. 
%The $\imath$quantum group $\UUi$ of quasi-split type AIII and its $\imath$-canonical basis have applications to super Kazhdan-Lusztig theory of type BCD \cite{BW18a, Bao17}, admit a geometric realization \cite{BKLW, LW18} and a KLR type categorification \cite{BSWW}. 
%%As demanded by the development of $\imath$canonical bases, the notions of quasi K-matrix and K-matrix for QSP were formulated by the authors \cite{BW18a} and developed more fully by Balagovic-Kolb in \cite{BK18}.
%

A quantum covering group $\UU_\pi$, introduced in 
%Series of papers by Clark, Hill, and Wang in
\cite{CHW13} is an algebra defined via a super Cartan datum $I$ (a finite indexing set associated to Kac-Moody superalgebras with no isotropic odd roots). $ \UU_\pi $ depends on two parameters $ q $ and $ \pi $, where $ \pi^2 = 1 $. A quantum covering group specializes at $ \pi = 1 $ to the quantum group above, and at $ \pi = -1 $ to a quantum supergroup of anisotropic type (see \cite{BKM98}). In addition to the usual Chevalley generators, we have generators $J_i$ for each $ i \in I $. If one writes $ K_i $ as $ q^{h_i} $, then analogously we will have $ J_i = \pi^{h_i} $. 
% A crucial advantage of these new generators: integrable modules for the corresponding quantum supergroups for all dominant integral weights. 
The parameter $\pi$ can be seen as a shadow of a parity shift functor in e.g. D.~Hill and W.~Wang's (\cite{HW15}) categorification of quantum groups by the \emph{spin} quiver Hecke superalgebras introduced in \cite{KKT16}. Since then, further progress has been made on the odd/spin/super categorification of quantum covering groups; see \cite{KKO14, EL16, BE17}. 

Much of the theory for quantum groups, have parallel constructions in the realm of quantum covering groups. In particular, a theory of canonical bases for integrable modules of $ \UU_\pi $ and its modified (idempotented) form $ \UUdot_\pi $ has been developed, in \cite{CHW14,Cl14}. 

\subsection{$\imath^\pi$-divided powers}
For the negative half $ \UU^- $ of the quantum group in rank one $ \UU = \UU_q(\mf{sl}_2) $, the Lusztig divided powers are monomials in a single variable $F$, and they form the canonical basis for $ \UU^- $. The canonical basis for $ \UUi $ in rank one is formed by the \emph{$\imath$-divided powers}, introduced in \cite{BW18b, BW18c} and further explored in \cite{BeW18}. Instead of being monomials, they are polynomials in a single variable $ B $. They give bases for finite-dimensional simple $\mf{sl}_2$-modules, and have two different formulas, $ B^{(n)}_{\even} $ and $B^{(n)}_{\odd}$, depending on the parity of the corresponding highest weight, which is a non-negative integer. 
The $\imath$-divided powers and their expansion formulas in \cite{BeW18} formed a cornerstone of the construction of the Serre presentation for quasi-split $\imath$-quantum groups established in H.~Chen, M.~Lu and W.~Wang in \cite{CLW18}. In \cite{BW18b,BW18c}, $\imath$-divided powers for $ i \in I $ with $ \tau i = i $ were defined using the same formulas, and then shown to generate as an algebra the integral form $ {}_\A \UUidot $ of the modified quantum group. In \cite{C19}, the $\imath$-divided powers above are shown to have a generalization to $ \UUi_\pi $, the \emph{$\imath^\pi$-divided powers} $ B_{i,\odd}^{(m)} $ and $ B_{i,\even}^{(m)} $ which are given in the formulas \eqref{eq:piDPodd} and \eqref{eq:piDPev} below for $ i \in I $ with $ \tau i = i $. The new facets $\pi$ and $J$ of quantum covering groups are incorporated into these formulas, and when we specialize at $ \pi = 1 $ and $ \tJ_i = 1 $, we obtain the $\imath$-divided powers above. The $ \imath^\pi $-divided powers also satisfy a collection of expansion formulas which are used to give a Serre presentation for $ \UUi_\pi $ and define a bar-involution on $ \UUi $. 

\subsection{Quasi $K$-matrix and canonical basis for $\UUi_\pi$}
For regular quantum groups, the bar involutions $ \ipsi $ on $ \UUi $ and $ \psi $ on $ \UU $ are not compatible; $ \ipsi $ is not simply the restriction of $ \psi $ to the subalgebra $ \UUi $. However, one can define a quasi-$K$-matrix $ \Upsilon $ that `intertwines' these two bar involutions. In the case of the diagonal quantum symmetric pair, the quasi-$K$-matrix arises naturally from Lusztig's quasi $\mc R$-matrix. The quasi-$K$-matrix is applied in \cite{BW18b,BW18c} to transform involutive based $ \UU $-modules ($\UU$-modules with distinguished bases compatible with the bar-involution $\psi$ on $\UU$), into involutive based $ \UUi $-modules, compatible with the bar-involution $\ipsi$ on $\UUi$.

The quasi-$K$-matrix $\Upsilon$ is invertible, and its inverse is obtained by applying the bar involution. Crucially, $ \Upsilon $ has the property that it preserves the integrality of the $ \A $-forms of integrable highest weight $ \UUi_\pi $-modules and their tensor products. Using this property of integrality of the action of their quasi-$K$-matrix, Bao and Wang defined in {\em loc. cit.} a new bar involution on based $ \UU $-modules (modules $M$ with a distinguished basis $B$, and compatible involution $ \psi $) %including integrable highest weight $\UU$-modules and their tensor products, 
thus enabling the construction of $\imath$-canonical bases of these modules (which are now based $ \UUi $-modules) from their canonical bases. With the $ \imath^\pi $-divided powers above, these constructions also lead to a theory of canonical basis for integrable based $ \UUi_\pi $-modules, which we develop here in this article. 

\subsection{Organization}
The rest of this article is organized as follows. In the next section, we introduce basic notation and notions for quantum covering groups. Then, in section~\ref{sec:Ui} we describe $ \UUi_\pi $ and the $ \imath^\pi $-divided powers. In section~\ref{sec:quasiKmatrix}, the quasi-$K$-matrix $\Upsilon$ for $\UUi_\pi$ is constructed and in section~\ref{sec:upintegrality} the integrality of its action is established, by which we mean that $\Upsilon$ preserves the integral $\A$-forms on integrable highest weight $\UU_\pi$-modules and their tensor products. We conclude by constructing the $\imath$-canonical basis for based $\UUi_\pi$-modules in a section~\ref{sec:iCBmodules} followed by canonical basis for the modified form $ \UUidot_\pi $ in section~\ref{sec:CBmodified} generalizing \cite{BW18b,BW18c}. \\

\noindent
{\bf Remark on notation.}
For the remaining sections we will drop the subscript $ \pi $ from $ \UU_\pi $ and related notation in the following chapters, so $ \UU $ will be understood to refer to the quantum covering group going forward. We will explicitly mention when we are referring to the usual quantum group e.g. when we specialize $ \pi = 1 $. 

\subsection*{Acknowledgments.}
The author is indebted to advisor Weiqiang Wang for his guidance and patience. This research is partially supported by Wang's NSF grant DMS-1702254, including GRA supports for the author.

%%%%%
%%%%%
\section{Quantum covering groups and canonical bases}
\label{sec:QCG}

In this section, we will recall the definition of a quantum covering group from \cite{CHW13} starting with a {\em super Cartan datum} and a root datum. A {\em Cartan datum} is a pair $(I,\cdot)$ consisting of a finite
set $I$ and a symmetric bilinear form $\nu,\nu'\mapsto \nu\cdot\nu'$
on the free abelian group $\Z[I]$ with values in $\Z$ satisfying
\begin{enumerate}
	\item[(a)] $d_i=\frac{i\cdot i}{2}\in \Z_{>0}$;
	
	\item[(b)]
	$2\frac{i\cdot j}{i\cdot i}\in -\N$ for $i\neq j$ in $I$, where $\N
	=\{0,1,2,\ldots\}$.
\end{enumerate}
If the datum can be decomposed as $ I=I_{\bf \even} \coprod I_{\bf \odd} $ such that
\begin{enumerate}
	\item[(c)] $I_{\bf \odd}\neq\emptyset$,
	\item[(d)] $2\frac{i\cdot j}{i\cdot i} \in 2\Z$ if $i\in I_{\bf \odd}$,
	\item[(e)] $d_i\equiv p(i) \mod 2, \quad \forall i\in I.$
\end{enumerate}
then we will called it a (bar-consistent) {\em super Cartan datum}. Condition [(e)] is known as the `bar-consistency' condition and is almost always satisfied for super Cartan data of finite or affine type (with one exception).

Note that (d) and (e) imply that
\begin{enumerate}
	\item[(f)] $i\cdot j\in 2\Z$ for all $i,j\in I$.
\end{enumerate}

The $i\in I_{\bf \even}$ are called even, $i\in I_{\bf \odd}$ are called odd. We
define a parity function $p:I\rightarrow\{0,1\}$ so that $i\in
I_{\ov{p(i)}}$. We extend this function to the homomorphism
$p:\Z[I]\rightarrow \Z$. Then $p$ induces a $\Z_2$-grading on
$\Z[I]$ which we shall call the parity grading.
%We define the {\em height} of $\nu=\sum_{i\in I}\nu_i i\in \Z[I]$ by $\height(\nu)=\sum \nu_i$.

A super Cartan datum $(I,\cdot)$ is said to be of {\em finite}
(resp. {\em affine}) type exactly when $(I,\cdot)$ is of
finite (resp. affine) type as a Cartan datum (cf. \cite[\S~2.1.3]{Lu94}).
In particular, the only super Cartan datum of finite type is the one
corresponding to the Lie superalgebras of type
$B(0,n)$ for $n\geq 1$ i.e. the orthosymplectic Lie superalgebras $ \osp(1|2n) $.

A {\em root datum} associated to a super Cartan datum $(I,\cdot)$
consists of
\begin{enumerate}
	\item[(a)]
	two finitely generated free abelian groups $Y$, $X$ and a
	perfect bilinear pairing $\ang{\cdot, \cdot}:Y\times X\rightarrow \Z$;
	
	\item[(b)]
	an embedding $I\subset X$ ($i\mapsto i'$) and an embedding $I\subset
	Y$ ($i\mapsto i$) satisfying
	
	\item[(c)] $\ang{i,j'}=\frac{2 i\cdot j}{i\cdot i}$ for all $i,j\in I$.
\end{enumerate}
We will always assume that the root datum is {\em $X$-regular} (respectively {\em $Y$-regular}) image of the embedding $I\subset X$
(respectively, the image of the embedding $I\subset Y$) is linearly independent in $X$ (respectively, in $Y$).

We also define a partial order $\leq$ on the weight lattice $X$ as follows: for $\la, \la' \in X$, 
\begin{equation}
\label{eq:leq}
\lambda \le \lambda' \text{ if and only if } \lambda' -\lambda \in \N[\I]. 
\end{equation}

The matrix $ A := (a_{ij}) := \ang{i,j'} $ is a {\em symmetrizable generalized super Cartan matrix}: if $ D = \diag(d_i\,|\, i \in I) $, then $ DA $ is symmetric.

%Let $X^+=\{\lambda\in X\mid \ang{i,\lambda}\in \N \text{ for all } i
%	\in I\}$. Note that there are no additional ``evenness'' assumptions for $X^+$.

Let $\pi$ be a parameter such that
$$\pi^2=1.
$$
For any $i\in I$, we set
$$q_i=q^{i\cdot i/2}, \qquad \pi_i=\pi^{p(i)}.
$$
Note that when the datum is consistent, $\pi_i=\pi^{\frac{i\cdot
		i}{2}}$; by induction, we therefore have
$\pi^{p(\nu)}=\pi^{\nu \cdot \nu/2}$ for $\nu\in \Z[I]$. 
We extend this notation so that if
$\nu=\sum \nu_i i\in \Z[I]$, then
$$
q_\nu=\prod_i q_i^{\nu_i}, \qquad \pi_\nu=\prod_i \pi_i^{\nu_i}.
$$
For any ring $R$ we define a new ring $R^\pi =R[\pi]/(\pi^2-1)$ 
(with $\pi$ commuting with $R$). Below, we will work over $\K(q)^\pi$ where $\K$ is a field of characteristic $0$ and occasionally $\A^\pi$ where $ \A := \Z[q,q^{-1}] $.

Recall also the {\em $(q,\pi)$-integers} and {\em $(q,\pi)$-binomial coefficients} in \cite{CHW13}: we shall denote
\[
[n]=\bbinom{n}{1}=\frac{({\pi} q)^n-q^{-n}}{{\pi} q - q^{-1}}\quad
\text{for } n\in\Z,
\]
\[
[n]^!=\prod_{s=1}^n [s]\quad \text{for } n\in\N,
\]
and with this notation we have
\[
\bbinom{m}{n}=\frac{[m]^!}{[n]^![m-n]^!}\quad \text{for
}0\leq n \leq m.
\]
We denote by $[n]_{i}, [m]_{i}^!,$ and $\qbinom{n}{m}_{i}$ the variants of $[n], [m]!,$ and $\qbinom{n}{m}$ with $q$ replaced by $q_i$ and $ \pi $ replaced by $ \pi_i $, and $ \bbinom{m}{n}_{q^2} $ the variant with $q$ replacing $ q^2 $. 
%Note that the the bar-consistency condition above is required for the bar-invariance of the $(q,\pi)$-integers $[n]_i$ and the $(q,\pi)$-binomial
%coefficients.

For any $i\neq j$ in $I$, we define the following polynomial in two (noncommutative) variables $x$ and $y$:
\begin{equation}
\label{qpipoly}
F_{ij}(x,y) = \sum_{n=0}^{1-a_{ij}}(-1)^{n}\pi_i^{np(j)+\binom{n}{2}} \bbinom{1 - a_{ij}}{n}_i x^n y x^{1 - a_{ij} - n}.
\end{equation}
Also, we have 

Assume that a root datum $(Y,X, \ang{\,,\,})$
of type $(I,\cdot)$ is given. The {\em quantum covering group} $ \UU $ of
type $(I,\cdot)$ is the associative $\K(q)^\pi$-superalgebra with generators
\[\
E_i\quad(i\in I),\quad F_i\quad (i\in I), \quad J_{\mu}\quad (\mu\in
Y),\quad K_\mu\quad(\mu\in Y),
\]
with parity $p(E_i)=p(F_i)=p(i)$ and
$p(K_\mu)=p(J_\mu)=0$, subject to the relations (a)-(f) below for
all $i, j \in I, \mu, \mu'\in Y$:
\[
\tag{R1} K_0=1,\quad K_\mu K_{\mu'}=K_{\mu+\mu'},
\]
\[
\tag{R2} J_{2\mu}=1, \quad J_\mu J_{\mu'}=J_{\mu+\mu'},
\]
\[
\tag{R3} J_\mu K_{\mu'}=K_{\mu'}J_{\mu},
\]
\[
\tag{R4} K_\mu E_i=q^{\ang{\mu,i'}}E_iK_{\mu}, \quad
J_{\mu}E_i=\pi^{\ang{\mu,i'}} E_iJ_{\mu},
\]
\[
\tag{R5} \; K_\mu F_i=q^{-\ang{\mu,i'}}F_iK_{\mu}, \quad J_{\mu}F_i=
\pi^{-\ang{\mu,i'}} F_iJ_{\mu},
\]
\[
\tag{R6} E_iF_j-\pi^{p(i)p(j)}
F_jE_i=\delta_{i,j}\frac{\wtd{J}_{i}\wtd{K}_i-\wtd{K}_{-i}}{\pi_iq_i-q_i^{-1}},
\label{R6}
\]
\[
\tag{R7}(q,\pi)\text{-Serre relations}\qquad F_{ij}(E_i,E_j) = 0 = F_{ij}(F_i,F_j), \text{ for all }i \neq j.
\label{R7}
\]
where for any element $\nu=\sum_i \nu_i i\in \Z[I]$ we have set
$\wtd{K}_\nu=\prod_i K_{d_i\nu_i i}$, $\wtd{J}_\nu=\prod_i
J_{d_i\nu_i i}$. In particular, $\wtd{K}_i=K_{d_i i}$,
$\wtd{J}_i=J_{d_i i}$. Under the bar-consistency condition,
$\wtd{J}_i=1$ for $i\in I_{\bf \even}$ while $\wtd{J}_i=J_i$ for $i\in
I_{\bf \odd}$. Note that by the same condition $ a_{ij} $ is always even for $ i \in I_{\bf \odd} $, and so $ J_i $ is central for all $i \in I$. As usual, denote by $ \UU^- $, $ \UU^+ $ and $ \UU^0 $ the subalgebras of $ \UU $ generated by $ \{ E_i \,|\, i \in I \} $, $ \{ F_i \,|\, i \in I \} $ and $ \{ J_\mu, K_\mu \,|\, \mu \in Y \} $ respectively. Also denote $ \UU^{0'} = \{ J_i, K_i \,|\, i \in I \} $.

Note that the $(q,\pi)$-Serre relations \eqref{R7} can be rewritten as
\begin{equation}
\label{qpiFSerre}
\sum_{n=0}^{1-a_{ij}}(-1)^{n}\pi_i^{np(j)+\binom{n}{2}}  F_i^{(n)} F_j F_i^{(1 - a_{ij} - n)} = 0
\end{equation}
and
\begin{equation}
\label{qpiESerre}
\sum_{n=0}^{1-a_{ij}}(-1)^{n}\pi_i^{np(j)+\binom{n}{2}}  E_i^{(n)} E_j E_i^{(1 - a_{ij} - n)} = 0,
\end{equation}
where we write $ F_i^{(n)} = F_i^n/[n]^!_{i} $ and $ E_i^{(n)} = E_i^n/[n]^!_{i} $ for $ n \geq 1 $ and $ i \geq 1 $.

Define $\ffpr$ to be the free associative $\Q(q)^\pi$-superalgebra
with $1$ and with even generators $\theta_i$ for $i\in I_0$ and
odd generators $\theta_i$ for $i\in I_\odd$. We abuse notation and
define the parity grading on $\ffpr$ by $p(\theta_i)=p(i)$.
We also have a weight grading $|\cdot|$ on $\ffpr$ defined
by setting $|\theta_i|=i$. 

By \cite[Prop~1.4.1]{CHW13}, there exists a unique symmetric bilinear form $(\cdot,\cdot)$ on $\ffpr$ with values in $\Q$ such that $(1,1)=1$ and 
\begin{equation}
\label{eq:bilinij}
(\theta_i, \theta_j) = \delta_{ij} (1-\pi_i q_i^{-2})^{-1} \text{ for all } i,j\in I. 
\end{equation}

Let $\mathcal{I}$ to denote the radical of $(\cdot, \cdot)$ which is a 2-sided ideal of $\ffpr$, and let $\ff=\ffpr/\mathcal{I}$ be the quotient algebra of $\ffpr$ by its radical. There exists well-defined algebra homomorphisms $\ff\to\UU$: $ x \mapsto x^+ $ with $ \theta_i^+ = E_i $ and image $ \UU^+ $, and $ x \mapsto x^+ $ with $ \theta_i^- = F_i $ and image $ \UU^- $. The algebra $\ff$ has weight space decomposition $\ff=\bigoplus_{\nu} \ff_\nu$ where
$\ff_\nu$ is the image of $\ffpr_\nu$, the weight space of  $\ffpr$ with weight $ \nu = \sum \nu_i i \in \Z[\I] $. We will denote the height of $\nu$ by $\hgt (\mu) =\sum_{i\in \I} \nu_i$ and for any $x \in \ff_\nu$, we set $|x| = \nu$.  Each weight space is finite dimensional. The symmetric bilinear form on $ \ffpr $ descends to a symmetric bilinear form on $\ff$ which is non-degenerate on each weight space.

\subsection{\bf The twisted derivations $r_i$ and $_i r$}
\label{subsec:rtwisted}

Let $i\in I$. There exist unique $\Q(q)^\pi$-linear maps
$r_i,_i r:\ffpr\rightarrow\ffpr$ such that $r_i(1)=_i r(1)=0$ and
$r_i(\theta_j)=_i r(\theta_j)=\delta_{ij}$ satisfying
\[
_i r(xy)=_i r(x)y+\pi^{p(x)p(i)}q^{|x|\cdot i}x_i r(y)
\]
\[
r_i(xy)=\pi^{p(y)p(i)}q^{|y|\cdot i}r_i(x)y+xr_i(y)
\]
for homogeneous $x,y\in\ffpr$. We see that if $x\in \ffpr_\nu$, then
$_i r(x),r_i(x)\in\ffpr_{\nu-i}$ and moreover,
\begin{equation}  \label{eq:derivadjunct}
	(\theta_i y, x) = (\theta_i,\theta_i)(y, _i r(x)), \quad
	(y \theta_i, x) = (\theta_i,\theta_i)(y, r_i(x))
\end{equation}
for all $x,y\in \ffpr$, and both maps descend to
maps on $\ff$ cf. \cite[\S 1.5]{CHW13}. 

The following lemmas on the twisted derivation will be important tools for the construction of the quasi K-matrix in part III. The first is \cite[Lemma~1.5.2]{CHW13}, a direct generalization of \cite[Lemma~1.2.15]{Lu94} for quantum groups: 
\begin{lem}\label{lem:rivanishing}
	Let $x\in \ff_\nu$ where $\nu\in \N[I]$ is nonzero.
	\begin{enumerate}
		\item[(a)] If $r_i(x)=0$ for all $i\in I$, then $x=0$.
		\item[(b)] If ${}_i r(x)=0$ for all $i\in I$, then $x=0$.
	\end{enumerate}
\end{lem}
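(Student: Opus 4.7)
The plan is to exploit the non-degeneracy of the symmetric bilinear form $(\cdot,\cdot)$ on each weight space $\ff_\nu$ together with the adjunction identities \eqref{eq:derivadjunct}, converting the vanishing of $r_i(x)$ (respectively ${}_i r(x)$) for all $i$ into the vanishing of all pairings $(y,x)$ with $y \in \ff_\nu$. This is the direct $\pi$-analogue of Lusztig's argument \cite[Lemma~1.2.15]{Lu94}, and every ingredient is already available in the quantum covering setting.

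For part (a), since $\nu \in \N[I]$ is nonzero, $\hgt(\nu) \geq 1$, so the weight space $\ff_\nu$ is spanned as a $\K(q)^\pi$-vector space by products of the form $y' \theta_i$ with $i \in I$ and $y' \in \ff_{\nu - i}$ (this is inherited from the monomial spanning set of $\ffpr_\nu$ in the free algebra $\ffpr$, where any monomial of positive length ends in some $\theta_i$). Applying the second adjunction in \eqref{eq:derivadjunct}, I get
\[
(y' \theta_i, x) = (\theta_i, \theta_i)\,(y', r_i(x)) = 0
\]
for every such $y'$, because $r_i(x) = 0$ by hypothesis. Thus $(y, x) = 0$ for every $y \in \ff_\nu$, and the non-degeneracy of $(\cdot,\cdot)$ on $\ff_\nu$ forces $x = 0$. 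Part (b) is entirely parallel: $\ff_\nu$ is spanned by elements $\theta_i y'$ (any monomial of positive length begins with some $\theta_i$), and the first adjunction in \eqref{eq:derivadjunct} yields $(\theta_i y', x) = (\theta_i, \theta_i)({}_i r(x), y') = 0$, so again $x = 0$.

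No genuine obstacle arises: the scalar $(\theta_i, \theta_i) = (1 - \pi_i q_i^{-2})^{-1}$ is a unit in $\K(q)^\pi$ by \eqref{eq:bilinij}, non-degeneracy of the form on each weight space of $\ff$ was noted at the end of Section~\ref{sec:QCG}, and the adjunction identities are built into the definition of the twisted derivations. The only thing worth emphasizing is that the argument takes place after passing from $\ffpr$ to the quotient $\ff$, which is where non-degeneracy holds; at the level of $\ffpr$ one would only know that $x$ lies in the radical $\mathcal{I}$.
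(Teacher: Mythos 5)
Your proof is correct and is exactly the standard argument: the paper itself does not prove this lemma but cites it as \cite[Lemma~1.5.2]{CHW13}, whose proof is the same Lusztig-style combination of the adjunction identities \eqref{eq:derivadjunct} with the non-degeneracy of $(\cdot,\cdot)$ on each weight space of $\ff$ and the spanning of $\ff_\nu$ by elements $y'\theta_i$ (resp.\ $\theta_i y'$). Your remark that the argument must be carried out in the quotient $\ff$ rather than in $\ffpr$ is the right point to flag.
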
 

The following lemma is a generalization of \cite[Lemma~1.1]{BW18a} and will play a similar role in our setting:
\begin{lem} \label{lem:rijr}
	$ {}_j r \circ r_i = r_i \circ {}_j r $ for all $ i,j \in I $
\end{lem}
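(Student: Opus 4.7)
The plan is to imitate Lusztig's argument for the analogous commutation relation in the non-super case (\cite[Lemma~1.2.15]{Lu94}) and reduce the statement to associativity in $\ff$ via the adjointness of the twisted derivations with respect to the bilinear form $(\cdot,\cdot)$. Informally, ${}_j r$ is (up to scalar) adjoint to left multiplication by $\theta_j$ while $r_i$ is (up to scalar) adjoint to right multiplication by $\theta_i$, and these two multiplications commute by associativity, so their adjoints must commute as well.

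More precisely, by the adjunction relations \eqref{eq:derivadjunct}, fixing $x \in \ff_\nu$ and an arbitrary $y \in \ff_{\nu-i-j}$, I would evaluate the pairing $(\theta_j y \theta_i,\, x)$ in two ways. Computing with the parenthesization $\theta_j(y \theta_i)$ and applying the left-adjunction first and then the right-adjunction yields
\[
(\theta_j y \theta_i, x) = (\theta_j,\theta_j)(\theta_i,\theta_i)\,\bigl(y,\, r_i({}_j r(x))\bigr),
\]
while using the parenthesization $(\theta_j y)\theta_i$ and applying the two adjunctions in the opposite order yields
\[
(\theta_j y \theta_i, x) = (\theta_i,\theta_i)(\theta_j,\theta_j)\,\bigl(y,\, {}_j r(r_i(x))\bigr).
\]
Equating the two expressions and letting $y$ range over $\ff_{\nu-i-j}$, the non-degeneracy of $(\cdot,\cdot)$ on each weight space of $\ff$ forces $r_i \circ {}_j r(x) = {}_j r \circ r_i(x)$, which is the claimed identity.

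The only point requiring verification in the execution is that the two iterated applications of \eqref{eq:derivadjunct} are legitimate, i.e.\ that the radical $\mathcal{I}$ of $(\cdot,\cdot)$ is stable under both $r_i$ and ${}_j r$, so that the derivations genuinely descend to $\ff$ and the induced bilinear form is still non-degenerate on each weight space; this is already part of the setup recorded in \cite[\S 1.5]{CHW13}, so no new work is required. I do not expect a real obstacle here: no Serre-type calculation or induction on weight is needed, and the result drops out cleanly from associativity in $\ff$ together with the two adjunction formulas and the non-degeneracy of $(\cdot,\cdot)$. The $\pi$- and $q$-twists that appear in the Leibniz rules for ${}_i r$ and $r_i$ are hidden inside the adjunction scalars $(\theta_i,\theta_i)$ and $(\theta_j,\theta_j)$, and hence play no role in the commutation.
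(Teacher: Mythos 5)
Your adjointness argument is internally correct, but it takes a genuinely different route from the paper's, and the difference matters. The paper proves the commutation directly on the free algebra $\ffpr$, by expanding ${}_j r\circ r_i(xy)$ and $r_i\circ {}_j r(xy)$ via the twisted Leibniz rules and checking, with an induction on height, that the $\pi$- and $q$-exponents in the cross terms coincide (using $p(r_k(z))=p(z)-p(k)$ and $|r_k(z)|=|z|-k$). Your route — pairing $\theta_j y\theta_i$ against $x$ with the two parenthesizations, applying \eqref{eq:derivadjunct} twice in each order, cancelling the nonzero scalars $(\theta_i,\theta_i)(\theta_j,\theta_j)$, and invoking non-degeneracy — is cleaner and, as you note, hides all the super/covering bookkeeping inside the adjunction scalars. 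But non-degeneracy of $(\cdot,\cdot)$ holds only on the quotient $\ff=\ffpr/\mathcal{I}$, so what you actually obtain is $r_i({}_j r(x))-{}_j r(r_i(x))\in\mathcal{I}$ for $x\in\ffpr$, i.e.\ the identity of the induced maps on $\ff$, not the equality in $\ffpr$ itself.

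That weaker statement is not quite what the paper uses. In the proof of Lemma~\ref{upequal}, the functionals $\Upsilon^*_L,\Upsilon^*_R$ are defined on all of $\ffpr$ and are shown to annihilate the radical only \emph{later} (via $\Upsilon^*(S_{ij})=0$), so the step equating $\Upsilon^*_R(r_j\circ {}_i r(x'))$ with $\Upsilon^*_L({}_i r\circ r_j(x'))$ needs the two elements to agree in $\ffpr$, not merely modulo $\mathcal{I}$. So either you must upgrade your argument to the free algebra — which essentially forces the paper's direct Leibniz computation, since the form is degenerate there — or you must restructure the quasi-$K$-matrix construction so that it takes place in $\ff$ from the outset. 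As a proof of the commutation on $\ff$ your argument is fine; as a substitute for the paper's lemma in situ, it leaves this gap.
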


\begin{proof}
	It suffices to show this for homogeneous $ x \in \ffpr_{\mu} $, using induction on the height of $\mu$; for $ x = 1 $ both sides are identically $0$, and from their definition, we have 
	\begin{align*}
	r_j \circ {}_i r (xy) = {}_i r(x) r_j(y) + \pi^{p(y) p(j)} q^{|y| \cdot j} & r_j({}_i r(x)) y + \pi^{p(x) p(i)} q^{|x| \cdot i} x r_j({}_i r(y)) \\ 
	&+ \pi^{p(x) \cdot p(i) + p({}_i r(y))p(j)} q^{|x|\cdot i + |{}_i r(y)|\cdot j} r_j (x) {}_i r(y)
	\end{align*}
	and 
	\begin{align*}
	{}_i r \circ r_j (xy) = {}_i r(x) r_j(y) + \pi^{p(y) p(j)} q^{|y| \cdot j} & {}_i r(r_j(x)) y + \pi^{p(x) p(i)} q^{|x| \cdot i} x {}_i r(r_j(y)) \\ 
	&+ \pi^{p(y) \cdot p(j) + p(r_j(x))p(i)} q^{|y|\cdot j + |r_j (x)|\cdot i} r_j (x) {}_i r(y),
	\end{align*} 
	and since $ p(r_k(z)) = p(z) - p(k) $, the $\pi$ powers in the last term of each of the two expressions on the right are both equal to $ p(x) p(i) + p(y) p(j) - p(i) p(j) $; similarly $ | r_k (z) | = |z| - k $ so the $ q $ powers are both $ |x|\cdot i + |y| \cdot j - i \cdot j $, and so the two expressions agree by application of the inductive hypothesis.
\end{proof}

The following proposition from \cite{CHW13} is a key ingredient in the construction of the quasi-$K$-matrix:
\begin{prop}[Prop~2.2.2 of \cite{CHW13}]
	\label{prop:EF222}
	For $x\in \ffpr$ and $i\in I$, we have in $\UU'$
	\vspace{.1in}
	\begin{enumerate}
		\item[(a)] $\displaystyle
		x^+F_i-\pi_i^{p(x)}F_ix^+=\frac{r_i(x)^+\wtd{J}_i\wtd{K}_i-\wtd{K}_{-i}\,\,
			\pi_i^{p(x)-p(i)}\, _i r(x)^+}{\pi_iq_i-q_i^{-1}},
		$
		\vspace{.1in}
		\item[(b)] $\displaystyle
		E_ix^--\pi_i^{p(x)}x^-E_i=\frac{\wtd{J}_i\wtd{K}_{i}\,\,
			_i r(x)^- -\pi_i^{p(x)-p(i)} r_i(x)^-\wtd{K}_{-i}}{\pi_iq_i-q_i^{-1}}.
		$
	\end{enumerate}
\end{prop}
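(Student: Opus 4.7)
My plan is to prove (a) by induction on $\hgt|x|$; part (b) follows by an entirely symmetric argument, with the roles of $E$'s and $F$'s interchanged and those of $r_i$ and ${}_i r$ swapped in position. Base cases are straightforward: for $x=1$ both sides vanish, while for $x=\theta_j$ the identities $r_i(\theta_j)={}_ir(\theta_j)=\delta_{ij}$ reduce (a) to the defining commutation relation (R6).

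For the inductive step I would take a product $z=xy$ with $x,y\in\ffpr$ of positive height and compute $z^+F_i-\pi_i^{p(z)}F_iz^+$ by commuting $F_i$ first past $y^+$ and then past $x^+$, invoking the inductive hypothesis twice. This produces the main term $\pi_i^{p(z)}F_iz^+$ together with four error terms pairing $r_i(x)^+$ or ${}_ir(x)^+$ with $y^+$, and $x^+$ with $r_i(y)^+$ or ${}_ir(y)^+$, each multiplied by either $\wtd{J}_i\wtd{K}_i$ or $\wtd{K}_{-i}$. I would then push $\wtd{J}_i\wtd{K}_i$ out past $y^+$ to the right and $\wtd{K}_{-i}$ past $x^+$ to the left using (R4) and (R5); the $q$- and $\pi$-factors so produced should reassemble the four error terms into $r_i(z)^+\wtd{J}_i\wtd{K}_i$ and $\wtd{K}_{-i}{}_ir(z)^+$ exactly via the super Leibniz rules for $r_i$ and ${}_ir$ recalled in \S\ref{subsec:rtwisted}.

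The main obstacle is this last sign-matching step. Commuting $\wtd{J}_i$ past $y^+$ yields a factor $\pi^{\ang{d_ii,|y|'}}=\pi^{i\cdot|y|}$, whereas the Leibniz rule $r_i(xy)=\pi^{p(y)p(i)}q^{|y|\cdot i}r_i(x)y+xr_i(y)$ only calls for $\pi^{p(y)p(i)}$. The reconciliation uses property (f) of a super Cartan datum, namely $i\cdot k\in 2\Z$ for all $i,k\in I$, which forces $\pi^{i\cdot|y|}=1$; the surviving $\pi_i^{p(y)}$ factor, picked up when $F_i$ is first moved past $y^+$, then supplies the required $\pi^{p(i)p(y)}$. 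A parallel cancellation, aided by $\pi_i^{2p(x)}=1$, handles the ${}_ir$ contributions. Once this parity-and-weight bookkeeping is verified the remaining computation is a mechanical rearrangement and the inductive step closes.
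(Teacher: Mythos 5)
Your proposal is correct: the paper does not prove this proposition but quotes it from \cite{CHW13}, and your induction on $\hgt|x|$ with base case (R6) and the Leibniz rules for $r_i$, ${}_ir$ is exactly the standard argument there (generalizing \cite[3.1.6]{Lu94}). The two reconciliations you flag are the right ones and both check out: $\pi^{i\cdot|y|}=1$ by property (f) so that commuting $\wtd{J}_i\wtd{K}_i$ past $y^+$ leaves only $q^{|y|\cdot i}$ with the $\pi_i^{p(y)}$ from the first commutation supplying $\pi^{p(i)p(y)}$, and $\pi_i^{2p(x)}=1$ absorbs the extra sign in the $x^+\,{}_ir(y)^+$ term.
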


\subsection{\bf Bar-involution and Quasi-$\mathcal{R}$-matrix for $\UU$}
\label{subsec:bar+rmatrix}

There exists a unique $\Q$-algebra involution $\ov{\phantom{r}}$ (the bar-involution) on $ \Q(q)^\pi $ satisfying $\ov{q}= \pi q^{-1}$ and $\ov{\pi}= \pi$. For a bar-consistent super Cartan datum, 
\begin{align}\label{eq:barinvqi}
	\ov{q_i}=\pi_i
	q_i^{-1}.
\end{align}
Furthermore, there exists a bar-involution $\ov{\phantom{r}}:\ffpr\rightarrow\ffpr$
such that $\ov{\theta_i} =\theta_i$ for all $i\in I$ and $\ov{fx}
=\ov{f}\ov{x}$ for $f\in\Q(q)^\pi$ and $x\in \ffpr$. This extends to a unique homomorphism of
$\Q$-algebras $\ov{\white{x}}:\UU\rightarrow\UU$ such that
\[\ov{E_i}=E_i,\quad \ov{F_i}=F_i,\quad \ov{J_{\mu}}=J_{\mu},\quad
\ov{K_\mu}=J_{\mu}K_{-\mu},
\]
and $\ov{fx}=\ov{f}\ov{x}$ for all $f\in\Q(q)^\pi$ and $x\in\UU$. 

We remark here that our conventions for the comultiplication here are the same as in \cite{CHW13}:
\begin{align}
\Delta(E_i) = E_i\otimes 1 + \wtd{J}_i\wtd{K}_i\otimes E_i\qquad
&\Delta(F_i) = F_i\otimes \wtd{K}_{-i} + 1\otimes F_i\quad (\text{for } i\in
I),
\\
\Delta(K_\mu) =K_\mu\otimes K_\mu, \qquad
&\Delta(J_{\mu}) =J_{\mu}\otimes J_{\mu}\quad (\text{for }\mu\in Y).
\end{align}

Let $\widehat{\UU \otimes \UU}$ be the completion of the $\Q(q)^\pi$-modules $\UU \otimes \UU$ with respect to the descending sequence of subspaces 
\[
\UU^+ \UU^0 \big(\sum_{\hgt(\mu) \geq N}\UU_{\mu}^- \big) \otimes \UU  + \UU \otimes \UU^- \UU^0 \big(\sum_{\hgt(\mu) \geq N}\UU_{\mu}^+ \big) ,   \text{ for }N \ge 1, \mu \in \Z[\I].
\]
We have the obvious embedding of $\UU \otimes \UU$ into $\widehat{\UU \otimes \UU}$. 
%We let $\widehat{\UU}^+$ be the closure of $\UU^+$ in $\widehat{\UU}$, and so $\widehat{\UU}^+ \subseteq \widehat{\UU}$. 
By continuity the $\Q(q)^\pi$-algebra structure on $\UU \otimes \UU$ extends to a $\Q(q)^\pi$-algebra structure on $ \widehat{\UU \otimes \UU}$ cf. \cite[\S3.1]{CHW13}). Let $\ov{\phantom{x}}:\UU\otimes \UU\rightarrow \UU\otimes \UU$ be the $\Q$-algebra homomorphism given by $\ov{\phantom{x}}\otimes\ov{\phantom{x}}$. This extends to a $\Q$-algebra homomorphism on the completion. Let $\ov{\Delta}:\UU\rightarrow \UU\otimes\UU$ be the  $\Q(q)^\pi$-algebra homomorphism given by $\ov{\Delta}(x)=\ov{\Delta(\ov{x})}$. In \cite[\S~3.1]{CHW13}, the {\em quasi-$\mathcal R$-matrix} $\Theta$ for $\UU$ that intertwines $\Delta$ and $\ov{\Delta}$ is defined: For $\nu=\sum_i\nu_i i\in \N[I]$, write $\nu =\sum_{a=1}^{\hgt{\nu}} i_{a}$ for $i_a\in I$. Then, set $	e(\nu) =\sum_{a<b} p(i_a)p(i_b) \in \Z. $

\begin{prop}
	\label{prop:rmatrix}
		There is a unique family of elements $\Theta_\nu\in
		\UU^-_\nu\otimes\UU^+_\nu$ (with $\nu\in\N[I]$) such that
		\begin{enumerate}
		\item[(a)]
		$\Theta_0=1\otimes 1$ and $\Theta=\sum_\nu \Theta_\nu\in
		\widehat{\UU \otimes \UU}$ satisfies in $\widehat{\UU \otimes \UU}$ the identity
		$\Delta(u)\Theta=\Theta\ov{\Delta}(u)$ for all $u\in \UU$.
		
		\item[(b)]
		Let $B$ be a $\Q(q)^\pi$-basis of $\ff$ such that $B_\nu=B\cap
		\ff_\nu$ is a basis of $\ff_\nu$ for any $\nu$. Let $\{b^*| b\in
			B_\nu\}$ be the basis of $\ff_\nu$ dual to $B_\nu$ under the bilinear form$(\cdot,\cdot)$. Then,
		\[
		\Theta_\nu=(-1)^{\hgt(\nu)} \pi^{e(\nu)} \pi_\nu q_{\nu}\sum_{b\in B_\nu}
		b^-\otimes b^{*+}\in \UU^-_\nu\otimes\UU^+_\nu.
		\]
	\end{enumerate}
\end{prop}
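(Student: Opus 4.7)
The plan is to follow the classical inductive strategy of \cite[\S~4.1]{Lu94}, adapted to the covering setting. Since $\Theta_\nu \in \UU^-_\nu \otimes \UU^+_\nu$, the intertwining identity $\Delta(u)\Theta = \Theta\overline{\Delta}(u)$ on the generators $K_\mu$ and $J_\mu$ reduces to a weight-matching check using $\overline{K_\mu} = J_\mu K_{-\mu}$ and $\overline{J_\mu} = J_\mu$; this holds automatically from the weight grading of each summand. The essential constraints therefore come from $u = F_i$ and $u = E_i$ for $i \in I$.

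For $u = F_i$, one computes $\overline{\Delta}(F_i) = F_i \otimes \widetilde{J}_i\widetilde{K}_i + 1 \otimes F_i$ and separates the identity $\Delta(F_i)\Theta = \Theta\overline{\Delta}(F_i)$ into weight-graded components. Extracting the component in $\UU^-_\nu \otimes \UU^+_{\nu-i}$ yields an inductive equation expressing $[1 \otimes F_i, \Theta_\nu]$ in terms of the lower-height $\Theta_{\nu - i}$, which, together with $\Theta_0 = 1 \otimes 1$, determines $\Theta_\nu$ recursively on $\hgt(\nu)$. Uniqueness of a solution to the joint $F_i$- and $E_i$-relations then follows by examining $\Xi \in \UU^-_\nu \otimes \UU^+_\nu$ satisfying the homogeneous versions of both families: writing $\Xi = \sum_{b \in B_\nu} b^- \otimes y_b$ and applying \propref{prop:EF222} translates the relations into ${}_i r(y_b) = r_i(y_b) = 0$ for all $i$, so \lemref{lem:rivanishing} forces $\Xi = 0$. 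Existence of a family satisfying both sets of relations simultaneously is then established by directly verifying that the dual-basis formula in (b) works.

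For part (b), I would substitute the claimed formula into the $F_i$-recursion. \propref{prop:EF222}(a) rewrites $b^+ F_i - \pi_i^{p(b)} F_i b^+$ in terms of $r_i(b)^+$ and ${}_i r(b)^+$, and the adjunction relations \eqref{eq:derivadjunct} let one reorganize $\sum_{b \in B_\nu} b^- \otimes b^{*+}$ as a sum indexed by $B_{\nu - i}$ through the dual-basis change induced by left (resp.\ right) multiplication by $\theta_i$ on $\ff_{\nu - i}$. Matching scalars then pins down the prefactor $(-1)^{\hgt \nu}\pi^{e(\nu)}\pi_\nu q_\nu$ as the unique choice making the recursion self-consistent, and the parallel verification against the $E_i$-recursion follows symmetrically via \propref{prop:EF222}(b).

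The main obstacle will be bookkeeping the twisted parities and weights: the $\pi$- and $q$-factors introduced by \propref{prop:EF222}, together with the identities $e(\nu) - e(\nu - i) \equiv p(i)(p(\nu) - p(i)) \pmod 2$, $\pi_\nu / \pi_{\nu - i} = \pi_i$ and $q_\nu / q_{\nu - i} = q_i$, must be reconciled using the bar-consistency relation $\pi^{p(\nu)} = \pi^{\nu \cdot \nu / 2}$. Once these $\Z_2$-gradings are consistently tracked, the argument closely parallels \cite[Thm~4.1.2]{Lu94}, with the new $\pi$-dependent factors absorbed into the prefactor $\pi^{e(\nu)}\pi_\nu$.
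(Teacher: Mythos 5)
Your outline is correct and is essentially the argument given in the source the paper cites for this result (\cite[\S 3.1]{CHW13}, itself modelled on \cite[Theorem~4.1.2]{Lu94}); the paper offers no proof of its own, only the citation. The recursion-plus-dual-basis strategy, the use of Proposition~\ref{prop:EF222} together with Lemma~\ref{lem:rivanishing} for uniqueness, and the bookkeeping identities you list for $e(\nu)$, $\pi_\nu$ and $q_\nu$ are exactly what the cited proof does.
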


We will use $\Theta$ in the construction of the quasi-$\mathcal R$-matrix for $\UUi$ in \S\ref{subsec:Thetai}.

\subsection{$\A$-form and modified form of $\UU$}
\label{subsec:UAmodified}
For $i\in I$, let $\theta^{(m)}_{i}$ denote the divided power $ \theta^m_{i}/[m]_i^!$ for  $m \ge 0$. Let $_\A\ff$ be the $\A^\pi$-subalgebra of $\ff$ generated by all divided powers $\theta^{(m)}_{i}$ for $m \ge 0$ and $i \in \I$. Under the identification of $ \ff $ with $ \UU^- $ sending $ \theta_i \mapsto F_i$, $\UU_{-\mu}^-$ can be identified with the image of $\ff_\mu$. Similarly, we can identify $\ff \cong \UU^+$ via $\theta_{i}$ with $E_{i}$. We let $_\A\UU^-$ (respectively, $_\A\UU^+$) denote the image of $_\A\ff$ under this isomorphism, which is generated by all divided powers $F^{(m)}_{i}$ (respectively, $E^{(m)}_{i}$). 

Recall from \cite[Definition 4.2]{CFLW} that the modified quantum covering group $\dot{\UU}$ is a non-unital $\mathbb{Q}(q)^{\pi}$-algebra generated by the symbols $1_{\lambda}$ (idempotents), $ E_i1_{\lambda}$ and $F_i1_{\lambda}$, for $\lambda\in X$
and $i \in I$ and with relations:
\begin{eqnarray*}
	&1_{\lambda} 1_{\lambda'} =\delta_{\lambda, \lambda'} 1_{\lambda},
	\vspace{6pt}\label{eq:modified idemp rel}\\
	&   (E_i1_{ \lambda })  1_{\lambda'} =  \delta_{\lambda, \lambda'} E_i1_{ \lambda}, \quad
	1_{\lambda'}   (E_i1_{\lambda}) = \delta_{\lambda', \lambda+ i'} E_i1_{\lambda},
	\vspace{6pt}\label{eq:modified E rel}\\
	& (F_i1_{ \lambda})  1_{\lambda'}=  \delta_{\lambda, \lambda'} F_i1_{ \lambda}, \quad
	1_{\lambda'}  (F_i1_{\lambda})=  \delta_{\lambda', \lambda- i'} F_i1_{\lambda},
	\vspace{6pt}\label{eq:modified F rel}\\
	& (E_iF_j-\pi^{p(i)p(j)}F_jE_i)1_{\lambda}=\delta_{ij}\left[{\langle i, \lambda\rangle}\right]_{v_i,\pi_i}1_\lambda,
	\vspace{6pt}
	\label{eq:modified comm rel}\\
	& \sum_{n+n'=1-\ang{i,j'}}(-1)^{n'}\pi_i^{n'p(j)+\binom{n'}{2}} E_i^{(n)} E_jE_i^{(n')} 1_{\lambda}=0
	\;\; (i\neq j),  
	\vspace{6pt}
	\label{eq:modified E Serre}\\
	&\sum_{n+n'=1-\ang{i,j'}}(-1)^{n'}\pi_i^{n'p(j)+\binom{n'}{2}} F_i^{(n)}F_jF_i^{(n')} 1_{\lambda}=0
	\;\;  (i\neq j),  
	\label{eq:modified F Serre}
\end{eqnarray*}
where $i,j \in I$, $\la, \la'\in X$, and we use the notation $xy1_\lambda=(x1_{\lambda+|y|})(y1_\lambda)$
for $x,y\in \UU$. The modified quantum covering group $\dot{\UU}$ admits an ${\A}^{\pi}$-form, ${}_{{\A}}\dot{\UU}$ and so we can define ${}_{R}\dot{\UU} = R^{\pi} \otimes_{{\A}^{\pi}} {}_{{\A}}\dot{\UU}$.

\cite[Lemma~3.5]{Cl14} goes here/after the following section.

\subsection{Canonical basis and based $\UU$-modules}

\label{subsec:CBforQCG}

Here we recount some terminology and background on canonical basis and based $ \UU $-modules.

Let $M(\lambda)$ be the Verma module of $\UU$ with highest weight $\lambda\in X$ and with a highest weight vector denoted by $\eta_{\lambda}$.  
Define a lowest weight $\UU$-module $^\omega M(\lambda)$ with the same underlying vector space as $M(\lambda)$ but with the action twisted by the involution $\omega$ given in \cite[\S2,.2]{CHW13}. 
We will denote the lowest weight vector $\eta_{\lambda}$ in $^\omega M(\lambda)$ by $\xi_{-\lambda}$.
Let $X^+ = \{\lambda \in X \mid \langle i, \lambda \rangle \in {\N}, \forall i \in \I  \} $ be the set of dominant integral weights. 
By $\la \gg 0$ we shall mean that the integers $\langle i, \lambda \rangle$ for all $i$ are sufficiently large. The Verma module $M(\lambda)$ associated to $\la \in X$ has a unique simple quotient $\UU$-module, denoted by $L(\lambda)$. We shall abuse the notation and denote by $\eta_\lambda \in L(\lambda)$ the image of the highest weight vector $\eta_\lambda \in M(\lambda)$. Similarly we define the $\UU$-module $^\omega L(\lambda)$ of lowest weight $-\la$ with lowest weight vector $\xi_{-\la}$.  
For $\la \in X^+$, we let $_\A L(\la) ={_\A\UU^-}  \eta_\la$ and $^\omega _\A L(\la) ={_\A\UU^+ } \xi_{-\la}$ be the $\A$-submodules of $L(\la)$ and $^\omega L(\la)$, respectively. 

We recall now the canonical basis for the half-quantum group developed in \cite{CHW14}: Let $R$ be a ring. A {\em $\pi$-basis} for a free $R^\pi$-module $M$ is a set $S\subset M$ such that there exists an $R^\pi$-basis $B$ for $M$ with $S= B\cup \pi B$.
Note that in \cite{CHW14}, this is called a {\em maximal $\pi$-basis}.
We note that a $\pi$-basis of an $R^\pi$-module $M$ is an $R$-basis of $M$.
The fundamental result on $\pi$-bases in {\em loc. cit.} is the following.

\begin{prop}[\cite{CHW14}]
	\label{prop:qcgcb}
	There is a $\pi$-basis $\cB$ of $\ff$ with the following properties:
	\begin{enumerate}
		\item $\cB$ is a $\pi$-basis of $\ff$ over $\A$.
		\item Each $b\in \cB$ is homogeneous.
		\item $\bar{b}=b$ for all $b\in \cB$.
		\item For $\lambda\in X^+$, there is a subset $\cB(\lambda)$  such that
		$\cB(L(\lambda))=\{b\eta_\lambda:b\in \cB(\lambda)\}$ is a $\pi$-basis of $L(\lambda)$, and if $b\in \cB\setminus \cB(\lambda)$, 
		$b^- \eta_\lambda=0$.
		%\item $\cB$ spans a crystal lattice $\cL$ 
		%of $\UU^-$
		%with crystal basis $\cB+v\cL$ .
		%\item $\cB(L(\lambda))$  spans a crystal lattice $\cL(\lambda)$ of $L(\lambda)$
		%with crystal basis $\cB(L(\lambda))+v\cL(\lambda)$.
	\end{enumerate}
\end{prop}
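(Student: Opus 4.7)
The plan is to adapt the Lusztig--Kashiwara construction of canonical bases to the $\pi$-setting, working throughout with $\pi$-bases (elements come in pairs $\{b,\pi b\}$) rather than ordinary $\Q(q)^\pi$-bases. The core ingredients are the bar involution on $\ff$, a bar-stable $\A^\pi$-lattice, and a $\pi$-version of Lusztig's lemma; once these are in place, (1)--(3) follow formally, and (4) is obtained by descent.

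First, I would verify that the bar involution on $\ffpr$ defined in Section~\ref{subsec:bar+rmatrix} descends to $\ff$. For this one checks that the radical $\mathcal I$ of the bilinear form $(\cdot,\cdot)$ is bar-stable, which reduces to the identity $(\bar x,\bar y)=\overline{(x,y)}$ on homogeneous elements (the factors $(1-\pi_i q_i^{-2})^{-1}$ in \eqref{eq:bilinij} are bar-compatible in the sense that applying the bar involution just introduces controllable $\pi$-factors). Next, one shows the $\A^\pi$-lattice $_\A \ff$ generated by the divided powers $\theta_i^{(m)}$ is bar-stable: this is immediate since each $\theta_i^{(m)}$ is bar-fixed, and a short computation using \eqref{eq:barinvqi} shows that $(q,\pi)$-binomial relations among products of divided powers involve only coefficients in $\A^\pi$.

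Second, I would apply a $\pi$-analog of Lusztig's lemma to $_\A\ff$: given a distinguished spanning set of monomials in divided powers, each bar-invariant modulo $q^{-1}{_\A\ff}$, there exists a unique $\pi$-basis $\cB$ of $_\A\ff$ whose elements are bar-invariant and congruent to these monomials modulo $q^{-1}{_\A\ff}$. Homogeneity (property~(2)) is automatic since the lattice is weight-graded and both the bar involution and the lemma's construction respect the grading. Properties (1) and (3) are then immediate from the conclusion of the lemma.

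For (4), I would use the realization $L(\la) \cong \ff/J(\la)$ where $J(\la) = \sum_i \ff\cdot\theta_i^{\langle i,\la\rangle+1}$, together with the observation that $J(\la)$ is a bar-stable $\A^\pi$-submodule compatible with the lattice $_\A L(\la)$. The set $\cB(\la) := \{b\in\cB : b\notin J(\la)\}$ then descends to a $\pi$-basis of $L(\la)$ via $b\mapsto b^-\eta_\la$, while elements in $J(\la)$ annihilate $\eta_\la$ by construction. The main obstacle is the careful bookkeeping of $\pi$-signs throughout: verifying that Lusztig's lemma genuinely produces a $\pi$-basis (not just an $R$-basis) requires controlling how the bar involution interacts with the $\pi$-action on the lattice, and showing that the descent to $L(\la)$ does not collapse any pair $\{b,\pi b\}$ — this ultimately rests on nondegeneracy of the induced bilinear form on $L(\la)$ and a Kashiwara-operator analysis adapted to the covering setting, which is where the work of \cite{CHW14} is concentrated.
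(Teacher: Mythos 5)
The paper offers no proof of this proposition at all: it is quoted verbatim from \cite{CHW14} (Clark--Hill--Wang, \emph{Quantum supergroups II: Canonical basis}), so there is no in-paper argument to compare yours against. Judged on its own terms, your sketch identifies the right formal ingredients (bar involution on $\ff$, bar-stable $\A^\pi$-lattice, a triangularity lemma, descent to $L(\la)$), but it has a genuine gap at its core. The step ``given a distinguished spanning set of monomials in divided powers, each bar-invariant modulo $q^{-1}{}_\A\ff$, apply a $\pi$-version of Lusztig's lemma'' assumes away the entire difficulty. For a general (super) Kac--Moody datum there is no known elementary way to produce from monomials in divided powers a family indexed by a partially ordered set with the upper-triangular bar action and $\A^\pi$-spanning properties that Lusztig's lemma needs; monomials are exactly bar-fixed but are far from a basis, and selecting a basis among them with the required triangularity is precisely the unsolved part of your outline. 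This is why \cite{CHW14} does not argue this way: it follows Kashiwara's route, defining Kashiwara operators $\tilde e_i,\tilde f_i$ on $\ff$ and on each $L(\la)$, constructing crystal lattices $\mathcal L(\infty)$ and $\mathcal L(\la)$, running the grand loop induction, and only then globalizing via the bar involution and the $\A^\pi$-form to obtain $\cB$. The $\pi$-basis phenomenon (elements occurring in pairs $\{b,\pi b\}$, with no canonical choice of sign) also originates in that crystal-theoretic step, not in a sign bookkeeping exercise appended to Lusztig's lemma.

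The same gap resurfaces in your treatment of (4). The presentation $L(\la)\cong \ff/J(\la)$ with $J(\la)=\sum_i \ff\,\theta_i^{\langle i,\la\rangle+1}$ is correct, but defining $\cB(\la)$ as the set of $b\in\cB$ not lying in $J(\la)$ and concluding that it descends to a $\pi$-basis of $L(\la)$ requires knowing that $\cB\cap J(\la)$ is a $\pi$-basis of $J(\la)$, i.e., that the canonical basis of $\ff$ is compatible with the submodule $J(\la)$. In the non-super setting this is a substantial theorem (Lusztig's compatibility of $\B$ with $\ker(\ff\to L(\la))$, equivalently Kashiwara's embedding $\mathcal B(\la)\hookrightarrow\mathcal B(\infty)$), and in \cite{CHW14} it is again an output of the grand loop argument, not an observation one can make after the fact. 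You do flag that the real work lies in a ``Kashiwara-operator analysis,'' which is honest, but as written the proposal inverts the logical order: the crystal-basis analysis is not a final sign check, it is the mechanism that produces the distinguished $q\to\infty$ basis and the module compatibility on which everything else in your outline depends.
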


We note that $\cB|_{\pi=1}\subset\ff|_{\pi=1}$ is precisely the Lusztig-Kashiwara canonical basis. 
%see also: \S14.2 of \cite{Lu} for Lusztig's signed basis for $\ff$

Thus, there is a canonical basis $\{b^+|b\in \B\}$ on $\UU^+$, and a canonical basis $\{b^-|b\in \B\}$ on $\UU^-$. For each $\la\in X^+$, there is a subset $\B(\la)$ of $\B$ so that $\{b^- \eta_\la |b\in \B(\la)\}$ (respectively, $\{b^+\xi_{-\la} |b\in \B(\la)\}$)  forms a canonical basis of $L(\la)$ (respectively, $^\omega L(\la)$).  For any Weyl group element $w\in W$, let $\eta_{w\la}$ denote the unique canonical basis element of weight $w\la$. 

Let $\UUdot $ be the idempotented modified quantum group and ${}_\A \UUdot$ its $\A$-form. Then the sets $\{b^+1_\lambda b'^-: (b,b')\in\cB\times_\pi \cB\}$ and
$\{b^-1_\lambda b'^+: (b,b')\in\cB\times_\pi \cB\}$ both form a $\pi$-basis
of $_\A \UUdot$ (cf. \cite[Lemma~3.5]{Cl14}) and $\UUdot$ admits a canonical basis $\dot{\B}  = \{ b \diamondsuit_{\zeta} b'  \vert (b, b') \in \B \times \B, \zeta \in X \}$ (cf. \cite[Corollary~4.15]{Cl14}).

%%
%\subsection{Based $\UU$-modules}

Recall the notion of based modules for finite type quantum groups \cite[chapter~27]{Lu94}, and generalized to quantum groups of Kac-Moody type in \cite{BW16}, which is a module with a distinguished basis and compatible bar-involution. Like many results for quantum groups, these generalize to the quantum covering setting, see \cite[\S4]{Cl14} and also section~\ref{sec:iCBmodules} where we define based modules for $\UUi$. Examples of based $\UU$-modules include $L(\la)$ and $ ^\omega L(\la) $ with their $\pi$-basis $\B(\la) $. We go through a few relevant results here:

\begin{prop}
	\label{prop:CBtensor}
	%Assume that the root system is $Y$-regular. 
	Let $(M,B), (M',B')$ be based modules, with either $M = ^\omega L(\la)$ or $M'=L(\la)$ for $\la \in X^+$. 
	Let $\mc L$ be the $\Z^\pi[q^{-1}]$-submodule of $M\otimes M'$ generated by $B\otimes B'$.
	
	\begin{enumerate}
		\item
		For any $(b, b')\in B\times B'$, there is a unique element $b\diamondsuit b'\in \mc L$ such that 
		$\Psi(b\diamondsuit b') =b\diamondsuit b'$ and $b\diamondsuit b' -b\otimes b' \in q^{-1} \mc L$.
		
		\item
		The element $b\diamondsuit b'$ is equal to $b\otimes b'$ plus a $q^{-1} \Z^\pi[q^{-1}]$-linear combination of
		elements $b_2\otimes b_2'$ with $(b_2,b_2') \in B\times B'$ with $(b_2,b_2') <(b,b')$. 
		
		\item
		The elements $b\diamondsuit b'$ with $(b,b') \in B\times B'$ form a $\Q(q)^\pi$-basis of $M\otimes M'$,
		an $\A^\pi$-basis of $\A^\pi \otimes_{\Z^\pi[q^{-1}]} \mc L$, and a $\Z^\pi[q^{-1}]$-basis of $\mc L$. 
	\end{enumerate}
\end{prop}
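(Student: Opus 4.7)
The plan is to follow the standard Lusztig approach to canonical bases on tensor products (cf.\ \cite[Ch.~27]{Lu94}), adapted to the quantum covering setting as in \cite[\S4]{Cl14}. The quasi-$\mc R$-matrix $\Theta$ from \propref{prop:rmatrix} will provide the interwining device between the naive bar involutions on $M$ and $M'$. Define the bar involution $\Psi$ on $M\otimes M'$ by
\begin{equation*}
\Psi(m\otimes m') = \Theta\cdot(\ov{m}\otimes \ov{m'}),
\end{equation*}
which is well-defined because the hypothesis that either $M={}^\omega L(\la)$ or $M'=L(\la)$ guarantees that the (a priori infinite) sum $\sum_\nu \Theta_\nu$ acts as a \emph{finite} sum on any given tensor: in the first case $\UU^-_\nu$ annihilates all but finitely many weight spaces of ${}^\omega L(\la)$, and in the second case $\UU^+_\nu$ annihilates all but finitely many weight spaces of $L(\la)$. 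Standard manipulations using the intertwining identity $\Delta(u)\Theta=\Theta\ov\Delta(u)$ then give $\Psi^2=\id$ and $\Psi(ux) = \ov{u}\Psi(x)$ for $u\in\UU$, $x\in M\otimes M'$.

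\textbf{Integrality and unitriangularity.} The next step is to show that $\Psi$ preserves the lattice $\mc L$ generated by $B\otimes B'$ over $\Z^\pi[q^{-1}]$, and moreover is unitriangular with respect to a partial order on $B\times B'$. Put a partial order on $B\times B'$ by declaring $(b_2,b_2')<(b,b')$ iff $|b_2'|-|b'|\in\N[\I]\setminus\{0\}$ (with $|b|+|b'|=|b_2|+|b_2'|$ fixed since $\Theta_\nu$ has matching weights). Because $\ov{b}=b$ and $\ov{b'}=b'$ (property (3) of \propref{prop:qcgcb}), we have
\begin{equation*}
\Psi(b\otimes b') = b\otimes b' + \sum_{\nu>0}\Theta_\nu\cdot(b\otimes b').
\end{equation*}
Using the explicit form $\Theta_\nu = (-1)^{\hgt\nu}\pi^{e(\nu)}\pi_\nu q_\nu\sum_{c\in\cB_\nu} c^-\otimes c^{*+}$ from \propref{prop:rmatrix}, the sum over $\nu>0$ lies in $q^{-1}\mc L$ by the factor $q_\nu$, provided the action of $c^-\otimes c^{*+}$ on $b\otimes b'$ is $\A$-integral and compatible with the based structures. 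Integrality follows because $\cB$ and its dual are related to the integral form ${}_{\A}\ff$, so the action lands in $\mc L$; the $q^{-1}$-smallness comes from $q_\nu\in q^{-1}\Z[q^{-1}]$ for $\nu\neq 0$ together with the fact that the lower-order terms in the action of $c^{*+}$ on $b'$ (resp.\ $c^-$ on $b$) are still expressible in $B'$ (resp.\ $B$) with coefficients in $\A^\pi$.

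\textbf{Application of Lusztig's lemma.} With $\Psi$ shown to be an involution preserving $\mc L$ and acting unitriangularly modulo $q^{-1}\mc L$ with respect to the above partial order, the standard uniqueness/existence lemma (Lusztig's lemma, whose proof is purely formal and generalizes verbatim from \cite[24.2.1]{Lu94} to the $\pi$-setting, cf.\ \cite[\S4]{Cl14}) produces, for each $(b,b')\in B\times B'$, a unique bar-invariant element $b\diamondsuit b'\in\mc L$ with $b\diamondsuit b'\equiv b\otimes b'\pmod{q^{-1}\mc L}$, and gives (1) and (2) simultaneously. For (3), the unitriangular change-of-basis matrix from $\{b\otimes b'\}$ to $\{b\diamondsuit b'\}$ is invertible over $\Z^\pi[q^{-1}]$, so $\{b\diamondsuit b'\}$ is a $\Z^\pi[q^{-1}]$-basis of $\mc L$, an $\A^\pi$-basis of $\A^\pi\otimes_{\Z^\pi[q^{-1}]}\mc L$, and a $\Q(q)^\pi$-basis of $M\otimes M'$.

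\textbf{Main obstacle.} The hardest step is verifying the integrality statement $\Theta\cdot\mc L\subseteq\mc L$ together with the $q^{-1}$-smallness of all $\nu>0$ contributions. In the non-super setting this is the main technical content of Lusztig's construction and requires compatibility between the canonical basis of $\ff$ and its dual under $(\cdot,\cdot)$, together with integrality of the action of ${}_{\A}\ff$ on $_\A L(\la)$ and ${}^\omega{}_\A L(\la)$. In the covering setting the additional bookkeeping of the $\pi$-grading and the $\pi$-bases must be tracked carefully, but no new phenomena arise beyond those already handled in \cite{CHW14,Cl14}, so the proof reduces to invoking those results and the propositions recalled above.
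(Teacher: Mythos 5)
Your overall strategy --- define $\Psi=\Theta\circ(\ov{\phantom{x}}\otimes\ov{\phantom{x}})$, check it is a well-defined involution compatible with the $\UU$-action (with finiteness coming from the hypothesis that one factor is a highest or lowest weight module), establish integrality and unitriangularity, and invoke Lusztig's Lemma 24.2.1 --- is exactly the route the paper takes; its proof is a citation of \cite[Theorem~2.7]{BW16} and \cite[Corollary~4.2]{Cl14}, both of which run this argument.

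However, your ``Integrality and unitriangularity'' step contains a concrete error. You justify the $q^{-1}$-smallness of the $\nu>0$ contributions by asserting ``$q_\nu\in q^{-1}\Z[q^{-1}]$ for $\nu\neq 0$''; this is false, since $q_\nu=\prod_i q_i^{\nu_i}=q^{\sum_i d_i\nu_i}$ is a \emph{positive} power of $q$ for $\nu\in\N[\I]\setminus\{0\}$. Moreover, the claim that the antilinear map $\Psi$ ``preserves the lattice $\mc L$ over $\Z^\pi[q^{-1}]$'' cannot hold: $\Psi(q^{-n}v)=(\pi q)^{n}\Psi(v)$, so no nonzero antilinear involution preserves a $\Z^\pi[q^{-1}]$-lattice --- only the bar-stable $\A^\pi$-lattice ${}_\A M\otimes_{\A^\pi}{}_\A M'$ can be preserved. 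Neither claim is actually needed: Lusztig's lemma takes as input only (i) preservation of the $\A^\pi$-lattice, (ii) unitriangularity with coefficients in $\A^\pi$ and finitely many lower terms, and (iii) $\Psi^2=\id$; the $q^{-1}\Z^\pi[q^{-1}]$ bound on the off-diagonal coefficients of $b\diamondsuit b'$ is the \emph{output} of the lemma, not an input. Finally, the $\A^\pi$-integrality of the action of $\Theta$ should not be deduced from integrality of the dual basis elements $c^{*}$ (which in general do not lie in ${}_\A\ff$), but rather by the surjectivity argument used elsewhere in the paper: \lemref{lem:2.2-BW16} together with the identity $u\,\Theta(m\otimes m')=\Theta\bigl(\overline{\bar u(\bar m\otimes\bar m')}\bigr)$, exactly as in the proof of \propref{prop:quasi-KZ}. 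With these corrections the argument closes and agrees with the cited references.
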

 
\begin{proof}
	The argument here is a direct generalization of \cite[Theorem~2.7]{BW16},  using the quasi-$\mc R$-matrix $ \Theta $ from \S\ref{subsec:bar+rmatrix} above and a similar construction to \cite[Corollary~4.2]{Cl14}.
\end{proof}

By applying this iteratively we have generalization of \cite[Proposition~2.9]{BW16} and direct generalizations of constructions in \cite[\S27]{Lu94} to the quantum covering setting leads to the quantum covering analogue of \cite[Prop~2.11]{BW16}:
%This follows from Remark 2.11 of loc. cit., and direct generalizations of versions of Lu 27.1.7-8 and 27.2.1-2 to the quantum covering setting (omit these details, nothing particularly novel is introduce here). 

\begin{prop}
	Let $\la_1, \ldots, \la_\ell \in X^+$. Let $\eta_i$ denote the highest weight vector of $L(\la_i)$ for each $i$ and let $\eta$ denote the highest weight vector of $L(\sum_{i=1}^\ell \la_i)$.
	Then the (unique) homomorphism of $\UU$-modules 
	$$
	\chi: L\Big(\sum_{i=1}^\ell \la_i \Big) \longrightarrow L(\la_1) \otimes \ldots \otimes L(\la_\ell),
	\qquad \chi(\eta) =\eta_1\otimes \ldots \otimes \eta_\ell
	$$ 
	sends each canonical basis element to a canonical basis element.  
\end{prop}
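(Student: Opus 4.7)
My plan is to prove this by induction on $\ell$, reducing to the case $\ell=2$, then applying the uniqueness in \propref{prop:CBtensor}. For the inductive step, assuming the result holds for $\ell-1$ tensor factors, one factors $\chi$ as the composition
$$L\Big(\sum_i \lambda_i\Big) \xrightarrow{\chi'} L(\lambda_1) \otimes L\Big(\sum_{i\ge 2}\lambda_i\Big) \xrightarrow{\id \otimes \chi''} L(\lambda_1) \otimes \cdots \otimes L(\lambda_\ell),$$
where $\chi'$ is the $\ell=2$ case and $\chi''$ is the inductive case; associativity of the $\diamondsuit$ product under iteration of \propref{prop:CBtensor} (a direct generalization of \cite[\S27]{Lu94}) ensures the composite sends canonical basis to canonical basis. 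For the base case $\ell=2$, write $\lambda=\lambda_1$, $\mu=\lambda_2$. I would show
$$\chi(b^- \eta_{\lambda+\mu}) = (b^- \eta_\lambda) \diamondsuit \eta_\mu, \qquad b\in \B(\lambda+\mu),$$
by verifying that the left-hand side satisfies the three defining properties of the right-hand side from \propref{prop:CBtensor}(1): membership in the lattice $\mc L$ generated by $\B(\lambda) \otimes \B(\mu)$, invariance under the bar involution $\Psi$ on $L(\lambda)\otimes L(\mu)$, and agreement with $b^-\eta_\lambda \otimes \eta_\mu$ modulo $q^{-1}\mc L$.

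For $\Psi$-invariance, first observe $\Psi(\eta_\lambda \otimes \eta_\mu) = \eta_\lambda \otimes \eta_\mu$: the components $\Theta_\nu$ for $\nu > 0$ lie in $\UU^-_\nu \otimes \UU^+_\nu$ and annihilate $\eta_\lambda \otimes \eta_\mu$ since $\UU^+_\nu \eta_\mu = 0$, while $\Theta_0 = 1 \otimes 1$ and $\ov{\eta_\lambda} \otimes \ov{\eta_\mu} = \eta_\lambda \otimes \eta_\mu$. Since $\chi$ is a $\UU$-module homomorphism with $\chi(\eta_{\lambda+\mu}) = \eta_\lambda \otimes \eta_\mu$ and $\Psi$ satisfies the intertwining property $\Psi(u z) = \ov{u}\, \Psi(z)$ (a direct consequence of $\Delta(u)\Theta = \Theta\ov{\Delta}(u)$ from \propref{prop:rmatrix}), the identity $\Psi \circ \chi = \chi \circ \ov{\phantom{x}}$ follows, so $\chi(b^- \eta_{\lambda+\mu})$ is $\Psi$-invariant. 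For integrality and the leading term, expand $\chi(b^- \eta_{\lambda+\mu}) = \Delta(b^-)(\eta_\lambda \otimes \eta_\mu)$ using the iterated coproduct $\Delta(F_i) = F_i \otimes \tK_{-i} + 1 \otimes F_i$; each factor $F_i$ either acts on the first tensor factor (producing a $\tK_{-i}$ that scales $\eta_\mu$) or on the second. The summand where every $F$ acts on the first factor is proportional to $(b^-\eta_\lambda) \otimes \eta_\mu$, and all other summands have strictly higher weight in the first factor. Integrality $\chi(b^-\eta_{\lambda+\mu}) \in \mc L$ follows from $b^- \in {}_\A\UU^-$ together with the $\A^\pi$-integrality of $\Delta$.

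The main obstacle will be the last piece of the leading-term analysis: verifying that the $(q,\pi)$-scalar multiplying $(b^-\eta_\lambda) \otimes \eta_\mu$ reduces to $1$ modulo $q^{-1}$, and that all other mixed terms in the expansion lie in $q^{-1}\mc L$. This requires careful tracking of the powers of $q$ and $\pi$ produced by pushing each $\tK_{-i}$ through to act on $\eta_\mu$ (contributing factors $q^{-\langle i, \mu\rangle}$ together with appropriate $\pi$-signs) and by the super-graded commutation rules in the tensor product. These adjustments parallel those in \cite[\S4]{Cl14} for based $\UU$-modules in the covering setting and follow the same pattern as the $\pi$-refinement of \cite[Prop~2.11]{BW16}. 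Once completed, the uniqueness in \propref{prop:CBtensor}(1) identifies $\chi(b^- \eta_{\lambda+\mu})$ with $(b^-\eta_\lambda) \diamondsuit \eta_\mu$, a canonical basis element, completing the argument.
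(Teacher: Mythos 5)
The parts of your base case that work are the bar-invariance and integrality: $\Psi\circ\chi=\chi\circ\psi$ does follow from $\Delta(u)\Theta=\Theta\ov{\Delta}(u)$ together with $\Theta_\nu(\eta_\lambda\otimes\eta_\mu)=0$ for $\nu\neq 0$, and $\chi(b^-\eta_{\lambda+\mu})\in\mc L$ follows from the integrality of the coproduct. The genuine gap is the leading-term analysis, and it is not a matter of "careful tracking of powers" — the identification itself is wrong. With the paper's convention $\Delta(F_i)=F_i\otimes\tK_{-i}+1\otimes F_i$, the summand of $\Delta(b^-)(\eta_\lambda\otimes\eta_\mu)$ in which every $F$ acts on the first factor carries the coefficient $q^{-\sum_j d_{i_j}\langle i_j,\mu\rangle}$, which is a \emph{strictly negative} power of $q$ whenever some $\langle i_j,\mu\rangle>0$; so that summand lies in $q^{-1}\mc L$ and cannot be the leading term, and consequently $\chi(b^-\eta_{\lambda+\mu})\neq (b^-\eta_\lambda)\diamondsuit\eta_\mu$ in general. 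Already in rank one with $\lambda=\mu$ the fundamental weight one has $\chi(F\eta_{\lambda+\mu})=q^{-\langle i,\mu\rangle}F\eta_\lambda\otimes\eta_\mu+\eta_\lambda\otimes F\eta_\mu=\eta_\lambda\diamondsuit(F\eta_\mu)$, whereas $(F\eta_\lambda)\diamondsuit\eta_\mu=F\eta_\lambda\otimes\eta_\mu$ does not even lie in the image of $\chi$.

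What the argument actually requires is to show that $\chi(b^-\eta_{\lambda+\mu})$ reduces modulo $q^{-1}\mc L$ to a \emph{single} vector $b_1\otimes b_2$ with $(b_1,b_2)\in\B(\la)\times\B(\mu)$ (rather than to a sum of several), after which uniqueness in \propref{prop:CBtensor}(1) gives $\chi(b^-\eta_{\lambda+\mu})=b_1\diamondsuit b_2$. The pair $(b_1,b_2)$ is dictated by the tensor product rule for crystals and is generally neither $(b^-\eta_\lambda,\eta_\mu)$ nor $(\eta_\lambda,b^-\eta_\mu)$; establishing this reduction is precisely the crystal-lattice argument of \cite[27.3.2--27.3.6]{Lu94}, transported to the covering setting via the crystal-basis results underlying \cite{CHW14}. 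This is the essential content that your sketch replaces with an incorrect identification, and the same machinery is what justifies the "associativity of the $\diamondsuit$ product" you invoke in the inductive step. (For comparison, the paper itself supplies no proof here beyond citing the iterated use of \propref{prop:CBtensor} and \cite[\S27]{Lu94}, so it is exactly this omitted step that a complete proof must address.)
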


For $ \lambda, \mu \in X^+ $  we define the $ \UU $-submodule $ L(\lambda, \mu) := \UU(\eta_{\lambda} \otimes \eta_\mu) \subset L(\lambda) \otimes L(\mu) $.

\begin{prop}
	\label{thm:basedsub}
	Let $ \lambda, \mu \in X^+ $ and $ w \in W $. Then, the $ \UU $-submodule $ L(\lambda,\mu) $ is a based $\UU$-submodule of $ L(\lambda) \otimes L(\mu) $.
\end{prop}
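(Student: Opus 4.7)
The plan is to deduce this directly from the preceding proposition on the canonical-basis-preserving homomorphism $\chi$, specialized to the case $\ell = 2$, $\lambda_1 = \lambda$, $\lambda_2 = \mu$. That proposition yields a $\UU$-module map
\[
\chi: L(\lambda + \mu) \longrightarrow L(\lambda) \otimes L(\mu), \qquad \eta \longmapsto \eta_\lambda \otimes \eta_\mu,
\]
which sends each element of the canonical basis $\{b\eta : b \in \B(\lambda + \mu)\}$ to a canonical basis element of the tensor product, i.e. to some $b_1 \diamondsuit b_2$ with $(b_1, b_2) \in \B(\lambda) \times \B(\mu)$.

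First, I would identify the image. Since $L(\lambda + \mu)$ is cyclic over $\UU$ on $\eta$ and $\chi(\eta) = \eta_\lambda \otimes \eta_\mu$, the image equals $\UU(\eta_\lambda \otimes \eta_\mu) = L(\lambda, \mu)$. Next, I would verify that $\chi$ is injective: the canonical basis elements $\{b_1 \diamondsuit b_2\}$ of $L(\lambda) \otimes L(\mu)$ from Proposition~\ref{prop:CBtensor} are $\Q(q)^\pi$-linearly independent, and $\chi$ sends the basis $\{b\eta\}$ into this set, so no $\Q(q)^\pi$-linear combination of the $b\eta$ can lie in $\ker \chi$ unless all coefficients vanish (up to the $\pi$-ambiguity inherent in a $\pi$-basis, which we track via Proposition~\ref{prop:qcgcb}). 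Consequently, $\chi$ identifies $L(\lambda + \mu)$ with its image $L(\lambda, \mu)$, and $\chi(\B(\lambda + \mu))$ is a subset of the canonical basis $\B(\lambda) \diamondsuit \B(\mu)$ that forms a $\pi$-basis of $L(\lambda, \mu)$.

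Finally, I would verify bar-invariance of $L(\lambda, \mu)$: the element $\eta_\lambda \otimes \eta_\mu$ is a canonical basis element of $L(\lambda) \otimes L(\mu)$ (it is $\eta_\lambda \diamondsuit \eta_\mu$, the unique highest-weight basis element of weight $\lambda + \mu$) and hence is fixed by the bar-involution on the tensor product; since the Chevalley generators $E_i, F_i$ of $\UU$ are bar-invariant, it follows that $L(\lambda, \mu) = \UU \cdot (\eta_\lambda \otimes \eta_\mu)$ is stable under the bar-involution. This confirms both defining properties of a based $\UU$-submodule in the sense of \S\ref{subsec:CBforQCG} (namely, that $L(\lambda, \mu) \cap \B(\lambda) \diamondsuit \B(\mu)$ is a $\pi$-basis of $L(\lambda, \mu)$, and that $L(\lambda, \mu)$ is bar-stable).

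The step I expect to require the most care is the $\pi$-basis bookkeeping: because canonical bases in the quantum covering setting are only well-defined up to the sign $\pi$, one must track the $\pi$-factor that arises when $\chi(b\eta)$ is identified with a particular $b_1 \diamondsuit b_2$, to ensure that $\chi(\B(\lambda + \mu))$ really is a $\pi$-subset of $\B(\lambda) \diamondsuit \B(\mu)$ rather than merely a set of canonical basis vectors up to sign. This is routine using the same techniques adapted from \cite[Prop.~2.11]{BW16} and \cite[\S4]{Cl14}, but it is the only place where the covering theory genuinely differs from the $\pi = 1$ argument.
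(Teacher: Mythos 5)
Your proof is correct and follows essentially the same route as the paper's: both realize $L(\lambda,\mu)$ as the image of the based-module homomorphism $\chi$ of the preceding proposition applied to a cyclic based module, so that its canonical basis is carried onto a subset of $\B(\lambda)\diamondsuit\B(\mu)$ spanning $L(\lambda,\mu)$. The paper's version additionally writes $\lambda=\lambda_1-\nu$ and tensors with ${}^\omega L(\nu)$ — a detour only needed for the extremal-weight variant $L(w\lambda,\mu)$ suggested by the unused $w\in W$ in the statement — so for dominant $\lambda$ your direct argument is the $\nu=0$ specialization of the paper's.
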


\begin{proof}
	Write $ \lambda = \lambda_1 - \nu $. From the results above, $ L(\lambda_1) \otimes L(\mu) $ is a based $ \UU $-module, and the map  $ \chi : L(\lambda_1 + \mu) \to L(\lambda_1) \otimes L(\mu) $ is a based $ \UU $-module homomorphism, and so $ \chi' := id_{{}^\omega L(\lambda)} \otimes \chi $ is a based module homomorphism. Similarly, the map $ \phi : {}^\omega L(\nu) \otimes L(\lambda_1) \to L(\lambda) $ is a based module homomorphism, and hence so is $ \phi' := \phi \otimes \id_{L(\mu)} $. Thus, the composition homomorphism $ \phi' \chi': {}^\omega L(\nu) \otimes L(\lambda_1 + \mu) \to L(\lambda) \otimes L(\mu) $ sending $ \xi_\nu \otimes \eta_{\lambda_1 + \mu} \mapsto \eta_{\lambda} \otimes \eta_\mu $ is a based $ \UU $-module homomorphism. Since $ {}^\omega L(\nu) \otimes L(\lambda_1 + \mu) $ is cyclically generated by $\eta_{\lambda} \otimes \eta_\mu$, the $ \UU $-module $ L(\lambda, \mu) $ is the image of the based module homomorphism $ \phi' \chi' $, and so $ L(\lambda,\mu) $ is a based $ \UU $-submodule of $ L(\lambda) \otimes L(\mu) $.
\end{proof}

\section{The $\imath$quantum covering groups $\UUi$}
 \label{sec:Ui}
  
  %todo{review notation}
  
  We begin with a definition (cf. \cite[Definition~2.2]{C19}):
  
  \begin{definition}
  	\label{def:UUi}
  	The {\em quasi-split $\imath${}quantum covering group}, denoted by $\UUi_{\vs}$ or  just $\UUi$, is the $\K(q)^\pi$-subalgebra of $\UU$ generated by
  	\begin{align}
  	B_i :=F_{i}  &+ \vs_i E_{\tau i} \tK^{-1}_i, \qquad \tJ_i \, \,(i \in I),
  	\qquad K_{\mu}\, \,(\mu \in Y^{\imath}).
  	\label{eq:def:ff}
  	\end{align}
  	Here the parameters
  	\begin{equation}
  	\label{parameters}
  	\vs=(\vs_i)_{i\in I}\in ((\K(q)^\pi)^\times)^I,\qquad
  	% 	\kappa=(\kappa_i)_{i\in I}\in\K(q)^I
  	\end{equation}
  	are assumed to satisfy Conditions \eqref{bar1}--\eqref{bar3} below:
  	\begin{align}
  	\ov{\vs_iq_i} &= \vs_iq_i \text{ if } \tau i=i \text{ and } a_{ij}\neq 0 \text{ for some } j\in I\setminus\{i\}; \label{bar1} \\
  	\ov{\vs_i} &= \vs_i = \vs_{\tau i}, \text{ if } \tau i\neq i \text{ and } a_{i,\tau i}=0.	
  	\label{bar2} \\
  	\vs_{\tau i} &= \pi_i q_i^{-a_{i,\tau i}} \overline{\vs_i} \text{ if } \tau i\neq i \text{ and } a_{i,\tau i} \neq 0.   
  	\label{bar3}
  	\end{align}
  \end{definition}
  
  The $\imath${}quantum covering group is a (right) coideal subalgebra of $\UU$, since under the comultiplication $ \Delta : \UUi \to \UUi \otimes \UU $. We will occasionally denote the embedding by  $\imath : \UUi \hookrightarrow \UU$; the $\imath$ in the name  and superscript originates from this convention. The conditions on the parameters ensure that $\UUi$ admits a suitable bar-involution (see \S\ref{subsec:bar}).
  
  \subsection{The $\imath^\pi$ divided powers}
  \label{subsec:piDP}
 
 Let $ \UUi = \UUi_\vs $ be an $\imath{}$quantum group with parameter $ \vs $, for a given root datum $ (Y,X,\ang{\cdot,\cdot},\ldots )$.
 
 \begin{definition}
 	For  $i\in I$ with $\tau i\neq i$, imitating Lusztig's divided powers,  we define the {\em divided power} of $B_i$ to be
 	\begin{align}
 		\label{eq:iDP1}
 		B_i^{(m)}:=B_i^{m}/[m]_{i}^!, \quad \forall m\ge 0, \qquad \text{when } i \neq \tau i.
 	\end{align}
 	% Observe that $B_i^{(m)}$ lies in ${}_\A \U$ thanks to the $q,\pi$-binomial formula. \red{Here $_\A \U$ is the $\A$-subalgebra of $\U$ generated by the elements $E_i^{(m)}$, $F_i^{(m)}$ for various $i\in I$ and $m\in\Z$ and by the elements $K_\mu$ for $\mu\in Y$.}
 	
 	For $ i \in I $ with $ \tau i = i $, the {\em $\imath^\pi$-divided powers} are defined to be
 	\begin{eqnarray}
 		&& B_{i,\odd}^{(m)}=\frac{1}{[m]_{i}^!}\left\{ \begin{array}{ccccc} B_i\prod_{j=1}^k (B_i^2-\vs_i \pi_i q_i [2j-1]_{i}^2 \tJ_i ) & \text{if }m=2k+1,\\
 			\prod_{j=1}^k (B_i^2- \vs_i \pi_i q_i [2j-1]_{i}^2 \tJ_i ) &\text{if }m=2k; \end{array}\right.
 		\label{eq:piDPodd}\\
 		&& B_{i,\even}^{(m)}= \frac{1}{[m]_{i}^!}\left\{ \begin{array}{ccccc} B_i\prod_{j=1}^k (B_i^2- \vs_i q_i  [2j]_{i}^2 \tJ_i ) & \text{if }m=2k+1,\\
 			\prod_{j=1}^{k} (B_i^2- \vs_i q_i  [2j-2]_{i}^2 \tJ_i ) &\text{if }m=2k. \end{array}\right.
 		\label{eq:piDPev}
 	\end{eqnarray}
 \end{definition}
 
 When we specialize $ \pi_i = 1 $ and $ \tJ_i = 1 $, we obtain the $\imath$-divided powers in \cite{CLW18} from the formulas above.. These $ \imath^\pi $-divided powers satisfy closed form expansion formulas when written in terms of the PBW basis for $ \UU $ (see \cite[\S3.3--3.7]{C19}), which enables the formulation of a Serre presentation for $ \UUi $ in \cite[Theorem~4.2]{C19}, generalizing \cite[Theorem~3.1]{CLW18}.

  \subsection{Bar involution on $\UUi$}
  \label{subsec:bar} 
  One application of the Serre presentation for $ \UUi $ is that it enables us to establish the existence of the bar involution for the quasi-split $\imath${}quantum group $\UUi$ in \cite[Prop~4.10]{C19}:
  
  \begin{prop}
  	\label{prop:bar}
  	Assume the parameters $\vs_i$, for $i\in I$, satisfy the conditions \eqref{bar1}--\eqref{bar3} above. 
  		Then there exists a $\K$-algebra automorphism $ ^{\ov{\,\,\,\,\,}\imath}: \UUi\rightarrow \UUi$ (called a bar involution on $\UUi$) such that
  	\[
  	\ov{q}^\imath=\pi q^{-1}, \quad
  	% i.e. \ov{q}^\imath=\ov{q}
  	\ov{B_i}^\imath=B_i,  \quad
  	\ov{\tJ_i}^\imath=\tJ_i,  \quad
  	\ov{K_\mu}^\imath=J_\mu K_{\mu}^{-1}, \quad
  	\forall \mu\in Y^\imath, i\in I.
  	\]
  \end{prop}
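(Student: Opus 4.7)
The plan is to use the Serre presentation of $\UUi$ from \cite[Theorem~4.2]{C19} (generalizing \cite[Theorem~3.1]{CLW18}), which realizes $\UUi$ as the $\K(q)^\pi$-algebra on generators $B_i$ ($i\in I$), $\tJ_i$ ($i\in I$), $K_\mu$ ($\mu\in Y^\imath$), modulo a finite list of defining relations: the torus identities among the $K_\mu,\tJ_i$; their commutation relations with each $B_i$; the $a_{ij}=0$ (anti-)commutation relations between $B_i$ and $B_j$; and the $\imath$-Serre relations, which are phrased in terms of the $\imath^\pi$-divided powers $B_{i,\odd}^{(m)}$ and $B_{i,\even}^{(m)}$ from \eqref{eq:piDPodd}--\eqref{eq:piDPev}. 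To construct the bar involution it therefore suffices to verify that the prescribed images of the generators satisfy each of these relations; the universal property then produces a $\K$-algebra endomorphism $\ov{\phantom{x}}^\imath$, and checking that its square is the identity on the generators upgrades it to an involution.

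First I would dispatch the torus and crossed relations. The assignment $K_\mu\mapsto J_\mu K_\mu^{-1}$ is multiplicative because the $J$'s and $K$'s commute, $J_{2\mu}=1$, and $\tJ_i$ is fixed. For the commutation relation $K_\mu B_i = q^{\ang{\mu,i'}}B_iK_\mu$, applying the candidate map produces on the right-hand side a factor $\ov{q^{\ang{\mu,i'}}}^\imath = \pi^{\ang{\mu,i'}}q^{-\ang{\mu,i'}}$, which is exactly the factor obtained after moving $B_i$ past $J_\mu K_\mu^{-1}$ using R4 and R5, noting that $\mu\in Y^\imath$ forces $\ang{\mu,i'}=\ang{\mu,(\tau i)'}$ so that the $F_i$ and $E_{\tau i}$ summands of $B_i$ transform consistently. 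The commutation relations with $\tJ_i$ follow similarly since $\ov{\pi}^\imath=\pi$, and the $a_{ij}=0$ relations among the $B_i$ are preserved because $B_i$ is fixed and the bar-invariance of the relevant scalar coefficients is guaranteed by \eqref{bar2}.

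The heart of the argument, and the main obstacle, is verifying that the $\imath$-Serre relations are preserved; this is precisely where the parameter conditions \eqref{bar1}--\eqref{bar3} are engineered. When $\tau i = i$, condition \eqref{bar1} gives $\ov{\vs_i q_i}^\imath = \vs_i q_i$, and a direct calculation shows $\ov{[n]_i}^\imath = [n]_i$ (numerator and denominator of the $(q,\pi)$-integer each flip sign under $q_i \mapsto \pi_i q_i^{-1}$), so every coefficient appearing in \eqref{eq:piDPodd}--\eqref{eq:piDPev} is bar-invariant; combined with $\ov{B_i}^\imath=B_i$ and $\ov{\tJ_i}^\imath=\tJ_i$, this yields $\ov{B_{i,\even}^{(m)}}^\imath = B_{i,\even}^{(m)}$ and $\ov{B_{i,\odd}^{(m)}}^\imath = B_{i,\odd}^{(m)}$, so the corresponding $\imath$-Serre relation in \cite[Theorem~4.2]{C19} maps to itself termwise. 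When $\tau i\ne i$ and $a_{i,\tau i}\ne 0$, condition \eqref{bar3} supplies exactly the $\pi_i q_i^{-a_{i,\tau i}}$ correction needed to balance the coefficients appearing in the mixed Serre relation, while \eqref{bar2} handles the decoupled case $a_{i,\tau i}=0$.

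Finally, involutivity is immediate on generators: $B_i$ and $\tJ_i$ are fixed; $K_\mu\mapsto J_\mu K_\mu^{-1}\mapsto J_\mu(J_\mu K_\mu^{-1})^{-1}=J_\mu^2 K_\mu=K_\mu$; and on scalars $q\mapsto\pi q^{-1}\mapsto\pi(\pi q^{-1})^{-1}=q$ with $\pi$ fixed. Hence $(\ov{\phantom{x}}^\imath)^2=\id$ on a generating set and therefore on all of $\UUi$, so $\ov{\phantom{x}}^\imath$ is a $\K$-algebra automorphism of the required form.
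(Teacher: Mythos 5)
Your proof takes the same route the paper does: Proposition~\ref{prop:bar} is imported from \cite[Prop.~4.10]{C19}, where it is established exactly as you describe — via the Serre presentation of $\UUi$, checking that the candidate assignment preserves each defining relation, with the key point being bar-invariance of the $\imath^\pi$-divided powers, which follows from $\ov{[n]_i}=[n]_i$ together with conditions \eqref{bar1}--\eqref{bar3}. The argument is correct; the only slip is the sign in the crossed relation (for $\mu\in Y^\imath$ one has $\ang{\mu,(\tau i)'}=-\ang{\mu,i'}$, so both summands of $B_i$ pick up the same factor $q^{-\ang{\mu,i'}}$), which does not affect the conclusion.
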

  
  Note that bar-involution for $\UUi$ (which we will henceforth denote with $\ipsi$) differs from the bar-involution for $\UU$ (which we will now call $\psi$) defined in \S~\ref{subsec:bar+rmatrix} previously when restricted to $\UUi$: $ \ipsi $ fixes $ B_i $ but $ \psi(F_i + \varsigma_i E_{\tau i} \tK_i^{-1}) = F_i + \ov{\varsigma_i} E_{\tau i} \tJ_i \tK_i $
  In the next section, we will construct a quasi-$K$-matrix $ \Upsilon $ intertwining the two involutions, which will lead to a theory of canonical bases in the following sections.

%%%%%%%%
%%%%%%%%
\section{Quasi-$K$-matrix}
\label{sec:quasiKmatrix}
% Remark on convention: Even though BW uses leading term E (Kashiwara's notation is more compatible with K-matrices etc), using leading term F is more useful for e.g. Lusztig's braid group operator (QSG V).

The goal of this section will be the development of a quasi-$K$-matrix for $\UUi$. Let $\hat{\UU}$ be the completion of $ \UU $ with respect to the descending sequence of $\Q(q)^\pi$-submodules $\UU^-\UU^0\left(\sum_{\hgt(\mu)\geq N}\UU^+_\mu\right)$. We have an embedding of $ \UU $ into $ \hat{\UU} $,  and by continuity the $ \Q(q)^\pi $-algebra structure on $ \UU $ extends to $ \hat{\UU} $, and the bar-involution $\psi$ on $ \UU $ extends to an involution on $ \hat{\UU} $, which we also denote $ \psi $. Let $ \hat{\UU}^+ $ denote the closure of $ \UU^+ $ in $ \hat{\UU} $. 

We will show that there exists a unique family of elements $ \Upsilon_\mu \in \UU_\mu^+ $ such that $ \Upsilon_0 = 1 $ and $ \Upsilon = \sum_\mu \Upsilon_\mu $ satisfies the following identity in $ \hat{\UU} $:
	\begin{equation}
		\label{eq:intertwiner}
		\psi_\imath(u) \Upsilon = \Upsilon \psi(u), \quad \text{for all } u \in \UUi.
	\end{equation}
$\Upsilon$ is called the {\em quasi-$K$-matrix} cf. \cite{BK15}; the terminology {\em intertwiner} also appears in the literature e.g. \cite[Chapter~2]{BW18a}, since $ \Upsilon $ "intertwines" the bar-involutions $ \psi^\imath $ for $ \UUi $ and $ \psi $ for $ \UU $, which are not compatible under the embedding $\imath$. 

\subsection{A parity operator}
A crucial ingredient of the quasi-$K$-matrix construction in \cite{BW18a} is \cite[Prop~3.1.6]{Lu94}; its quantum covering analogue is Proposition~\ref{prop:EF222} above. However, when attempting a similar computation in the quantum covering case, we run into the following issue: since $ \overline{B_i} = F_i + \overline{c_i} E_i J_i K_i $ in $ \UU$, we would like to have $ \Upsilon = \sum_{\mu} \Upsilon_{\mu} \in \hat{\UU}^{+} $ satisfying
$$
(F_i + c_i E_i K_i^{-1}) \Upsilon = \Upsilon (F_i + \overline{c_i} E_i J_i K_i)
$$
and so we have equivalently that
$$
F_i \Upsilon_{\mu} - \Upsilon_{\mu} F_i = \Upsilon_{\mu-2i} \overline{c_i} E_i J_i K_i - c_i E_i K_i^{-1} \Upsilon_{\mu-2i}
$$
Unfortunately here we cannot apply Prop~\ref{prop:EF222} when $ p(\mu) = \bar{1} $ due to an extraneous factor of $ \pi_i $. 

Borrowing inspiration from \cite{BKK}, we can get around this issue by enlarging our algebra slightly by introducing a parity operator $\sigma$ such that
$$
\sigma E_i = \pi^{p(i)} E_i \sigma, \quad \sigma F_i = \pi^{p(i)} F_i \sigma, \quad \sigma K_\mu = K_\mu \sigma \text{ and } \sigma J_\mu = J_\mu \sigma
$$
and separating odd and even parts $\Upsilon = \Upsilon_{\bar{0}} + \sigma \Upsilon_{\bar{1}} $.

\subsection{Quasi-$K$-matrix for $\osp(1|2n)$}
\label{sec:osp12n}
In finite type rank $n$, we want to define $ \Upsilon = \sum_{\mu} \sigma^{p(\mu)} \Upsilon_{\mu} \in \hat{\UU}^{+} $ satisfying
$$
(F_i + c_i E_i K_i^{-1}) \Upsilon = \Upsilon (F_i + \overline{c_i} E_i J_i K_i)
$$
which together with Proposition~\ref{prop:EF222} yields equivalent conditions (which are the same for $ p(\mu) $ even or odd) in terms of the twisted derivations $r_i $ and $ {}_i r $ defined as in \S~\ref{subsec:rtwisted}:
\begin{align}
\label{riup}
r_i (\Upsilon_\mu) &= -(\pi_i q_i - q_i^{-1})(c_i \pi_i q_i^2 ) \Upsilon_{\mu - 2 i} E_i  \\
\label{irup}
{}_i r(\Upsilon_\mu) &= -(\pi_i q_i - q_i^{-1})(c_i \pi_i q_i^2 ) E_i \Upsilon_{\mu - 2 i}
\end{align}
where we have used the fact that $ \pi_i^{p(i)} = \pi^{p(i)^2} = \pi_i $ since by the bar-consistency condition $p(i) \equiv d_i \pmod 2$.

With this, we can use the methods in \cite[Section~2.4]{BW18a} (cf. also \cite[Section~6.2]{BK18}) to construct $ \Upsilon $. Recall the non-degenerate symmetric bilinear form $ (\cdot,\cdot) $ on $ '\bf{f} $ defined preceding \S~\ref{subsec:rtwisted} above. We have
\begin{align*}
\eqref{riup} & \iff	(r_i (\Upsilon_\mu), z) = -(\pi_i q_i - q_i^{-1})(c_i \pi_i q_i^2 ) (\Upsilon_{\mu - 2 i} E_i, z) \\
& \overset{\eqref{eq:derivadjunct}}{\iff}  (\Upsilon_\mu, z E_i) = -(\pi_i q_i - q_i^{-1}) (c_i \pi_i q_i^2 ) (E_i,E_i)^2 (\Upsilon_{\mu - 2 i}, r_i(z)) \\
& \iff (\Upsilon_\mu, E_i z) = - c_i q_i^3 (1 - \pi_i q_i^{-2})^{-1} (\Upsilon_{\mu - 2 i}, {}_i r(z)) .
\end{align*}
Applying a similar argument to $ \eqref{irup}$, we have:
\begin{lem}
The conditions \eqref{riup}--\eqref{irup} yield the equivalent conditions
\begin{align}
	\label{pupl}
	(\Upsilon_\mu, E_i z) &= - c_i q_i^3 (1 - \pi_i q_i^{-2})^{-1} (\Upsilon_{\mu - 2 i}, {}_i r(z)) \\
	\label{pupr}
	(\Upsilon_\mu, z E_i) &= - c_i q_i^3 (1 - \pi_i q_i^{-2})^{-1} (\Upsilon_{\mu - 2 i}, r_i(z))
\end{align}
\end{lem}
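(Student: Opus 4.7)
The plan is to dispatch each equivalence by the same dualize-and-adjoin recipe already displayed just before the lemma. For \eqref{riup}$\iff$\eqref{pupr}, first identify $\Upsilon_\mu \in \UU^+_\mu$ with its counterpart in $\ff_\mu$ and observe that by non-degeneracy of $(\cdot,\cdot)$ on the weight space $\ff_{\mu-i}$, the operator identity \eqref{riup} is equivalent to the pairing identity $(r_i(\Upsilon_\mu), z) = -(\pi_i q_i - q_i^{-1})(c_i \pi_i q_i^2)(\Upsilon_{\mu-2i} E_i, z)$ holding for all $z \in \ff_{\mu-i}$. The right-multiplication adjunction $(y\theta_i, x) = (\theta_i,\theta_i)(y, r_i(x))$ from \eqref{eq:derivadjunct}, applied with $y = \Upsilon_{\mu-2i}$ and $x = z$, rewrites the right-hand side as $-(\pi_i q_i - q_i^{-1})(c_i \pi_i q_i^2)(\theta_i,\theta_i)(\Upsilon_{\mu-2i}, r_i(z))$, while the same adjunction combined with the symmetry of the form (applied with $y=z$, $x=\Upsilon_\mu$) rewrites the left-hand side as $(\theta_i,\theta_i)^{-1}(\Upsilon_\mu, z E_i)$. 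Multiplying through by $(\theta_i,\theta_i)$ and invoking $(\theta_i,\theta_i) = (1-\pi_i q_i^{-2})^{-1}$ from \eqref{eq:bilinij} together with the elementary identity $(\pi_i q_i - q_i^{-1})(\pi_i q_i^2) = q_i^3(1-\pi_i q_i^{-2})$ yields exactly the coefficient $-c_i q_i^3 (1-\pi_i q_i^{-2})^{-1}$ of \eqref{pupr}.

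The second equivalence \eqref{irup}$\iff$\eqref{pupl} is entirely parallel: one uses the other half of \eqref{eq:derivadjunct}, namely $(\theta_i y, x) = (\theta_i,\theta_i)(y, {}_i r(x))$, to move left-multiplication by $E_i$ across the pairing and convert the operator identity for ${}_i r(\Upsilon_\mu)$ into the pairing statement involving $E_i z$ and ${}_i r(z)$. No new ideas enter, and the same scalar reduction applies verbatim.

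The only real obstacle is bookkeeping, particularly verifying that the numerical factor $-(\pi_i q_i - q_i^{-1})(c_i \pi_i q_i^2)(\theta_i,\theta_i)^2$ collapses cleanly to $-c_i q_i^3(1-\pi_i q_i^{-2})^{-1}$, and that the parity factor $\pi_i^{p(i)} = \pi_i$ appearing in the derivation of \eqref{riup}--\eqref{irup} from Proposition~\ref{prop:EF222} (via the bar-consistency identity $p(i)^2 \equiv p(i) \pmod 2$) has been absorbed correctly and does not reappear in the final coefficient. Since every manipulation used — non-degeneracy of $(\cdot,\cdot)$ on each weight space, the adjunction identities \eqref{eq:derivadjunct}, and the symmetry of the form — is reversible, both directions of each equivalence follow in one stroke.
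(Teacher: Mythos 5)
Your proof is correct and follows essentially the same route as the paper: non-degeneracy of $(\cdot,\cdot)$ on each weight space, the adjunction identities \eqref{eq:derivadjunct}, symmetry of the form, and the scalar collapse $(\pi_i q_i - q_i^{-1})\,\pi_i q_i^{2}\,(\theta_i,\theta_i)^{2} = q_i^{3}(1-\pi_i q_i^{-2})^{-1}$. If anything, your pairing of \eqref{riup} with \eqref{pupr} and \eqref{irup} with \eqref{pupl} is stated more carefully than the paper's displayed chain, whose final line appears to swap the two dual conditions.
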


Thus we may inductively define $ \Upsilon^*_L $ and $ \Upsilon^*_R $ in $ '\bf{f}^* $ the non-restricted dual of $ '\bf{f} $ such that $ \Upsilon^*_L(1) = \Upsilon^*_R(1) = 1 $ and
\begin{align}
\label{upl}
\Upsilon^*_L(E_i z) &= - c_i q_i^3 (1 - \pi_i q_i^{-2})^{-1} \Upsilon^*_L ({}_i r(z)) \\
\label{upr}
\Upsilon^*_R(z E_i) &= - c_i q_i^3 (1 - \pi_i q_i^{-2})^{-1} \Upsilon^*_R(r_i(z))
\end{align}
\\
Note that for all $ i,j \in I $, we have from $ {}_i r(1) = 0 $ and $ {}_i r(E_j) = \delta_{ij} $ that
$$
\Upsilon^*_L(E_i) = 0\quad\text{and}\quad \Upsilon^*_L(E_iE_j) = - c_i q_i^3 (1 - \pi q_i^{-2})^{-1}\delta_{ij},
$$
and similarly for $ \Upsilon^*_R $.

\begin{lem}
	\label{lem:evenups}
	For $ x \in \ffpr_\mu $, if either $ p(\mu) $ or $ \hgt(\mu) $ is odd, then $ \Upsilon^*_L(x) = \Upsilon^*_R(x) = 0 $ 
\end{lem}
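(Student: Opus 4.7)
The proof will proceed by induction on $\hgt(\mu)$, treating $\Upsilon^*_L$ and $\Upsilon^*_R$ simultaneously (their arguments are symmetric, just swapping left/right multiplication by $E_i$ and correspondingly ${}_i r$ with $r_i$).

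For the base case, when $\hgt(\mu) = 0$, both $p(\mu)$ and $\hgt(\mu)$ are even, so the hypothesis of the lemma is vacuously satisfied (there is nothing to prove). When $\hgt(\mu) = 1$, we have $\mu = i$ for some $i \in I$, so ${}'\mathbf{f}_\mu = \K(q)^\pi \cdot E_i$, and the observation immediately preceding the lemma records $\Upsilon^*_L(E_i) = 0 = \Upsilon^*_R(E_i)$.

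For the inductive step, assume $\hgt(\mu) \geq 2$ and that the lemma holds for all weights of smaller height. Every element $x \in {}'\mathbf{f}_\mu$ is a $\K(q)^\pi$-linear combination of monomials of the form $E_i z$ with $i \in I$ and $z \in {}'\mathbf{f}_{\mu - i}$ (and similarly $z E_i$ for the $\Upsilon^*_R$ argument). By \eqref{upl},
\begin{equation*}
\Upsilon^*_L(E_i z) = -c_i q_i^3 (1 - \pi_i q_i^{-2})^{-1}\, \Upsilon^*_L({}_i r(z)).
\end{equation*}
Now ${}_i r(z) \in {}'\mathbf{f}_{\mu - 2i}$, and crucially, $\hgt(\mu - 2i) = \hgt(\mu) - 2$ and $p(\mu - 2i) = p(\mu) - 2p(i) \equiv p(\mu) \pmod{2}$, so oddness of either $\hgt$ or $p$ is preserved when passing from $\mu$ to $\mu - 2i$. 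By the inductive hypothesis, $\Upsilon^*_L({}_i r(z)) = 0$, hence $\Upsilon^*_L(E_i z) = 0$, and by linearity $\Upsilon^*_L(x) = 0$. The identical argument using \eqref{upr} with $r_i$ in place of ${}_i r$ yields $\Upsilon^*_R(x) = 0$, completing the induction.

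There is no real obstacle here; the proof is a direct induction whose only content is the observation that the recursions \eqref{upl}--\eqref{upr} decrease the weight by exactly $2i$, which preserves both the parity class and the parity of the height. The slight subtlety worth flagging in the writeup is just making sure that one uses a spanning set of the form $\{E_i z\}$ (resp.\ $\{z E_i\}$) to apply \eqref{upl} (resp.\ \eqref{upr}), which is automatic since ${}'\mathbf{f}$ is generated by $\{E_i\}_{i \in I}$.
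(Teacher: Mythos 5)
Your proof is correct and follows essentially the same route as the paper: induction on $\hgt(\mu)$ using the recursions \eqref{upl}--\eqref{upr}, with the key observation that passing from $E_i z$ to ${}_i r(z)$ drops the weight by $2i$ and hence preserves the parity of both $p(\mu)$ and $\hgt(\mu)$. The only cosmetic difference is that you run the two parity cases uniformly in one induction while the paper treats odd $p(\mu)$ and refers to the odd-height case as similar.
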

\begin{proof}
	We show this for odd $ p(\mu) $ by induction on $\hgt(\mu)$ (the statement for odd $ \hgt(\mu) $ is similar). The base cases $ \hgt(\mu) = 1,3 $ are given above. For homogeneous such $ x \in \ffpr_\mu $, $ x = E_i z $ for some $ z \in \ffpr_\nu $ so $ {}_i r(z) \in \ffpr_{\nu - i} $ where $ p(\nu - i) $ is odd ($p(\nu)$ and $p(i)$ have opposite parity since $ p(\mu) = p(\nu) + p(i) $ is odd), and so by induction hypothesis, $\Upsilon^*_L ({}_i r(z)) = 0 $, and hence by \eqref{upl}, $\Upsilon^*_L(x) = 0 $ as well (similarly for $ \Upsilon^*_R $).
\end{proof}

Note that as a result, there will be no odd terms in $ \Upsilon $ i.e. for $ p(\mu) = \ov{1} $, $ \Upsilon_\mu = 0 $. 

%similarly to \cite[Lemma~2.16]{BW18a}:
\begin{lem}
	\label{upequal}
	We have $ \Upsilon^*_L = \Upsilon^*_R $.
\end{lem}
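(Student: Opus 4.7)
The plan is to prove $\Upsilon^*_L(x) = \Upsilon^*_R(x)$ by induction on the height $n = \hgt(|x|)$; by linearity it suffices to treat a monomial $x = \theta_{i_1}\theta_{i_2}\cdots \theta_{i_n} \in \ffpr$. The base cases are immediate: for $n = 0$ both sides equal $1$, and for $n = 1$ both sides vanish by Lemma~\ref{lem:evenups} since the height is odd.

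For the inductive step with $n \geq 2$, I would decompose $x = \theta_{i_1} y \theta_{i_n}$ with $y = \theta_{i_2}\cdots \theta_{i_{n-1}}$ of height $n-2$ (taking $y = 1$ when $n = 2$), and compute $\Upsilon^*_L(x)$ and $\Upsilon^*_R(x)$ via the respective factorizations $x = \theta_{i_1}(y\theta_{i_n})$ and $x = (\theta_{i_1}y)\theta_{i_n}$. Writing $\alpha_j := c_j q_j^3(1-\pi_j q_j^{-2})^{-1}$ and applying \eqref{upl} together with the Leibniz rule for ${}_{i_1}r$ yields
\[
\Upsilon^*_L(x) = -\alpha_{i_1}\Upsilon^*_L\bigl({}_{i_1}r(y)\,\theta_{i_n}\bigr) -\alpha_{i_1}\pi^{p(y)p(i_1)}q^{|y|\cdot i_1}\delta_{i_1,i_n}\Upsilon^*_L(y),
\]
and symmetrically $\Upsilon^*_R(x)$ decomposes via \eqref{upr} and the Leibniz rule for $r_{i_n}$.

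For the first ``main'' summand in each decomposition, the induction hypothesis at height $n-1$ allows one to switch between $\Upsilon^*_L$ and $\Upsilon^*_R$; peeling off the outer $\theta$ using the opposite recursion, commuting the two derivations via Lemma~\ref{lem:rijr}, and invoking the induction hypothesis once more at height $n-2$ reduces both chains to the identical expression $\alpha_{i_1}\alpha_{i_n}\Upsilon^*_L\bigl({}_{i_1}r(r_{i_n}(y))\bigr)$. The ``boundary'' summand then cancels in the difference $\Upsilon^*_L(x) - \Upsilon^*_R(x)$: when $i_1 = i_n = i$ the two Leibniz rules produce the identical coefficient $-\alpha_i\pi^{p(y)p(i)}q^{|y|\cdot i}$ (multiplying $\Upsilon^*_L(y) = \Upsilon^*_R(y)$ by the induction hypothesis), while if $i_1 \neq i_n$ both Kronecker deltas already vanish.

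The main obstacle I anticipate is the $\pi$- and $q$-exponent bookkeeping through the two nested Leibniz expansions, but the mirror-symmetric forms of the Leibniz rules for ${}_i r$ and $r_i$ together with bar-consistency (via $\pi_i^{p(i)} = \pi_i$) make the diagonal coefficients match exactly. No Serre-type identities enter the argument since it takes place entirely in the free superalgebra $\ffpr$, and Lemma~\ref{lem:rijr} is the crucial technical input enabling the two derivations to commute.
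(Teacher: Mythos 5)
Your proposal is correct and follows essentially the same route as the paper: induction on height, the decomposition $x=\theta_{i_1}y\,\theta_{i_n}$, the two Leibniz expansions whose boundary terms carry identical coefficients when $i_1=i_n$, and the reduction of the main terms to a common expression via the induction hypothesis at heights $n-1$ and $n-2$ together with the commutation $r_j\circ{}_i r={}_i r\circ r_j$ of Lemma~\ref{lem:rijr}. No substantive differences to report.
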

\begin{proof}
	We will show that $ \Upsilon^*_L(x) = \Upsilon^*_R(x) $ for all homogeneous $ x \in \ffpr_{\mu} $ by induction on $\hgt(\mu)$, using Lemma~\ref{lem:rivanishing} above.

	The base cases $ \hgt(|x|) = 0 $ or $1$ are trivial from the definition. Suppose that the identity holds for all homogeneous elements with height no greater than $k$ for $ k \geq 1 $, and let $ x = E_i x' E_j $ with $ \hgt(|x|) = k+1 \geq 2 $ for some $ i,j \in I $. Let $ \xi_k = - c_k q_k^3 (1 - \pi_k q_k^{-2})^{-1} $. Then,
	\begin{align*}
	\Upsilon^*_L (E_i x' E_j) &= \xi_i \Upsilon^*_L({}_i r(x' E_j)) \\
	&= \xi_i \left( \Upsilon^*_L({}_i r(x') E_j) + \pi^{p(x') p(i)} q^{|x'|\cdot i} \Upsilon^*_L(x' {}_i r(E_j))  \right) 
	\end{align*}
	and 
	\begin{align*}
	\Upsilon^*_R (E_i x' E_j) &= \xi_j \Upsilon^*_R(r_j( E_i x')) \\
	&= \xi_j \left( \Upsilon^*_R(E_i r_j(x')) + \pi^{p(x') p(j)} q^{|x'|\cdot j} \Upsilon^*_R(x' r_j(E_i))  \right).
	\end{align*}
	
	The second terms of both of the final expressions above vanish unless $ i = j $, in which case they are both equal (by application of the induction hypothesis to $x'$ of height $ k - 1 $), so it remains to show that
	$$
	\xi_i \Upsilon^*_L({}_i r(x') E_j) = \xi_j \Upsilon^*_R(E_i r_j(x'))
	$$
	
	This can be done by applying the induction hypothesis to $ {}_i r(x') E_j $ and $ E_i r_j(x') $ to obtain
	$$
	\xi_i \Upsilon^*_L({}_i r(x') E_j) = \xi_i \Upsilon^*_R({}_i r(x') E_j) \overset{\eqref{upr}}{=} \xi_i \xi_j \Upsilon^*_R(r_j \circ {}_i r(x'))
	$$
	and
	$$
	\xi_j \Upsilon^*_R(E_i r_j(x')) = \xi_j \Upsilon^*_L(E_i r_j(x')) \overset{\eqref{upl}}{=} \xi_i \xi_j \Upsilon^*_L({}_i r \circ r_j(x'))
	$$
	and from the fact that $ r_j \circ {}_i r = {}_i r \circ r_j $ by Lemma \ref{lem:rijr}, and the induction hypothesis, since $ r_j \circ {}_i r(x') = {}_i r \circ r_j(x') \in \ffpr_{|x'| - i - j} $, the desired result follows.
\end{proof}

Thus, we can denote $ \Upsilon^*_L = \Upsilon^*_R $ by $  \Upsilon^* $. For the Serre relators $ S_{ij} $ for $ \osp(1|2n) $, $ |S_{ij}| $ has height $3$ when $ (i,j) \neq (n,n-1) $, and $ p(S_{n,n-1}) $ is odd, so by \ref{lem:evenups}:
$$
\Upsilon^*(S_{ij}) = 0
$$
and by the same induction argument in \cite[Lemma~2.17]{BW18a},
$$
\Upsilon^*(I) = 0
$$
where $ I = \langle S_{ij} \rangle $ (cf. \cite[Lemma~2.17]{BW18a}) so $ \Upsilon^* $ is an element in $ (\UU^+)^* $ (the unrestricted dual of $ \UU^+ $).

Then, we can construct $ \Upsilon $ in the same way [following the proof of Theorem 2.10]:

Let $ B = \{ b \} $ be a basis of $ \UU^- $ such that $ B_\mu = B \cap \UU_{-\mu}^+ $ is a basis for $ \UU_{-\mu}^+ $, and let $ B^* = \{ b^* \} $ be the dual basis of $B$ with respect to $ (\cdot,\cdot) $ and let 
\begin{equation}
\Upsilon := \sum_{b \in B} \Upsilon^*(b^*) b = \sum_{\mu} \Upsilon_\mu \in \hat{\UU}^+ .
\end{equation}
%observe: no terms with $ p(\mu) = 1 $. 
As functions on $ \UU^+ $, we have $ (\Upsilon,\cdot) = \Upsilon^* $, and $ \Upsilon_0 = 1 $. Also $ \Upsilon $ satisfies the identities in \eqref{riup} and \eqref{irup} by construction, because $ \Upsilon^* $ satisfies the equivalent identities in \eqref{upl} and \eqref{upr}). 

From this we see that $ r_i(\Upsilon_\mu) $ is determined by $ \Upsilon_\nu $ with weight $ \nu \prec \mu $. Together with Lemma~\ref{lem:rivanishing},
%(cf. \cite[Lemma~1.2.15]{Lu94})
this implies the uniqueness of $ \Upsilon $.\qed

\begin{rem}
	For rank 2, we can generalize the above slightly by having $ \kappa_1 \neq 0 $ (i.e. $ B_1 = F_1 + c_1 E_1 K_1^{-1} + \kappa_1 K_1^{-1} $). In this scenario, for $ i = 1 $ and $ \pi_1 = \pi^{p(1)} = 1 $, we have the following replacements for \eqref{riup} and \eqref{irup} (using $ \alpha_1 $ for $i = 1 \in I $ to avoid confusion):
\begin{align}
\label{r1up}
r_1 (\Upsilon_\mu) &= -(q_1 - q_1^{-1}) \left( (c_1 q_1^2 ) \Upsilon_{\mu - 2 \alpha_1} E_1 + \kappa_1 \Upsilon_{ \mu - \alpha_1 } \right) \\
\label{1rup}
{}_1 r(\Upsilon_\mu) &= -(q_1 - q_1^{-1}) \left( (c_1 q_1^2 ) E_1 \Upsilon_{\mu - 2 \alpha_1} + \kappa_1 \Upsilon_{ \mu - \alpha_1 } \right) 
\end{align}
This leads to the following replacements for $i = 1$ in the inductive definition for $ \Upsilon_L^* $ and $ \Upsilon_R^* $:
\begin{align}
\label{uplk}
\Upsilon^*_L(E_1 z) &= - c_1 q_1^3 (1 - q_1^{-2})^{-1} \Upsilon^*_L ({}_1 r(z)) - \kappa_1 q_1 \Upsilon_L^*(z) \\
\label{uprk}
\Upsilon^*_R(z E_1) &= - c_1 q_1^3 (1 - q_1^{-2})^{-1} \Upsilon^*_R(r_1(z)) - \kappa_1 q_1 \Upsilon_R^*(z).
\end{align}
It can then be checked that $ \Upsilon_L^* = \Upsilon_R^* =: \Upsilon^* $ and $ \Upsilon^*(I) = 0 $ for $ I = \langle S_{12}, S_{21} \rangle $, and so the above construction for $ \Upsilon $ also holds. Note that $ \Upsilon $ is still even in this case since $ \kappa_1 $ is a coefficient for the long, even root.
\end{rem}

\subsection{Example: rank $1$ (single odd root)} Let
$$
\Upsilon = \sum_{k \geq 0} a_{2k} E^{(2k)} + a_{2k+1} \sigma E^{(2k+1)}
$$
\\
Then, Proposition~\ref{prop:EF222} in rank one gives
\begin{equation*}
	\label{eq:2221}
	E^{(N)} F - \pi^{N} F E^{(N)} = \pi^{N-1} \qbinom{ K; 1-N }{1} E^{(N-1)} = \pi E^{(N-1)} \frac{ (\pi q)^{1-N} JK - q^{N-1} K^{-1} }{\pi q - q^{-1}}
\end{equation*}
\\
We need to separate the computation for the condition $ B \Upsilon = \Upsilon \overline{B} $ when $N$ is even from when $N$ is odd. When $N=2k$ is even, we have 
$$ 
a_{2k} ( E^{(2k)} F - \pi^{N} F E^{(2k)} ) = a_{2k - 2} ( cq^2 K^{-1} E E^{(2k-2)} - \bar{c} E E^{(2k-2)} JK )
$$ 
and so using \eqref{eq:2221} and comparing coefficients of $ E^{(2k-1)} JK $ and $ E^{(2k-1)} K^{-1} $ respectively yield the (over-determined) system of solutions
$$
a_{2k} = - c \pi q^2 (\pi q - q^{-1}) q^{1-2k} [2k-1] a_{2k-2}
$$
and
$$
a_{2k} = - c \pi q^2 (\pi q - q^{-1}) q^{2k-1} q^{2(1-2k)} [2k-1] a_{2k-2}.
$$
Hence for $k$ even,
\begin{align*}
	a_{2k} &= (-c \pi q^2)^k ( \pi q - q^{-1})^k q^{-k^2} [2k-1]^{!!}
\end{align*}
where $ [2k-1]^{!!} = [2k-1]\cdot [2k-3] \cdot \ldots \cdot [1] $ (normalization: $ a_0 = 1 $).
\\
For $N$ odd, we also obtain an over-determined system of two solutions:
\begin{align*}
	a_{2k+1} &= (-c \pi q^2) (\pi q - q^{-1}) q^{-2k} [2k] a_{2k-1} \\
	&= (-c \pi q^2)^{k+1} ( \pi q - q^{-1})^{k+1} q^{-2\binom{k+1}{2}} [2k]^{!!} a_{-1}
\end{align*}
where $[2k]^{!!} = [2k]\cdot [2k-2] \cdot \ldots \cdot [2]  $. Since $ a_{-1} = 0 $, we see that $ \Upsilon $ has no odd part.
\\
\\
So we have
\begin{equation*}
	\Upsilon = \sum_{k \geq 0} (-c \pi q^2)^k ( \pi q - q^{-1})^k q^{-k^2} [2k-1]^{!!} E^{(2k)} %+ \sigma  (-c \pi q^2)^k ( \pi q - q^{-1})^k q^{-2\binom{k+1}{2}} [2k]^{!!} a_1 E^{(2k+1)} 
\end{equation*}
%todo: comment: $\sigma$ does not appear in $\Upsilon$ i.e. it is even}
\\
\\
Note that $\Upsilon$ is a solution to the system of equations 
\begin{equation}
	\label{1r}
	{}_1r(\Upsilon) = -c \pi q^2 (\pi q - q^{-1}) E \Upsilon,
\end{equation}
and
\begin{equation}
	\label{r1}
	r_1(\Upsilon) = -c \pi q^2 (\pi q - q^{-1}) \Upsilon E,
\end{equation}
 and indeed may be defined as the unique such solution (cf. \cite[Proposition~6.3]{BK18})
\\
\\
Existence: this can be verified for $ \Upsilon $ defined above using $ {}_1 r(E^{(2k)}) = q^{2k-1} E^{(2k-1)} $ for the first equation:
\begin{align*}
	{}_1 r (\Upsilon_{2k}) &= {}_1 r (a_{2k} E^{(2k)}) \\
	&= a_{2k} q^{2k-1} E^{(2k-1)} \\
	&= - c \pi q^2 (\pi q - q^{-1}) a_{2k-2} [2k-1] \frac{EE^{(2k-2)}}{[2k-1]} \\
	&= (- c \pi q^2) (\pi q - q^{-1}) E \Upsilon_{2k-2},
\end{align*}
and using $ r_1(E^{(2k)}) = q^{2k-1} E^{(2k-1)} ( = {}_1 r(E^{(2k)}) ) $ for the second.
\\
\\
Note that this definition implies no odd part for $ \Upsilon $, because
\begin{align*}
	{}_1 r (\Upsilon_{2k+1}) &= {}_1 r (a_{2k+1} \sigma E^{(2k+1)}) \\
	&= a_{2k+1} q^{2k} \sigma E^{(2k)} \\
	&= - c \pi q^2 (\pi q - q^{-1}) a_{2k-1} \pi [2k] E \sigma \frac{ E^{(2k-1)}}{[2k]}% note that there is a $\pi$ from commuting $\sigma$ past E}} \\
	&= \pi (- c \pi q^2) (\pi q - q^{-1}) E \Upsilon_{2k-1}
\end{align*}

\begin{rem}[rank $1$ nonstandard]
	When we repeat the above computations with an additional term $ s K^{-1} $ in $B$, we get the condition that 
	$$
	a_N \left( {}_1 r (E^{(N)}) \right) = a_N \left( r_1 (E^{(N)}) \right) = - ( \pi q - q^{-1} ) ( c \pi q^2 [N-1] a_{N-2} + s \pi a_{N-1} \sigma E^{(N-1)} ),
	$$
	and since $ {}_1 r (E^{(N)}) = r_1 (E^{(N)}) = q^{N-1} E^{(N-1)}  $, there are no terms with $ \sigma E^{(N-1)} $ on the left hand side, and no solutions for $ s \neq 0 $.
\end{rem}

\subsection{Quasi-$K$-matrix for quasi-split QSP of general super Kac-Moody type}
Now let $ \UU $ be a general quantum covering group of super Kac-Moody type as defined in \S\ref{sec:QCG}, and $ (\UU,\UUi)  $ a quasi-split quantum symmetric pair for $ \UU $, with bar-involutions $\psi$ on $\UU$ and $\ipsi$ on $\UUi$ respectively.
\begin{thm}
	\label{thm:Upsilon}
	There exists a unique family of elements $ \Upsilon_\mu \in \UU_\mu^+ $ such that $ \Upsilon_0 = 1 $ and $ \Upsilon = \sum_\mu \Upsilon_\mu $ satisfies the following identity in $ \hat{\UU} $:
	\begin{equation}
%		\label{eq:intertwiner}
		\psi_\imath(u) \Upsilon = \Upsilon \psi(u), \quad \text{for all } u \in \UUi.
	\end{equation}
Morover, $ \Upsilon_\mu = 0 $ for all $ p(\mu) = \ov{1} $ .
\end{thm}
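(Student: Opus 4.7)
The plan is to adapt the construction in Section~\ref{sec:osp12n} for $\osp(1|2n)$ to the general super Kac--Moody setting, following the analogous approach of Bao--Wang~\cite{BW18a} for $\imath$quantum groups. First I would observe that among the generators $B_i$, $\tJ_i$, $K_\mu$ ($\mu \in Y^\imath$) of $\UUi$, the intertwining identity reduces to its content on $B_i$: the equations arising from $\tJ_i$ and $K_\mu$ are automatic by weight considerations (the lattice $Y^\imath$ is chosen precisely so that $K_\mu$ acts trivially on all relevant root spaces, and likewise for $\tJ_i$ modulo $\pi$). Expanding the identity $B_i \Upsilon = \Upsilon\,\psi(B_i) = \Upsilon\,(F_i + \ov{\vs_i}\, E_{\tau i}\tJ_i\tK_i)$ and collecting by weight yields a recurrence on $\{\Upsilon_\mu\}$ involving $F_i \Upsilon_\mu - \Upsilon_\mu F_i$ on one side and $\Upsilon_{\mu - i - \tau i}$ on the other (multiplied by $E_{\tau i}$ and appropriate Cartan factors). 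Applying Proposition~\ref{prop:EF222}(a) with $x = \Upsilon_\mu$ converts this into an equivalent system prescribing $r_i(\Upsilon_\mu)$ and ${}_i r(\Upsilon_\mu)$ in terms of $\Upsilon_{\mu - i - \tau i}$; the stray factor $\pi_i^{p(\mu)}$ from Proposition~\ref{prop:EF222} is harmless once restricted to even $p(\mu)$, which we will justify by proving the odd part of $\Upsilon$ vanishes.

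Next, following the dual construction of Section~\ref{sec:osp12n}, I would define two linear functionals $\Upsilon^*_L, \Upsilon^*_R$ on $\ffpr$ inductively by $\Upsilon^*(1) = 1$ together with recurrences on $\Upsilon^*(E_i z)$ and $\Upsilon^*(z E_i)$ analogous to \eqref{upl}--\eqref{upr}, obtained by combining the above system with the adjunction identity \eqref{eq:derivadjunct}. A height induction on $\mu$ imitating Lemma~\ref{lem:evenups} shows both functionals vanish on $\ffpr_\mu$ whenever $p(\mu)$ is odd. A parallel induction, now leveraging the commutativity ${}_j r \circ r_i = r_i \circ {}_j r$ from Lemma~\ref{lem:rijr}, proves $\Upsilon^*_L = \Upsilon^*_R$ on $\ffpr$, exactly as in Lemma~\ref{upequal}. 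Call the common function $\Upsilon^*$.

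The main obstacle is showing $\Upsilon^*$ descends to an element of $\ff^*$ by vanishing on the radical $\mc I$ of the bilinear form; in the $\osp(1|2n)$ case this followed from an easy height/parity argument on the Serre relators, but the general super Kac--Moody setting demands more. Since $r_i$ and ${}_i r$ descend to well-defined derivations on $\ff$ by \S\ref{subsec:rtwisted}, the recursive formulas for $\Upsilon^*$ are consistent modulo $\mc I$, so it suffices to verify $\Upsilon^*$ annihilates the $(q,\pi)$-Serre relators $F_{ij}(\theta_i, \theta_j)$ of \eqref{qpipoly}. By Lemma~\ref{lem:rivanishing} combined with height induction, this reduces to rank-two base cases: one checks the vanishing directly for each configuration of $(\tau i, \tau j)$ relative to $\{i,j\}$, using the parameter constraints \eqref{bar1}--\eqref{bar3} to match the $\vs$ and $\ov{\vs}$ factors produced by the left and right recursions. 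Once $\Upsilon^* \in \ff^*$ is established, the quasi-$K$-matrix is defined by $\Upsilon := \sum_{b \in \cB} \Upsilon^*(b^*)\, b \in \hat{\UU}^+$ for a weight-homogeneous $\pi$-basis $\cB$ of $\ff$ dual under $(\cdot,\cdot)$, with $\Upsilon_0 = 1$ and the intertwining identity holding on all of $\UUi$ by construction. Uniqueness is immediate from Lemma~\ref{lem:rivanishing}(a) applied to the difference of two solutions, and the parity statement $\Upsilon_\mu = 0$ for $p(\mu) = \ov{1}$ is inherited from the corresponding vanishing of $\Upsilon^*$ on odd-weight components.
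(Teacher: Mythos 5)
Your proposal follows essentially the same route as the paper: both extend the $\osp(1|2n)$ construction verbatim (with $E_{\tau i}$ in place of $E_i$) via the dual functionals $\Upsilon^*_L=\Upsilon^*_R$ built from \eqref{upl}--\eqref{upr}, Lemma~\ref{lem:rijr} and the parity vanishing of Lemma~\ref{lem:evenups}, identify the vanishing of $\Upsilon^*$ on the general Serre relators as the one remaining obstacle, and then define $\Upsilon=\sum_b\Upsilon^*(b^*)\,b$ with uniqueness from Lemma~\ref{lem:rivanishing}. The only real difference is at that obstacle: where you propose a case-by-case rank-two verification over configurations of $(\tau i,\tau j)$, the paper handles the even-height relators by a term-wise induction showing $\Upsilon^*(E_i^aE_jE_i^b)=0$ for $i\neq j$ and $a+b+1$ even, with base case $\Upsilon^*(E_iE_j)=0$, the odd-height case being already covered by Lemma~\ref{lem:evenups}.
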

\begin{proof}
The constructions in \ref{sec:osp12n} are not particular to $ \osp(1|2n) $ and so hold for quasi-split $\UUi$ of general super Kac-Moody type with $ E_{\tau i} $ replacing $ E_i $, with the exception of checking that $ \Upsilon^*(S_{ij}) = 0 $ for general Serre relators. Using Remark~\ref{lem:evenups} we have shown that this is the case for ht$ (S_{ij}) $ odd, and so it remains to show this for ht$ (S_{ij}) $ even. This can be done term-wise i.e. by showing that terms of the form
\begin{equation}
\label{sijeven}
\Upsilon^*(E_i^a E_j E_i^b) \text{ for $ j \neq i $ and $ a+b+1 $ even}
\end{equation}
vanish. This can done by induction using \eqref{upl} or \eqref{upr}. For instance if $ a > 1 $, we may use \eqref{upl} to show that (using $ \xi_k = - c_k q_k^3 (1 - \pi_k q_k^{-2})^{-1} $ as above)
\begin{align*}
\Upsilon^*(E_i^a E_j E_i^b) &= \xi_i \Upsilon^*({}_i r(E_i^{a-1} E_j E_i^b)) \\
&= \Upsilon^*({}_i r(E_i^{a-1} E_j) E_i^b + \pi_i^{p(ai+j)} q^{(ai+j)\cdot i} E_i^{a-1} E_j {}_i r(E_i^b) ) \\
&= \Upsilon^*({}_i r(E_i^{a-1}) E_j E_i^b + \pi_i^{p(ai+j)} q^{(ai+j)\cdot i} E_i^{a-1} E_j {}_i r(E_i^b) )
\end{align*}  
and each of the two terms are of the form \eqref{sijeven}, and so the induction hypothesis applies; for $ a = 1 $ and not the base case we must have $b > 1 $ so we can use \ref{upr} on the other side. The base case here is $ \Upsilon^*(E_i E_j) = 0 $ for $ i \neq j $ which has been computed above.
\end{proof}

Note that $ \Upsilon $ is invertible in $\hat{\UU}$ and in fact $ \Upsilon^{-1} = \psi(\Upsilon) =: \overline{\Upsilon} $.
%cf. \cite[Corollary~2.13]{BW18a}.
\begin{cor}
 $	\overline{\Upsilon} \cdot \Upsilon = 1 $
\end{cor}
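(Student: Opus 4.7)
The plan is to deduce the corollary from the uniqueness clause in \thmref{thm:Upsilon}: I will show that $\psi(\Upsilon)^{-1}$ satisfies exactly the same characterizing properties as $\Upsilon$, so the two must agree, whence $\overline{\Upsilon}\cdot \Upsilon = \psi(\Upsilon)\cdot \psi(\Upsilon)^{-1} = 1$.

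First I would apply the bar-involution $\psi$ of $\UU$ (extended to $\hat{\UU}$) to both sides of the intertwining identity $\psi_\imath(u)\Upsilon = \Upsilon\psi(u)$. Since $\psi$ is a $\Q$-algebra homomorphism and $\psi^2=\id$, this yields
\[
\psi(\psi_\imath(u))\,\psi(\Upsilon) \;=\; \psi(\Upsilon)\,u \quad \text{for all } u\in\UUi.
\]
Replacing $u$ by $\psi_\imath(u)$ (using that $\psi_\imath$ is an involution on $\UUi$, cf.\ \propref{prop:bar}), I obtain $\psi(u)\,\psi(\Upsilon) = \psi(\Upsilon)\,\psi_\imath(u)$, which after multiplying by $\psi(\Upsilon)^{-1}$ on either side rearranges to the desired intertwining relation
\[
\psi_\imath(u)\,\psi(\Upsilon)^{-1} \;=\; \psi(\Upsilon)^{-1}\,\psi(u), \quad \text{for all } u\in\UUi.
\]

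Next I would verify that $\psi(\Upsilon)^{-1}$ has the shape required by \thmref{thm:Upsilon}. Because $\psi(E_i)=E_i$, the map $\psi$ preserves each weight space $\UU^+_\mu$ (it only rescales coefficients via $\overline{q}=\pi q^{-1}$), so $\psi(\Upsilon)=\sum_\mu \psi(\Upsilon_\mu)\in \hat{\UU}^+$ with $\psi(\Upsilon_0)=1$ and $\psi(\Upsilon_\mu)=0$ whenever $p(\mu)=\overline{1}$. The inverse in $\hat{\UU}^+$ exists as the geometric series $\psi(\Upsilon)^{-1}=\sum_{k\ge 0}(1-\psi(\Upsilon))^k$, which converges in the completion since $1-\psi(\Upsilon)$ has trivial weight-zero component; the resulting element lies in $\hat{\UU}^+$, has leading term $1$, and retains the even-parity support.

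Finally, by the uniqueness asserted in \thmref{thm:Upsilon}, the two elements $\Upsilon$ and $\psi(\Upsilon)^{-1}$ of $\hat{\UU}^+$, both with $0$-component $1$ and both satisfying $\psi_\imath(u)(-)=(-)\psi(u)$, must coincide. Hence $\psi(\Upsilon)^{-1}=\Upsilon$, i.e., $\overline{\Upsilon}\cdot\Upsilon=1$. There is no real obstacle: the argument is a standard uniqueness-of-intertwiner maneuver (compare \cite[Cor.~2.18]{BW18a}); the only points that need verifying are that $\psi$ preserves $\hat{\UU}^+$ and that inversion inside this completion is well-defined, both of which are immediate from the weight-graded structure.
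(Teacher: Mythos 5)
Your proposal is correct and is essentially the paper's own argument: both proofs apply $\psi$ to the intertwining identity, substitute $u \mapsto \psi_\imath(u)$, and invoke the uniqueness clause of Theorem~\ref{thm:Upsilon} to conclude $\psi(\Upsilon)^{-1} = \Upsilon$. The only difference is cosmetic (you apply $\psi$ before multiplying by the inverse, the paper does the reverse), and your extra verification that $\psi(\Upsilon)^{-1}$ lies in $\hat{\UU}^+$ with weight-zero component $1$ is a welcome bit of care that the paper leaves implicit.
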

\begin{proof}
	Multiplying by $\Upsilon^{-1}$ on the left and right on both sides of \eqref{eq:intertwiner} gives us 
	\[
	\Upsilon^{-1}\psi_\imath(u) = \psi(u) \Upsilon^{-1}, \quad \text{for all } u \in \UUi
	\]
	Applying $ \psi $ to both sides and replacing $u$ with $ \ipsi(u) $, we have
	\[
	\psi{\Upsilon^{-1}}\psi(u) = \ipsi(u) \psi{\Upsilon^{-1}}, \quad \text{for all } u \in \UUi
	\]
	and so $\psi{\Upsilon^{-1}}$ also satisfies \eqref{eq:intertwiner} hence by uniqueness $\psi{\Upsilon^{-1}} = \Upsilon$ and so $ \Upsilon^{-1} = \ov{\Upsilon} $.
\end{proof}

%%%%%%%%%%%
%%%%%%%%%%%
\section{Integrality of actions of $\Upsilon$}
\label{sec:upintegrality}

As observed in the non-quantum covering case, it is neither expected nor required that the quasi-$K$-matrix for $\UUi$ beyond finite type is integral on its own cf. \cite{BW16}. For quantum symmetric pairs of super Kac-Moody type, the correct formulation is the integrality of the action of the quasi-$K$-matrix $\Upsilon$ i.e. we will see in this section that $\Upsilon$ preserves the integral $\A$-forms on integrable highest weight $\UU$-modules and their tensor products. 

\subsection{Definitions and background}
% see alsoSection~6 of \cite{BW18c}. 

We will use the following analogue of \cite[Lemma~2.2]{BW16}.

\begin{lem}
	\label{lem:2.2-BW16} Let $(M, B(M))$ be a based $\UU$-module and let $\la \in X$. Then, 
	\begin{enumerate}
		\item
		for $b \in B(M)$, the $\Q(q)$-linear map 
		$
		\pi_b: \UU^- {\bf 1}_{\ov{|b|+\la}} \longrightarrow M \otimes M(\la),  \;
		u \mapsto u(b \otimes \eta_\lambda)),
		$
		restricts to an $\A$-linear map
		$\pi_b: \aA \UU^- {\bf 1}_{\ov{|b|+\la}} \longrightarrow \aA M \otimes_\A \aM(\la)$;
		%,   \quad u \mapsto u(b \otimes 1).$
		
		\item
		we have $\sum_{b \in B(M)}  \pi_b (\aA \UU^- {\bf 1}_{\ov{|b|+\la}})= \aA M \otimes_\A \aM(\la)$.
	\end{enumerate}
\end{lem}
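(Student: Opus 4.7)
The argument generalizes \cite[Lemma~2.2]{BW16} to the quantum covering setting, resting on two standard ingredients: first, that the coproduct restricts to an $\A^\pi$-linear map $\aA\UU^- \to \aA\UU \otimes \aA\UU$, which follows from the formulas of \S\ref{subsec:bar+rmatrix} together with the $\A^\pi$-expansion of $\Delta(F_i^{(n)})$ obtained by induction using the $(q,\pi)$-integer identities of \cite{CHW13}; and second, that $\aA\UU$ preserves both the integral form $\aA M$ of any based $\UU$-module $(M,B(M))$ and the standard integral form $\aM(\la) = \aA\UU^-\eta_\la$ of the Verma module.

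For part (1), fix $b \in B(M)$ and argue by induction on $\hgt(\nu)$ that $u(b \otimes \eta_\la) \in \aA M\otimes_\A \aM(\la)$ for every $u \in \aA\UU^-_{-\nu}$. The case $\nu = 0$ is immediate. For the inductive step, write $u(b\otimes\eta_\la) = \Delta(u)(b \otimes \eta_\la)$; by the first ingredient $\Delta(u) \in \aA\UU \otimes \aA\UU$, and by the second each summand acts on $b$ and on $\eta_\la$ keeping us inside $\aA M \otimes_\A \aM(\la)$.

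For part (2), since $\aM(\la)$ is a free $\aA\UU^-$-module of rank one on $\eta_\la$, the space $\aA M \otimes_\A \aM(\la)$ is $\A^\pi$-spanned by the elements $b \otimes u\eta_\la$ with $b \in B(M)$ and $u \in \aA\UU^-$. The triangular structure of the coproduct on $\aA\UU^-$ yields, for homogeneous $u \in \aA\UU^-_{-\nu}$, an expansion
\[
\pi_b(u) \;=\; b \otimes u\eta_\la \;+\; \sum_{b' \in B(M),\,|b'| < |b|} b' \otimes y_{b'}\eta_\la,
\]
with $y_{b'} \in \aA\UU^-$: the leading term $b \otimes u\eta_\la$ comes from the $1 \otimes u$ summand of $\Delta(u)$, while the remaining terms have first tensor factor of strictly smaller weight than $|b|$. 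Downward induction on $|b|$ with respect to the partial order \eqref{eq:leq} then writes every $b \otimes u\eta_\la$ as an $\A^\pi$-combination of terms $\pi_{b''}(u'')$, since for $b$ of maximal weight in $B(M)$ the error sum is empty.

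The main technical obstacle is the bookkeeping of $\pi$- and $q$-powers coming from the coproduct of divided powers and from the commutation of $K_\mu$, $J_\mu$ with the $F_i$; one must confirm that the coefficients $y_{b'}$ above indeed lie in $\aA\UU^-$. This relies on the based-module property of $M$ (which ensures that the structure constants of $F_i b$ in $B(M)$ lie in $\A^\pi$) together with the bar-consistency of the super Cartan datum, which keeps the analysis uniform across the even and odd sectors.
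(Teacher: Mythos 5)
Part (1) of your argument is fine and is essentially what the paper does (the paper simply cites that the coproduct formulas and the $\A$-stability of the integral forms carry over from \cite{BW16} via \cite{Cl14}). The triangular expansion you write for $\pi_b(u)$ in part (2) is also correct: the $1\otimes u$ summand of $\Delta(u)$ produces the leading term $b\otimes u\eta_\la$, and the remaining summands $u_{(1)}\otimes u_{(2)}$ have $u_{(1)}$ of strictly negative weight, so their first tensor factors land in weights $<|b|$.

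The gap is in the induction you run on this expansion. You propose ``downward induction on $|b|$'' with base case ``$b$ of maximal weight in $B(M)$, where the error sum is empty.'' This does not work: resolving $b\otimes u\eta_\la$ requires knowing the result for the error terms $b'\otimes y_{b'}\eta_\la$ with $|b'|<|b|$, i.e.\ for \emph{smaller} weights, so a top-down induction never reaches them; and an induction from below is not well-founded either, since the weights of a based $\UU$-module need not be bounded below (nor above --- for $M={}^\omega L(\la)$ in Kac--Moody type there is no maximal weight, so your base case need not exist). The measure that actually decreases and terminates is not the weight of the first factor but the total divided-power degree of the second: the error terms $y_{b'}$ coming from $\Delta(F_{i_1}^{(a_1)}\cdots F_{i_n}^{(a_n)})$ have strictly smaller degree $\sum a_j$. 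This is the induction the paper (following \cite[Lemma~2.2]{BW16}, and spelled out for the $\imath$-analogue in the proof of Lemma~\ref{lem:surjA1}) uses: one filters ${}_\A\UU^-$ by the subspaces ${}_\A\UU^-_{\le N}$ spanned by monomials $F_{i_1}^{(a_1)}\cdots F_{i_n}^{(a_n)}$ with $\sum a_j\le N$, transports this to a filtration $\{\aM(\la)_{\le N}\}$, and proves ${}_\A M\otimes_\A \aM(\la)_{\le N}\subseteq \sum_b\pi_b({}_\A\UU^-\one_{\ov{|b|+\la}})$ by induction on $N$, the error terms lying in filtration degree $\le N-1$. Replacing your weight induction by this filtration induction repairs the proof; the rest of your bookkeeping of $\pi$- and $q$-powers is as you describe.
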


\begin{proof}
	The proof is the almost identical to the one for \cite[Lemma~2.2]{BW16}: the comultiplication has the same general formula as \cite[(2.1)]{BW16}, and the quantum covering analogue to (2.2) of \cite{BW16} can be found in \cite[(3.2)-(3.3)]{Cl14}.
\end{proof}

The quantum covering group $ \UUi $ also has a modified form $ \UUidot $ with idempotents via a familar construction cf. \cite[\S3.5]{BW18c}. The bar-involution $ \ipsi $ of $ \UUi $ then induces a bar-involution of the  $\Q^\pi$-algebra $ \UUidot $, also denoted $ \ipsi $, such that $ \ipsi(q) = \pi q^{-1} $ and $ \ipsi(B_i \one_\la) = B_i \one_\la $.
% include details for definitions for $ \UUidot $, and $ \ipsi $ on the $ \Q(q)^\pi $-algebra $\UUidot$.

\begin{definition}
	\label{def:mAUidot}
	Just as in 	Definition~3.10  of \emph{loc. cit.}, we define $_{\A} \UUidot$ to be the set of elements $u \in \UUidot$, such that $ u \cdot m \in {}_\A\UUdot$ for all $m \in {}_\A\UUdot$. Then $_{\A} \UUidot$  is clearly a $\A$-subalgebra of $\UUidot$ which contains all the idempotents $\one_\zeta$ $(\zeta \in X_\imath)$, and $_{\A} \UUidot = \bigoplus_{\zeta \in X_\imath}\,  {}_{\A} \UUidot \one_{\zeta}$.  
	
	Moreover, for $ u \in \UUidot $, we have $ u \in  {}_\A\UUdot $ if and only if $ u \cdot \one_\la \in {}_\A\UUdot $ for each $ \la \in X $ (cf. \cite[Lemma~3.20]{BW18b}).
\end{definition}

As a consequence of the existence of the $\imath^\pi$-divided powers, we have the following proposition.
\begin{prop}
	\label{prop:ipiDP}
	For any $i \in \I$ and $\mu \in X_\imath$, there exists an element $B^{(n)}_{i, \zeta} \in {}_{\A} \UUidot \one_{\zeta}$ satisfying the following 2 properties:
	\begin{enumerate}
		\item 	$\ipsi (B^{(n)}_{i, \zeta})=B^{(n)}_{i, \zeta}$;
		\item 	$ B^{(n)}_{i, \zeta} \one_{\lambda} = F^{(n)}_i \one_{\lambda} +\sum_{a < n}F^{(a)}_i {}_\A \UU^+\one_{\lambda} $, for $\one_{\lambda} \in {}_\A\UUidot$ with $\overline{\lambda} =\zeta$.
	\end{enumerate}
\end{prop}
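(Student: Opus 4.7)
The plan is to define $B^{(n)}_{i,\zeta}$ by cases using the (rank one) $\imath^\pi$-divided powers recalled in \S\ref{subsec:piDP}. If $\tau i \ne i$, I set $B^{(n)}_{i,\zeta} := B_i^{(n)}\one_\zeta$, where $B_i^{(n)} = B_i^n/[n]_i^!$ as in \eqref{eq:iDP1}. If $\tau i = i$, I fix any weight $\lambda\in X$ representing $\zeta \in X_\imath$; by the standard compatibility of the sublattice defining $X_\imath$ with $\langle i, \cdot\rangle$ when $\tau i = i$, the parity of $\langle i,\lambda\rangle$ depends only on $\zeta$. I then set
\[
B^{(n)}_{i,\zeta} := \begin{cases} B_{i,\odd}^{(n)}\,\one_\zeta, & \langle i,\lambda\rangle \text{ odd},\\ B_{i,\even}^{(n)}\,\one_\zeta, & \langle i,\lambda\rangle \text{ even},\end{cases}
\]
using the formulas \eqref{eq:piDPodd}--\eqref{eq:piDPev}.

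To verify property (1), I reduce $\ipsi$-invariance to bar-invariance of the building blocks. The elements $B_i$ and $\tJ_i$ are fixed by $\ipsi$ by \propref{prop:bar}, and each $\one_\zeta$ is fixed by the induced involution on $\UUidot$. It remains to check bar-invariance of the scalar coefficients $\vs_i \pi_i q_i [2j-1]_i^2$ and $\vs_i q_i [2j]_i^2$ appearing in \eqref{eq:piDPodd}--\eqref{eq:piDPev}. The $(q,\pi)$-integers $[n]_i$ are bar-invariant by a short computation from $\ov{q_i}=\pi_i q_i^{-1}$, and $\ov{\vs_i q_i} = \vs_i q_i$ is precisely condition \eqref{bar1}; from these, $\ov{\vs_i \pi_i q_i} = \vs_i \pi_i q_i$ follows using $\ov{\pi_i} = \pi_i$ and $\pi_i^2 = 1$.

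For property (2), the plan is to invoke the closed-form expansion formulas for the $\imath^\pi$-divided powers established in \cite[\S3.3--3.7]{C19}. These rewrite $B_{i,\even/\odd}^{(n)}$ as an $\A^\pi$-integral combination of monomials of the form $F_i^{(a)} E_i^{(b)}$ times certain $\tK_i^{\pm 1}$- and $\tJ_i$-factors, with $a+b\le n$. Upon right multiplication by $\one_\lambda$ whose parity matches the chosen divided power, the $\tK_i$-factors produce integral powers of $q$ and the $\tJ_i$-factors collapse to integral scalars, so that the leading term reduces exactly to $F_i^{(n)}\one_\lambda$ and the remaining contributions lie in $\sum_{a<n}F_i^{(a)} \,{}_\A\UU^+\,\one_\lambda$. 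In the case $\tau i \ne i$, the argument is more direct: expand $B_i^n$ and push $E_{\tau i}\tK_i^{-1}$ to the right of each $F_i$ using the commutation relations; the leading term is $F_i^n$, the subleading terms have at least one factor of $E_{\tau i}$, and division by $[n]_i^!$ yields the required decomposition since the $\A$-form $_\A\UU$ is closed under the operation of taking divided powers.

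The main obstacle will be the bookkeeping for the $\tau i = i$ case in property (2): extracting from the expansion formulas of \cite{C19} the precise cancellation that produces coefficient $1$ for $F_i^{(n)}$ and $\A$-integral coefficients for the lower-order tails, while carefully tracking the $\pi$-exponents and $\tJ_i$-contributions that are absent in the $\pi=1$ treatment of Bao--Wang \cite{BW18c}. The matching of the parity of $\langle i,\lambda\rangle$ with the choice $B_{i,\even}^{(n)}$ versus $B_{i,\odd}^{(n)}$ is precisely the mechanism ensuring that the $\tJ_i$-terms evaluate to integral scalars rather than obstructing integrality.
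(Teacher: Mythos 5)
Your proposal is correct and follows exactly the route the paper intends: the paper states this proposition as a direct consequence of the $\imath^\pi$-divided powers of \S\ref{subsec:piDP}, with property (1) following from \propref{prop:bar} together with condition \eqref{bar1} (giving $\ov{\vs_iq_i}=\vs_iq_i$ and bar-invariance of the $(q,\pi)$-integers), and property (2) from the closed-form expansion formulas of \cite[\S3.3--3.7]{C19}, with the parity of $\langle i,\lambda\rangle$ dictating the choice of $B^{(n)}_{i,\even}$ versus $B^{(n)}_{i,\odd}$. Your case split, the $\ipsi$-invariance check, and the reduction of integrality to the expansion formulas all match the intended argument.
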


The elements $ B^{(n)}_{i, \zeta} $ can be thought of as the `leading term' of the $\imath$-canonical basis elements in Proposition~\ref{prop:iCBUi} later.

\begin{definition}\label{def:AUi}
	Let  $\aAp \UUidot$ be the $\A$-subalgebra of ${}_\A\UUidot$ generated by the $\imath^\pi$-divided powers $B^{(n)}_{i, \zeta} \,(i \in \I)$
	for all $n \ge 1$ and $\zeta \in X_\imath$.
\end{definition}

Recall for $\lambda \in X$, we denote by $M(\lambda)$ the Verma module of highest weight $\lambda$ (see \cite[Section~2.6]{CHW13}). We denote the highest weight vector by $\eta_\lambda$. The following is an analogue of \cite[Lemma~6.3]{BW18c}.

\begin{lem}
	\label{lem:surjA1}Let $(M, B(M))$ be a based $\UU$-module. Let   $\la \in X$. Then, 
	\begin{enumerate}
		\item
		for $b \in B(M)$, the $\Q(q)$-linear map 
		$
		\pi_b: \UUidot {\bf 1}_{\ov{|b|+\la}} \longrightarrow M \otimes M(\la),  \;
		u \mapsto u(b \otimes \eta_\lambda)),
		$
		restricts to an $\A$-linear map
		$\pi_b: \aAp \UUidot {\bf 1}_{\ov{|b|+\la}} \longrightarrow \aA M \otimes_\A \aM(\la)$;
		%,   \quad u \mapsto u(b \otimes 1).$
		
		\item
		we have $\sum_{b \in B(M)}  \pi_b (\aAp \UUidot {\bf 1}_{\ov{|b|+\la}})= \aA M \otimes_\A \aM(\la)$.
	\end{enumerate}
\end{lem}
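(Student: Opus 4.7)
The plan is to imitate the proof of \cite[Lemma~6.3]{BW18c}, using Proposition~\ref{prop:ipiDP} (the leading-term property of the $\imath^\pi$-divided powers) as the $\UUi$-side lifting of $F_i^{(n)}$, together with Lemma~\ref{lem:2.2-BW16} as the base case. The key simplification throughout is that under $\Delta(E_i)=E_i\otimes 1+\tJ_i\tK_i\otimes E_i$, any positive-degree element of $\UU^+$ acting on $b\otimes\eta_\la$ affects only the first tensor slot, because $\eta_\la$ is annihilated by $\UU^+$.

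For part (1), I would first invoke $\aAp\UUidot\subseteq{}_\A\UUidot$ (from Definition~\ref{def:AUi} together with Proposition~\ref{prop:ipiDP}), so that by Definition~\ref{def:mAUidot} we have $u\cdot\one_{|b|+\la}\in{}_\A\UUdot\one_{|b|+\la}$ for any $u \in \aAp\UUidot\,\one_{\ov{|b|+\la}}$. Writing this element as an $\A$-linear combination of triangular monomials $F_{j_1}^{(a_1)}\cdots F_{j_r}^{(a_r)} E_{k_1}^{(c_1)}\cdots E_{k_s}^{(c_s)}\one_{|b|+\la}$ (absorbing $\UU^0$ into scalars on the idempotent) and applying to $b\otimes\eta_\la$, the $E$-part produces $b':=E_{k_1}^{(c_1)}\cdots E_{k_s}^{(c_s)}\cdot b\in\aA M$ (by integrability of the based module $M$), and Lemma~\ref{lem:2.2-BW16}(1), extended $\A$-linearly, then delivers $F_{j_1}^{(a_1)}\cdots F_{j_r}^{(a_r)}(b'\otimes\eta_\la)\in\aA M\otimes_\A\aM(\la)$.

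For part (2), by Lemma~\ref{lem:2.2-BW16}(2) it suffices to show every element $F_{j_1}^{(a_1)}\cdots F_{j_k}^{(a_k)}(b\otimes\eta_\la)$ lies in $S:=\sum_{b'}\pi_{b'}(\aAp\UUidot\,\one_{\ov{|b'|+\la}})$; I would induct on the total $F$-degree $N=\sum_\ell a_\ell$, with the base case $N=0$ giving $b\otimes\eta_\la=\pi_b(\one_{\ov{|b|+\la}})\in S$. For the inductive step, substitute the expansion
\[
F_i^{(n)}\one_\mu \;=\; B_{i,\ov\mu}^{(n)}\one_\mu \;-\; \sum_{a<n} F_i^{(a)}\, y_a\, \one_\mu, \qquad y_a\in\aA\UU^+,
\]
from Proposition~\ref{prop:ipiDP}(2) into the leftmost factor of the monomial. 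The $B_{i,\ov\mu}^{(n)}$-contribution is absorbed into $S$, since $S$ is stable under the left action of $\aAp\UUidot$ and the remaining $F$-string, of total degree $N-n<N$, already lies in $S$ by the inductive hypothesis. Each correction term is handled by triangularizing $y_a F_{j_2}^{(a_2)}\cdots F_{j_k}^{(a_k)}$ via the $(q,\pi)$-commutator (R6) into $\aA\UU^-\aA\UU^0\aA\UU^+$ form; the $\UU^+$-part acts on the first tensor slot alone, yielding some $b''\in\aA M$ (scaled by an $\A$-valued weight scalar from $\UU^0$), while the residual $\UU^-$-monomial combined with the prefactor $F_i^{(a)}$ ($a<n$) has strictly smaller total $F$-degree. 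Expanding $b''$ in the basis $B(M)$ and invoking the induction hypothesis then places the correction term in $S$.

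The main technical obstacle is in part (2): verifying that triangularizing $y_a F_{j_2}^{(a_2)}\cdots F_{j_k}^{(a_k)}$ via repeated applications of (R6) produces only $\UU^0$-scalars in the commutators (never new $F$'s), so that the total $F$-degree strictly decreases at each step of the reduction. This is the direct $(q,\pi)$-analogue of the non-covering Kac--Moody argument, with the $\pi$- and $\tJ$-signs from (R6) carried through consistently.
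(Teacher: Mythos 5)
Your proof is correct, and for part (1) it coincides with the paper's (which simply invokes $\aAp\UUidot\subset{}_\A\UUidot$ and Definition~\ref{def:mAUidot}, leaving the coproduct-integrality point you spell out to the citation in Lemma~\ref{lem:2.2-BW16}). For part (2) the key input is the same — Proposition~\ref{prop:ipiDP}(2) lets you trade $F_i^{(n)}$ for $B^{(n)}_{i,\zeta}$ up to lower-order corrections, and one inducts on $F$-degree — but the bookkeeping differs. The paper works with the spanning set of pure tensors $b\otimes\big(F_{i_1}^{(a_1)}\cdots F_{i_n}^{(a_n)}\eta_\la\big)$ and inducts on the filtration degree of the Verma factor, using that $\Delta\big(B^{(a_1)}_{i_1,\zeta}\big)$ has the form $1\otimes F^{(a_1)}_{i_1}+(\text{lower filtration terms})$; with this choice every correction term automatically lands in $\aA M\otimes_\A\aM(\la)_{\le N-1}$ and is absorbed by the inductive hypothesis, so no commutation of $\UU^+$ past $\UU^-$ is ever needed. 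You instead span by diagonal-action monomials $F_{j_1}^{(a_1)}\cdots F_{j_k}^{(a_k)}(b\otimes\eta_\la)$ via Lemma~\ref{lem:2.2-BW16}(2), which forces the triangularization step you flag as the main obstacle. That step does go through: commuting elements of ${}_\A\UU^+$ past divided powers of the $F_i$ via (R6) (or its divided-power form, cf.\ the rank-one identity quoted in \S4) only produces $\UU^0$-factors, which act as $\A^\pi$-scalars on the idempotent, so the total $F$-degree strictly decreases and your induction closes. The paper's pure-tensor formulation is the slicker of the two, but both are valid.
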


\begin{proof}
	Recall $ \aAp \UUidot \subset  {}_\A\UUidot$. Part (1) follows from Definition~\ref{def:mAUidot}.
	Part (2) is proven in the same way as \emph{loc. cit.} By part (1) we have  $\sum_{b \in B(M)}  \pi_b (\aAp \UUidot {\bf 1}_{\ov{|b|+\la}}) \subset \aA M \otimes_\A \aM(\la)$, and ${}_\A\UU^-$ has the increasing filtration 
	$$
	\A = {}_\A\UU^-_{\le 0} \subseteq {}_\A\UU^-_{\le 1} \subseteq \cdots \subseteq {}_\A\UU^-_{\le N} \subseteq \cdots
	$$
	where ${}_\A\UU^-_{\le N}$ is the $\A$-span of $\{F_{i_1}^{(a_1)}\ldots F_{i_n}^{(a_n)}   
	| a_1+\ldots + a_n\le N, i_1, \ldots, i_n \in I\}$, which induces an  increasing filtration $\{ \aM(\la)_{\le N} \}$ on $\aM(\la)$. 
	
	We can prove by induction on $N$ that ${}_\A M \otimes_\A \aM(\la)_{\le N} \subset \sum_{b \in B(M)}  \pi_b (\aAp \UUidot {\bf 1}_{\ov{|b|+\la}})$:
	
	Let $b \otimes \big(F_{i_1}^{(a_1)}\ldots F_{i_n}^{(a_n)} \eta_{\lambda} \big) \in {}_\A M \otimes_\A \aM(\la)_{\le N}$. 
	
	Now $ \Delta (B^{(a_1)}_{i_1, \zeta}) $ has the form  $ 1 \otimes F^{(a_1)}_{i_1, \zeta} + $ terms lower in filtration degree and so by Theorem~\ref{prop:ipiDP} and appropriate $\zeta \in X^\imath$ cf. \cite[Lemma 2.2]{BW16}, we have 
	\[
	B^{(a_1)}_{i_1, \zeta} \Big( b \otimes \big(F_{i_2}^{(a_2)}\ldots F_{i_n}^{(a_n)} \eta_{\lambda} \big) \Big) \in b \otimes \big(F_{i_1}^{(a_1)}\ldots F_{i_n}^{(a_n)} \eta_{\lambda} \big)  +  {}_\A M \otimes_\A \aM(\la)_{\le N-1}.
	\]
	The lemma follows.
\end{proof}

For $\la \in X^+$, we abuse the notation and denote also by $\eta_\la$ the image of $\eta_\lambda$ under the projection $p_\la: M(\la) \rightarrow L(\la)$. 
Note that $p_\la$ restricts to $p_\la: \aM(\la) \rightarrow \aL(\la)$. The next corollary follows from Lemma~\ref{lem:surjA1}. 

\begin{cor}
	\label{cor:surjA2}
	Let $\la \in X^+$, and let $(M, B(M))$ be a based $\UU$-module. Then, 
	\begin{enumerate}
		\item
		for $b \in B(M)$, the $\Q(q)$-linear map 
		$
		\pi_b: \UUi {\bf 1}_{\ov{|b|+\la}} \longrightarrow M \otimes L(\la),  
		\; u \mapsto u(b \otimes \eta_\la)$, restricts to an $\A$-linear map
		$\pi_b: \aAp \UUi {\bf 1}_{\ov{|b|+\la}} \longrightarrow \aA M \otimes_\A \aL(\la)$; 
		
		\item
		we have $\sum_{b \in B(M)}  \pi_b (\aAp \UUi {\bf 1}_{\ov{|b|+\la}})= \aA M \otimes_\A \aL(\la)$.
	\end{enumerate}
\end{cor}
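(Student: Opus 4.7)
The plan is to deduce the corollary directly from \lemref{lem:surjA1} by means of the canonical projection $p_\la: M(\la) \twoheadrightarrow L(\la)$, which restricts to a surjective $\A$-linear map $p_\la: \aM(\la) \twoheadrightarrow \aL(\la)$ sending the highest weight vector to the highest weight vector (this surjectivity follows from the very definition $\aL(\la) = {}_\A\UU^- \eta_\la$). Since $p_\la$ is a morphism of $\UU$-modules, the induced map $\id_M \otimes p_\la : M \otimes M(\la) \twoheadrightarrow M \otimes L(\la)$ is a $\UU$-module homomorphism, and restricts to a surjective $\A$-linear map $\aA M \otimes_\A \aM(\la) \twoheadrightarrow \aA M \otimes_\A \aL(\la)$.

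For part (1), denote by $\pi_b^{M(\la)}$ the map of \lemref{lem:surjA1} and by $\pi_b^{L(\la)}$ the map of the corollary. Because $p_\la$ intertwines the $\UU$-actions and sends $\eta_\la$ to $\eta_\la$, we have the factorization
\[
\pi_b^{L(\la)} = (\id_M \otimes p_\la) \circ \pi_b^{M(\la)}
\]
as $\Q(q)$-linear maps from $\UUi \one_{\ov{|b|+\la}}$. By \lemref{lem:surjA1}(1), $\pi_b^{M(\la)}$ sends $\aAp \UUi \one_{\ov{|b|+\la}}$ into $\aA M \otimes_\A \aM(\la)$, and then $\id_M \otimes p_\la$ sends this image into $\aA M \otimes_\A \aL(\la)$, establishing (1).

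For part (2), I would apply the surjective $\A$-linear map $\id_M \otimes p_\la$ to both sides of the identity furnished by \lemref{lem:surjA1}(2): starting from $\sum_{b \in B(M)} \pi_b^{M(\la)}(\aAp \UUi \one_{\ov{|b|+\la}}) = \aA M \otimes_\A \aM(\la)$, the factorization above together with the surjectivity of $\id_M \otimes p_\la$ yields $\sum_{b \in B(M)} \pi_b^{L(\la)}(\aAp \UUi \one_{\ov{|b|+\la}}) = \aA M \otimes_\A \aL(\la)$, as desired.

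I do not expect any real obstacle here: the proof is essentially a formal passage from the Verma module $M(\la)$ to its simple quotient $L(\la)$ via the projection $p_\la$, and all of the substantive work (namely, the existence of the $\imath^\pi$-divided powers $B^{(n)}_{i,\zeta}$ of \propref{prop:ipiDP} and the ensuing integrality/surjectivity statements for $M \otimes M(\la)$) has already been carried out in \lemref{lem:surjA1}. The only point worth double-checking is that $p_\la$ indeed restricts to an $\A$-module surjection $\aM(\la) \twoheadrightarrow \aL(\la)$, which is immediate from the definitions of the integral forms in \S\ref{subsec:UAmodified}.
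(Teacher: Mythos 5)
Your proof is correct and matches the paper's approach exactly: the paper likewise derives the corollary from Lemma~\ref{lem:surjA1} by passing through the projection $p_\la: \aM(\la) \to \aL(\la)$ (stating only "the next corollary follows from Lemma~\ref{lem:surjA1}"), and your write-up simply makes the factorization $\pi_b^{L(\la)} = (\id_M \otimes p_\la)\circ \pi_b^{M(\la)}$ and the surjectivity argument explicit.
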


%%%%%
\subsection{Integrality of actions of $\Upsilon$}

\subsubsection{}
Just as in \emph{loc. cit.}, the quasi-$K$-matrix $\Upsilon \in \widehat{\UU}^+$ induces a well-defined $\Q(q)$-linear map on $M \otimes L(\la)$: 
\begin{equation}
\label{eq:K-ML}
\Upsilon: M \otimes L(\la) \longrightarrow M \otimes L(\la),
\end{equation}
for any $\la \in X^+$ and any weight $\UU$-module $M$ whose weights are bounded above. %; cf. \cite[24.1.1]{Lu94}.

Recall \cite[\S5.1]{BW18b} that a $\UUi$-module $M$ equipped with an anti-linear involution $\ipsi$ 
is called {\em involutive} (or {\em $\imath$-involutive}) if
$$
\ipsi(u m) = \ipsi(u) \ipsi(m),\quad \forall u \in \UUi, m \in M.
$$

\begin{prop}
	\label{prop:compatibleBbar}
	Let $(M,B)$ be a based $\UU$-module whose weights are bounded above. We denote the bar involution on $M$ by $\psi$. Then $M$ is an $\imath$-involutive  
	$\UUi$-module with involution
	\begin{equation}
	\label{ibar}
	\ipsi := \Upsilon \circ \psi.
	\end{equation}
\end{prop}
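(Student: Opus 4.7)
The proof is a direct consequence of the intertwining property of $\Upsilon$ (Theorem~\ref{thm:Upsilon}) together with the inversion formula $\ov{\Upsilon}\cdot\Upsilon = 1$. My plan is to verify the two defining properties of an involutive $\UUi$-module in turn.

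First, I would check that $\ipsi = \Upsilon \circ \psi$ is well-defined as a $\K$-linear map on $M$. Since $\psi$ is the anti-linear bar involution on $M$ (compatible with $\psi$ on $\UU$) and $\Upsilon = \sum_\mu \Upsilon_\mu \in \widehat{\UU}^+$ with $\Upsilon_\mu \in \UU_\mu^+$, the sum $\Upsilon \cdot m = \sum_\mu \Upsilon_\mu \cdot m$ is a finite sum for each $m \in M$ because the weights of $M$ are bounded above and $\Upsilon_\mu$ raises weight by $\mu \in \N[I]$. Hence $\ipsi$ is a well-defined anti-linear map on $M$.

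Next, I would verify $\ipsi$ is an involution. Using that $\psi$ is an involution on $M$ with $\psi(um) = \psi(u)\psi(m)$ for $u \in \UU, m \in M$, we compute
\[
\ipsi^2(m) = \Upsilon \cdot \psi\bigl(\Upsilon \cdot \psi(m)\bigr) = \Upsilon \cdot \psi(\Upsilon) \cdot \psi^2(m) = \Upsilon \cdot \ov{\Upsilon}\cdot m = m,
\]
where the last equality uses $\ov{\Upsilon}\cdot \Upsilon = 1$ from the corollary to Theorem~\ref{thm:Upsilon} (and equivalently $\Upsilon \cdot \ov{\Upsilon} = 1$ since $\ov{\Upsilon} = \Upsilon^{-1}$ in $\widehat{\UU}$).

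Finally, I would check compatibility with the $\UUi$-action. For $u \in \UUi$ and $m \in M$, using that $M$ is a $\psi$-involutive $\UU$-module,
\[
\ipsi(u\cdot m) = \Upsilon \cdot \psi(u \cdot m) = \Upsilon \cdot \psi(u) \cdot \psi(m) = \ipsi(u) \cdot \Upsilon \cdot \psi(m) = \ipsi(u)\cdot \ipsi(m),
\]
where the crucial third equality is the defining intertwining identity $\Upsilon \cdot \psi(u) = \ipsi(u)\cdot \Upsilon$ of Theorem~\ref{thm:Upsilon}. This establishes the $\imath$-involutive property and completes the proof. No step here is a real obstacle since all the hard work was done in constructing $\Upsilon$ and proving its intertwining and inversion properties in the previous section.
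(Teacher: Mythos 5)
Your proof is correct and follows essentially the same route as the paper: well-definedness of the action of $\Upsilon$ from the weights being bounded above, and the compatibility $\ipsi(um)=\ipsi(u)\ipsi(m)$ via the intertwining identity of Theorem~\ref{thm:Upsilon}. The explicit check that $\ipsi^2=\mathrm{id}$ using $\ov{\Upsilon}\cdot\Upsilon=1$ is a detail the paper leaves implicit, so including it is a small but genuine improvement in completeness rather than a different approach.
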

%%%%%%%%%%%%%%%%%%%%%%%%%%%%%%%%%%%%%%%%%%%%%%
\begin{proof}
	Just as in \cite{BW18c}, since the weights of $M$ are bounded above, the action of $\Upsilon : M \rightarrow M$ is well defined. The rest of the argument is analogous to the one found in the proof of \cite[Proposition~5.1]{BW18b} (also \cite[Proposition~3.10]{BW18a}): using Theorem~\ref{thm:Upsilon}, we have
	\begin{equation*}
	\ipsi(um) = \Upsilon \psi(um) = \Upsilon \psi(u)\psi(m) = \ipsi(u) \Upsilon \psi(m) = \ipsi(u) \ipsi(m)
	\end{equation*}
	as required.
\end{proof}

\subsubsection{} 

Let $(M,B)$ be a based $\UU$-module whose weights are bounded above. Assume $\Upsilon: M\rightarrow M$ preserves the $\A$-submodule $\aM$. 
\begin{prop}
	\label{prop:quasi-KZ}
	The $\Q(q)$-linear map $\ipsi := \Upsilon \circ \psi$ preserves the $\A$-submodule $\aM \otimes_\A \aL(\la)$, for any $\la\in X^+$. 
\end{prop}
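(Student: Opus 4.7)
The plan is to reduce the statement to checking integrality on a convenient spanning set of $\aM \otimes_\A \aL(\la)$ using Corollary~\ref{cor:surjA2}. Part (2) of that corollary expresses any element of $\aM \otimes_\A \aL(\la)$ as an $\A$-linear combination of vectors of the form $u(b \otimes \eta_\la)$ with $b \in B(M)$ and $u \in \aAp \UUi$. Since $M \otimes L(\la)$ is a based $\UU$-module (Proposition~\ref{prop:CBtensor}) whose weights are bounded above, Proposition~\ref{prop:compatibleBbar} applies and gives
\[
\ipsi\bigl(u(b \otimes \eta_\la)\bigr) = \ipsi(u)\,\ipsi(b \otimes \eta_\la).
\]
Thus the proof reduces to two claims: (i) $\ipsi(u) \in \aAp \UUi$, and (ii) $\ipsi(b \otimes \eta_\la) \in \aM \otimes_\A \aL(\la)$. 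Granting (ii), the algebra $\aAp \UUi$ preserves $\aM \otimes_\A \aL(\la)$ by a standard bootstrap from Corollary~\ref{cor:surjA2}(1) (expand any vector in $\aM \otimes_\A \aL(\la)$ as $\sum u_i(b_i \otimes \eta_\la)$ with $u_i \in \aAp \UUi$ and multiply), which then finishes the argument.

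Claim (i) is quick: $\aAp \UUi$ is by Definition~\ref{def:AUi} the $\A^\pi$-subalgebra generated by the $\imath^\pi$-divided powers $B^{(n)}_{i,\zeta}$, each of which is $\ipsi$-invariant by Proposition~\ref{prop:ipiDP}(1), and $\ipsi$ restricts to an involution of $\A^\pi$, so $\ipsi(\aAp \UUi) = \aAp \UUi$.

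Claim (ii) is where the real content lies. By definition $\ipsi(b \otimes \eta_\la) = \Upsilon \cdot \psi(b \otimes \eta_\la)$, where $\psi$ denotes the tensor-product bar involution on $M \otimes L(\la)$. This bar involution is built from $\psi_M \otimes \psi_{L(\la)}$ twisted by the quasi-$\mathcal R$-matrix $\Theta$ of \S\ref{subsec:bar+rmatrix}. Because $b \in B(M)$ and the highest weight vector $\eta_\la$ are both fixed by their respective bar involutions, and because every $\Theta_\mu$ with $\mu \neq 0$ lies in $\UU^-_\mu \otimes \UU^+_\mu$ whose second tensor factor annihilates $\eta_\la$, only the $\Theta_0 = 1 \otimes 1$ term contributes, yielding $\psi(b \otimes \eta_\la) = b \otimes \eta_\la$. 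Next, for each $\Upsilon_\mu \in \UU^+_\mu$ the standard triangularity of $\Delta$ on $\UU^+$ gives
\[
\Delta(\Upsilon_\mu) \in \Upsilon_\mu \otimes 1 \;+\; \sum_{\nu > 0}\UU^+ \UU^0 \otimes \UU^+_\nu,
\]
and the summands on the right annihilate $b \otimes \eta_\la$ because $\UU^+_\nu \eta_\la = 0$ for $\nu > 0$. Hence $\Upsilon_\mu \cdot (b \otimes \eta_\la) = (\Upsilon_\mu b) \otimes \eta_\la$, and summing over $\mu$ we obtain $\ipsi(b \otimes \eta_\la) = (\Upsilon b) \otimes \eta_\la$, which lies in $\aM \otimes_\A \aL(\la)$ since $\Upsilon$ preserves $\aM$ by hypothesis.

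The main subtlety to handle carefully is the interplay between the infinite sum $\Upsilon = \sum_\mu \Upsilon_\mu$ and the completed action on $M \otimes L(\la)$; however, since weight considerations collapse the relevant sums on any given vector $b \otimes \eta_\la$ to effectively finite combinations (as $\eta_\la$ is the highest weight), no analytic convergence issues arise beyond those already addressed in \S\ref{sec:quasiKmatrix}.
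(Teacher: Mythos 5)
Your argument is correct and follows essentially the same route as the paper: expand $x\in\aM\otimes_\A\aL(\la)$ via Corollary~\ref{cor:surjA2}(2) as $\sum_k u_k(b_k\otimes\eta_\la)$ with $u_k\in\aAp\UUidot$, use $\imath$-involutivity, the $\ipsi$-stability of $\aAp\UUidot$, and the computation $\ipsi(b_k\otimes\eta_\la)=(\Upsilon b_k)\otimes\eta_\la$ (only $\Theta_0$ and the $\Upsilon\otimes 1$ term of $\Delta(\Upsilon)$ survive against $\eta_\la$). The only difference is that the paper's proof also records the stronger fact that $\psi=\Theta\circ(\ov{\phantom{x}}\otimes\ov{\phantom{x}})$ preserves the whole lattice, which is not needed for this proposition itself but is invoked later in Corollary~\ref{cor:Zform}.
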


\begin{proof}
	The proof is again very similar: the $\UU$-module $M\otimes L(\la)$ is involutive with the involution $\psi:=\Theta \circ (\ov{\phantom{x}} \otimes \ov{\phantom{x}})$ where $\Theta$ is the quasi-$\mathcal R$-matrix from Proposition~\ref{prop:rmatrix}. It follows by an argument similar to \cite[Proposition~2.4]{BW16} that $\psi$ preserves the $\A$-submodule $\aM \otimes_\A \aL(\la)$: The statement is that for $ \lambda \in X^+ $ and $ (M, B(M)) $ be a based $ \UU $-module, the $ \Q(q) $-linear map
	$$ \Theta : M \otimes L(\lambda) \to M \otimes L(\lambda) $$
	preserves the $ \A $-submodule $ {}_\A M \otimes_\A {}_\A L(\lambda) $.
	
	We will write $ \overline{\phantom{B}} $ for $ \overline{\phantom{B}} \otimes \overline{\phantom{B}} $, which preserves the $ \A $-lattice $ {}_\A M \otimes_\A {}_\A L(\lambda) $. Thus, any $ x \in {}_\A M \otimes_\A {}_\A L(\lambda) $ can be recognized as $ x = \overline{x'} $ for some $ x' \in {}_\A M \otimes_\A {}_\A L(\lambda) $. By Lemma~\ref{lem:2.2-BW16}, $ x' = \sum_i \pi_{b_i}(u_i') $ (a finite sum), for some $ b_i \in B(M) $ and $ u_i' \in {}_\A \UU^- \one_{|b_i|+\lambda} $. Since $ {}_\A \UU^- \one_{|b_i|+\lambda} $ is preserved by the bar involution on $ \UUdot $, we have $ u_i' = \overline{u_i} $ for some $ u_i \in {}_\A \UU^- \one_{|b_i|+\lambda} $. Hence,
	$$
	x = \overline{x'} = \sum_i \overline{\overline{u_i}(b_i \otimes \eta_\lambda)}.
	$$	
	
	Using the property of the quasi-$\mathcal R$-matrix in Proposition~\ref{prop:rmatrix}, we have
	% Key property (see also \cite[Theorem~1]{Cl14})
	$$
	u \Theta(m \otimes m') = \Theta (\overline{\bar{u}(\bar{m} \otimes \bar{m'})}),
	$$
	for $ u \in \UUdot $, $ m \in M $ and $ m' \in L(\lambda) $. Taking $ m = b_i = \overline{b_i} $ and $ m' = \eta_\lambda = \overline{\eta_\la} $, this gives
	$$
	u(b_i \otimes \eta_\la) = \Theta (\overline{\bar{u}(\bar{b_i} \otimes \bar{\eta_\la})})
	$$
	since $ \Theta (\bar{b_i} \otimes \bar{\eta_\la}) = \bar{b_i} \otimes \bar{\eta_\la} $ (by construction, $ \Theta $ lies in a completion of $ \UU^- \otimes \UU^+ $), we have that
	$$
	\Theta(x) = \sum_i \Theta (\overline{\bar{u}(\bar{b_i} \otimes \bar{\eta_\la})}) = u_i (b_i \otimes \eta_\la) = \sum_i \pi_{b_i} (u_i),
	$$
	where the latter lies in $ {}_\A M \otimes_\A {}_\A L(\lambda) $ by Lemma~\ref{lem:2.2-BW16}, which completes the proof.
	
	Regarded as $\UUi$-module $M\otimes L(\la)$ is $\imath$-involutive with the involution $\ipsi:=\Upsilon \circ \psi$. 
	We can now prove that $\ipsi$ preserves the $\A$-submodule $\aM \otimes_\A \aL(\la)$.
	
	By  Corollary~\ref{cor:surjA2}(2), for any $x \in \aM \otimes_\A \aL(\la)$, we can write $x=\sum_k u_k (b_k\otimes \eta_\la)$, for $u_k \in \aAp \UUidot$ and $b_k \in B$. %; note we have replace $B$ by $B^\imath$ here thanks to Theorem~\ref{thm:iCBmodule}. 
	Since $M \otimes L(\la)$ is $\imath$-involutive, we have
	\begin{align}
	\label{eq:psipsi}
	\ipsi (x) =\sum_k \ipsi (u_k) \ipsi (b_k \otimes \eta_\la) 
	=\sum_k \ipsi (u_k) \Upsilon \psi (b_k \otimes \eta_\la)
	%=\sum_k \ipsi (u_k) \Upsilon  (b_k \otimes \eta_\la)
	=\sum_k \ipsi (u_k) (\Upsilon  b_k \otimes \eta_\la),
	\end{align}
	where we have used the fact that $\Delta (\Upsilon) \in \Upsilon \otimes 1 + \UU \otimes \UU^+_{>0}$ and $\psi(b_k \otimes \eta_\la) = \Theta(b_k \otimes \eta_\la) = b_k \otimes \eta_\la $ since $ \Theta $ is the sum of terms $ \Theta_\nu \in \UU_\nu^- \otimes \UU_\nu^+ $ and $ \Theta_0 = 1 \otimes 1 $. 
	By assumption we have $\Upsilon  b_k \in \aM$ and it follows by definition of $\aAp \UUidot$ that $\ipsi (u_k) \in  {}_\A\UUidot$. Applying Corollary~\ref{cor:surjA2}(2) again to \eqref{eq:psipsi}, we obtain that $\ipsi (x) \in \aM \otimes_\A \aL(\la)$. The proposition follows. 
\end{proof}

\begin{cor}
	\label{cor:Zform}
	The intertwiner $\Upsilon$ preserves the $\A$-submodule $\aM \otimes_\A \aL(\la)$. In particular, $\Upsilon$ preserves the $\A$-submodule $\aL(\la)$ of $L(\la)$.
\end{cor}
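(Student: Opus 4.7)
The plan is to exploit the factorization $\Upsilon = \ipsi \circ \psi$, valid on any $\imath$-involutive $\UUi$-module. This identity follows immediately from the definition $\ipsi = \Upsilon \circ \psi$ given in Proposition~\ref{prop:compatibleBbar} together with $\psi^2 = 1$. Once this factorization is in hand, the preservation of the $\A$-lattice by $\Upsilon$ decomposes cleanly into two preservation statements that have essentially already been established in the previous section.

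First I would invoke Proposition~\ref{prop:quasi-KZ} directly: under its standing hypothesis that $\Upsilon \colon M \to M$ preserves $\aM$, it asserts precisely that $\ipsi$ preserves $\aM \otimes_\A \aL(\la)$. Second, I would isolate from the body of the proof of Proposition~\ref{prop:quasi-KZ} the auxiliary assertion that the tensor bar involution $\psi = \Theta \circ (\ov{\phantom{x}} \otimes \ov{\phantom{x}})$ preserves $\aM \otimes_\A \aL(\la)$; this is the content of the paragraphs invoking Lemma~\ref{lem:2.2-BW16} together with the structural fact that $\Theta_0 = 1 \otimes 1$ and $\Theta_\nu \in \UU^-_\nu \otimes \UU^+_\nu$ for all $\nu$. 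Composing gives $\Upsilon = \ipsi \circ \psi$ preserves $\aM \otimes_\A \aL(\la)$, which is the first claim.

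For the ``in particular'' statement, I would specialize to $M = \Q(q)^\pi$, the trivial based $\UU$-module with $\aM = \A^\pi$. Each positive weight component $\Upsilon_\mu \in \UU^+_\mu$ annihilates the trivial module, so $\Upsilon$ acts as the identity on $\aM$; consequently the hypothesis of Proposition~\ref{prop:quasi-KZ} is trivially satisfied in this case. The first part of the corollary then specializes to $\Upsilon$ preserves $\A^\pi \otimes_\A \aL(\la) = \aL(\la)$, which is the stated consequence.

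The main obstacle is really just conceptual rather than computational: recognizing that $\Upsilon$ factors as $\ipsi \circ \psi$ and that both factors have already been shown to act integrally (the former by Proposition~\ref{prop:quasi-KZ} itself and the latter by the quasi-$\mc R$-matrix argument hidden inside its proof). Once this decomposition is made explicit, no further computation with the explicit form of $\Upsilon$ from Section~\ref{sec:quasiKmatrix} or with $\imath^\pi$-divided powers is required.
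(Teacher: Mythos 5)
Your proposal is correct and follows essentially the same route as the paper: the paper's proof is exactly the factorization $\Upsilon = \ipsi\circ\psi$ combined with Proposition~\ref{prop:quasi-KZ} and the integrality of $\psi$ established inside that proposition's proof. Your explicit treatment of the ``in particular'' clause via the trivial based module is a reasonable way to fill in a detail the paper leaves implicit.
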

\begin{proof}
	Recall $\Upsilon =\ipsi \circ \psi$. The corollary follows from Proposition~\ref{prop:quasi-KZ} and the fact that $\psi$ preserves the $\A$-submodule $\aM \otimes_\A \aL(\la)$.
\end{proof}

\begin{cor} 
	\label{cor:tensor}
	Let $\lambda_i \in X^+$ for $1 \le i \le \ell$.  The involution $\ipsi$ on the $\imath$-involutive $\UUi$-module $L(\la_{1})\otimes \ldots \otimes L(\la_\ell)$ preserves the $\A$-submodule ${}_\A L(\la_{1})\otimes_{\A} \ldots \otimes_{\A} {}_\A L(\la_\ell)$.
\end{cor}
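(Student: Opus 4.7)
The plan is to prove the corollary by induction on $\ell$, using Proposition~\ref{prop:quasi-KZ} as the inductive engine. For $\ell = 1$, the statement reduces to the claim that $\ipsi$ preserves ${}_\A L(\la_1)$, which follows directly from Corollary~\ref{cor:Zform} (taking the first factor $M$ to be the trivial based module, or just observing that the argument of Proposition~\ref{prop:quasi-KZ} applies since $\Upsilon$ trivially preserves the base ring).

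For the inductive step, set $M := L(\la_1) \otimes \cdots \otimes L(\la_{\ell-1})$, which is a based $\UU$-module with weights bounded above: the based module structure is given by the iterated application of Proposition~\ref{prop:CBtensor}, and the highest weight is $\sum_{i=1}^{\ell-1} \la_i$. Let $\aM := {}_\A L(\la_1) \otimes_\A \cdots \otimes_\A {}_\A L(\la_{\ell-1})$ be the associated $\A$-form. By the induction hypothesis, $\ipsi$ preserves $\aM$. The key observation is then that the bar involution $\psi = \Theta \circ (\ov{\phantom{x}} \otimes \cdots \otimes \ov{\phantom{x}})$ on $M$ also preserves $\aM$, by an iterated application of the argument given inside the proof of Proposition~\ref{prop:quasi-KZ} (the one using $\Theta$ and Lemma~\ref{lem:2.2-BW16}). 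Since $\psi$ is an involution, we may write $\Upsilon = \ipsi \circ \psi$, and hence $\Upsilon$ preserves $\aM$.

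With this integrality of the $\Upsilon$-action on $\aM$ in hand, we are exactly in the hypothesis of Proposition~\ref{prop:quasi-KZ} applied to the based module $(M, B(M))$ and the dominant integral weight $\la = \la_\ell \in X^+$. The proposition then gives that $\ipsi = \Upsilon \circ \psi$ preserves
\[
\aM \otimes_\A {}_\A L(\la_\ell) = {}_\A L(\la_1) \otimes_\A \cdots \otimes_\A {}_\A L(\la_\ell),
\]
completing the inductive step.

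I expect the main subtlety to be the bookkeeping inside the inductive step, namely verifying at each stage that (i) $M$ really is a based $\UU$-module with weights bounded above so that Proposition~\ref{prop:quasi-KZ} applies, and (ii) $\psi$ preserves $\aM$, which is needed in order to deduce integrality of the $\Upsilon$-action from the induction hypothesis. Both points are handled by iterating results already established, namely Proposition~\ref{prop:CBtensor} for the based module structure and the $\Theta$-argument from the proof of Proposition~\ref{prop:quasi-KZ} for the integrality of $\psi$; no new computation with $\Upsilon$ itself is required at the corollary level.
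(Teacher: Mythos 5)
Your proof is correct and takes essentially the same route as the paper, whose entire argument is ``consecutive application of Proposition~\ref{prop:quasi-KZ}''; your induction, including the step of writing $\Upsilon = \ipsi\circ\psi$ and using the $\Theta$-integrality of $\psi$ to verify the hypothesis of that proposition at each stage (exactly as in Corollary~\ref{cor:Zform}), is the correct unpacking of that one-line proof.
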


\begin{proof}
	The module $L(\la_{1})\otimes \ldots \otimes L(\la_\ell)$ is a based $\UU$-module whose weights are bounded above, and so the corollary follows by consecutive application of Proposition~\ref{prop:quasi-KZ}. 
\end{proof}

%	\blue{thefollowing theorem also follows, though since $ \osp(1|2n) $ is the only finite case here and we have already worked it out above, this is less significant in import than in \cite[Remark~6.10]{BW18c}.}

For finite type, we in fact have integrality of $ \Upsilon $ and not just its action.

\begin{thm}\label{thm:int}
	Assume $(\UU, \UUi)$ is of finite type. Write $ \Upsilon =\sum_{\mu} \Upsilon_\mu$. Then we have $\Upsilon_\mu \in \aA \UU^+$ for each $\mu$. 
\end{thm}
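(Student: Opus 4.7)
The plan is to upgrade the integrality of the action of $\Upsilon$ established in Corollary~\ref{cor:Zform} to integrality of the individual weight components $\Upsilon_\mu$. The key additional input in finite type is that each weight space $\UU^+_\mu$ can be identified, at the $\A$-form level, with a weight space of a sufficiently dominant simple integrable $\UU$-module via the action on a lowest weight vector.

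Fix $\mu \in \N[\I]$ and let $w_0 \in W$ denote the longest element of the Weyl group. In finite type I would choose $\la \in X^+$ sufficiently dominant so that $w_0 \la + \mu$ is a weight of $L(\la)$; concretely, it suffices to take $\la$ with $\la - w_0 \la - \mu \in \N[\I]$. Let $\xi \in L(\la)_{w_0 \la}$ be a canonical-basis lowest weight vector, so in particular $\xi \in \aL(\la)$, and consider the evaluation map
\begin{equation*}
\varphi_\la: \UU^+_\mu \longrightarrow L(\la)_{w_0 \la + \mu}, \qquad x \longmapsto x \cdot \xi.
\end{equation*}
The key claim is that for $\la$ sufficiently dominant, $\varphi_\la$ is injective and moreover restricts to an $\A$-linear isomorphism
\begin{equation*}
\varphi_\la:  \aA \UU^+_\mu \xrightarrow{\;\cong\;}  \aL(\la)_{w_0 \la + \mu}.
\end{equation*}
This is the quantum covering analogue of a standard Lusztig-type identification: the action of the canonical basis $\{b^+ : b \in \cB_\mu\}$ of $\UU^+_\mu$ on $\xi$ produces, up to signs and powers of $\pi$, precisely the part of the canonical basis of $L(\la)$ supported in weight $w_0 \la + \mu$. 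The ingredients are all in place: the canonical basis of $\UU^+$ (Proposition~\ref{prop:qcgcb}), the based $\UU$-module structure on $L(\la)$ (Proposition~\ref{prop:CBtensor} and \cite{Cl14}), and the quasi-$\mathcal R$-matrix (Proposition~\ref{prop:rmatrix}).

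Granting this identification, the theorem follows immediately. Since $\xi \in \aL(\la)$, Corollary~\ref{cor:Zform} yields $\Upsilon \cdot \xi \in \aL(\la)$. Expanding $\Upsilon = \sum_\nu \Upsilon_\nu$ and projecting onto the weight space of weight $w_0 \la + \mu$ gives $\Upsilon_\mu \cdot \xi \in \aL(\la)_{w_0 \la + \mu}$, and applying $\varphi_\la^{-1}$ produces $\Upsilon_\mu \in \aA \UU^+_\mu$. Since the weight $\mu$ was arbitrary and $\la$ may be chosen to depend on $\mu$, the theorem follows.

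The main obstacle will be establishing the $\A$-linear part of the isomorphism $\varphi_\la$. The underlying $\K(q)^\pi$-linear injectivity is a straightforward consequence of the finite-dimensionality of $L(\la)$ together with the PBW-type basis; what is delicate is the compatibility of the canonical basis of $\UU^+_\mu$ with the canonical basis of $L(\la)$ under action on $\xi$. Lusztig's original argument for quantum groups uses the quasi-$\mathcal R$-matrix to intertwine the relevant bar involutions together with an asymptotic argument as $\la$ becomes arbitrarily dominant; both tools are available in the quantum covering setting, with the additional bookkeeping of the parameter $\pi$ (and the operator $\tJ$) being the only technical complication.
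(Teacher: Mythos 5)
Your proposal is correct and follows essentially the same route as the paper: apply $\Upsilon$ to the lowest weight vector of $L(\la)$ for $\la \gg 0$, invoke Corollary~\ref{cor:Zform} to see that the result lies in $\aL(\la)$, and use the identification of $\aA\UU^+_\mu$ with the weight space $\aL(\la)_{w_0\la+\mu}$ to pull back integrality to $\Upsilon_\mu$ itself. The paper states this in one line, leaving the $\A$-form identification implicit; you have correctly isolated it as the substantive input, and it is indeed the standard canonical-basis fact available from \cite{CHW14} and \propref{prop:qcgcb}.
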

%%%%%%%%%%%%%%%%%%%%%%%%%%%%%%%%%%%%%%%%%%%%%%
\begin{proof}
	This follows by Corollary~\ref{cor:Zform} and applying $\Upsilon$ to the lowest weight vector $\xi_{-w_0\la} \in {}_\A L(\la)$, for $\la\gg 0$ (i.e., $\la \in X^+$ such that $\langle i, \la \rangle \gg 0$ for each $i$). 
\end{proof}

\section{$\imath$Canonical Basis on modules}
\label{sec:iCBmodules}

We call a $\UUi$-module $M$ a weight $\UUi$-module if $M$ admits a direct sum decomposition $M = \oplus_{\lambda \in X_\imath} M_{\lambda}$ such that, for any $\mu \in Y^\imath$, $\lambda \in X_\imath$, $m \in M_{\lambda}$, we have $K_{\mu} m = q^{\langle \mu, 
	\lambda \rangle} m$. 

We will make the following definition of based $\UUi$-modules (based on \cite[Definition~1]{BWW18}):

\begin{definition}\label{ad:def:1}
	Let $M$ be a weight $\UUi$-module over $\Q(q)^\pi$ with a given $\Q(q)^\pi$-basis $\B^\imath$. The pair $(M, \B^\imath)$ is called a based $\UUi$-module if the following conditions are satisfied:
	\begin{enumerate}
		\item 	$\B^\imath \cap M_{\nu}$ is a basis of $M_{\nu}$, for  any $\nu \in X_\imath$;
		\item 	The $\A$-submodule ${}_{\A}M$ generated by $\B^\imath$ is stable under ${}_{\A}\UUidot$;
		\item 	$M$ is $\imath$-involutive; that is, the $\Q^\pi$-linear involution $\ipsi: M \rightarrow M$ defined by 
		$\ipsi(q)= q^{-1}, \ipsi(b) = b$ for all $b \in \B^\imath$ 
		is compatible with the $\UUidot$-action, i.e., $\ipsi(um) = \ipsi(u) \ipsi(m)$, for all $u\in \UUidot, m\in M$;
		\item  
		Let $\mathbf{A}= \Q[[q^{-1}]]^\pi \cap \Q(q)^\pi$.	Let $L(M)$ be the $\mathbf{A}$-submodule of $M$ generated by $\B^\imath$. Then the image of $\B^\imath$ in $L(M)/ q^{-1}L(M)$ forms a $\Q^\pi$-basis in $L(M)/ q^{-1}L(M)$. % at $\infty$ for $M$.
	\end{enumerate}
\end{definition}	
We shall denote by $\mathcal L(M)$ the $\Z[q^{-1}]^\pi$-span of $\B^\imath$; then $\B^\imath$ forms a $\Z[q^{-1}]^\pi$-basis for $\mathcal L(M)$. We also define based $\UUi$-submodules and based quotient $\UUi$-modules in the obvious way.

By a standard argument using \cite[Lemma~9]{Cl14} (cf. \cite[Lemma~24.2.1]{Lu94}), we have the following generalization of \cite[Theorem~6.12]{BW18c} (cf. \cite[Theorem~5.7]{BW18b}): Recall that the partial order here is the one given by \eqref{eq:leq}, $ \la \leq \la' $ iff $ \la' - \la \in \N[I] $.

\begin{thm}
	\label{thm:piCBmodule}
	Let (M,B) be a based $\UU$-module whose weights are bounded above. Assume the involution $\psi_\imath$ of $M$ from Proposition~\ref{prop:compatibleBbar} preserves the $ \A^\pi $-submodule $ {}_\A M $.
	\begin{enumerate}
		\item The $ \UUi $-module $M$ admits a unique $\pi$-basis (called the $\imath$-canonical basis) $ B^\imath := \{ b^\imath | b \in B \} $, which is $ \ipsi $-invariant and of the form
		\begin{equation}
		b^\imath = b + \sum_{b' \in B, b' < b} t_{b;b'} b', \quad \text{for}\quad t_{b;b'} \in q^{-1} \Z^\pi[q^{-1}].
		\end{equation}
		
		\item $ B^\imath $ forms an $ \A^\pi $-basis for the $ \A^\pi $-lattice  $ {}_\A M $ (generated by $B$), and forms a $ \Z^\pi[q^{-1}] $-basis for the $ \Z^\pi[q^{-1}] $-lattice $ \mathcal{M} $ (generated by $B$).

		\item $( M, B^\imath )$ is a based $ \UUi $-module, where we call $ B^\imath $ the $ \imath $-canonical basis of $M$.
	\end{enumerate}
\end{thm}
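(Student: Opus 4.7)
The strategy is to apply the quantum covering analogue of Lusztig's Lemma (the $\pi$-version of \cite[Lemma~24.2.1]{Lu94} given as \cite[Lemma~9]{Cl14}) to the anti-linear involution $\psi_\imath = \Upsilon \circ \psi$ on $M$ constructed in Proposition~\ref{prop:compatibleBbar}. The first step is to set up the matrix of $\psi_\imath$ with respect to $B$. Since $(M,B)$ is a based $\UU$-module we have $\psi(b) = b$ for each $b \in B$, and hence
\[
\psi_\imath(b) \;=\; \Upsilon(b) \;=\; b \;+\; \sum_{\mu \in \N[I]\setminus\{0\}} \Upsilon_\mu(b).
\]
Each $\Upsilon_\mu$ lies in $\UU^+_\mu$ and therefore raises the $X$-weight by $\mu$. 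Since the weights of $M$ are bounded above, this is a finite sum, and expanding in $B$ gives $\psi_\imath(b) = b + \sum_{b' < b} r_{b;b'}\, b'$, where the partial order on $B$ is the one induced by $|b'| > |b|$. By the hypothesis that $\psi_\imath$ preserves $_\A M$, the coefficients $r_{b;b'}$ lie in $\A^\pi$; combined with the identity $\overline{\Upsilon}\cdot\Upsilon = 1$ from the corollary after Theorem~\ref{thm:Upsilon}, which gives $\psi_\imath^2 = \id$, this exhibits $\psi_\imath$ as a bar-involution that is unitriangular with integral coefficients on the basis $B$.

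The second step is to invoke \cite[Lemma~9]{Cl14}: the data of a $\pi$-compatible anti-linear involution which is bar-unitriangular in a $\pi$-basis with coefficients in $\A^\pi$ produces a unique $\psi_\imath$-invariant $\pi$-basis $B^\imath = \{b^\imath\}$ of the form
\[
b^\imath \;=\; b \;+\; \sum_{b' < b} t_{b;b'}\, b', \qquad t_{b;b'} \in q^{-1}\Z^\pi[q^{-1}],
\]
proving part (1). Since the transition matrix between $B^\imath$ and $B$ is unitriangular with entries in $\Z^\pi[q^{-1}] \subset \A^\pi$, its inverse has the same property, so the two families span the same $\A^\pi$-lattice $_\A M$ and the same $\Z^\pi[q^{-1}]$-lattice $\mathcal L(M)$; this gives part (2).

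The final step is to verify the four conditions of Definition~\ref{ad:def:1}, giving part (3). Axiom (1) holds because $\Upsilon_\mu$ shifts the $X$-weight by $\mu \in \N[I]$, and the image of $\N[I]$ in $X_\imath = X/Y^{\imath\perp}$-modulo-$\{$weights identified by $\tau\}$ lies in the subgroup along which the $\UUi$-weight grading collapses; thus each $b^\imath$ is homogeneous for the $X_\imath$-grading, of the same $X_\imath$-weight as $b$. Axiom (2), the stability of $_\A M$ under $_\A\UUidot$, follows by combining stage two (namely that $B^\imath$ is an $\A^\pi$-basis of $_\A M$) with the integrality of the action of the $\imath^\pi$-divided powers that generate $\aAp\UUidot$ (Proposition~\ref{prop:ipiDP} and Definition~\ref{def:AUi}) and Corollary~\ref{cor:tensor}; since $\aAp\UUidot \subseteq {}_\A\UUidot$, the full $_\A\UUidot$-stability reduces to this. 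Axiom (3), $\imath$-involutivity, is Proposition~\ref{prop:compatibleBbar}. Axiom (4), the crystal-lattice condition at $q = \infty$, is immediate from $b^\imath \equiv b \pmod{q^{-1}\mathcal L(M)}$.

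The main obstacle is stage one: one needs the expansion of $\Upsilon(b)$ in $B$ to have coefficients in $\A^\pi$, and beyond finite type the element $\Upsilon$ itself is not integral — only its action on integrable highest weight modules and their tensor products is. This action-wise integrality is exactly what Corollaries~\ref{cor:Zform} and \ref{cor:tensor} provide, and it is the hypothesis that $\psi_\imath$ preserves $_\A M$ in the statement that encapsulates this input. A secondary (but routine) point is that the $\pi$-basis generalization of Lusztig's lemma requires tracking the $\pi$-action throughout, which is precisely the content of \cite[Lemma~9]{Cl14}.
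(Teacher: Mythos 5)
Your proposal is correct and follows exactly the route the paper intends: the paper proves this theorem only by citing ``a standard argument using \cite[Lemma~9]{Cl14}'' together with the unitriangularity and integrality of $\psi_\imath=\Upsilon\circ\psi$ on $B$, which is precisely what you have spelled out. The only quibble is in your verification of axiom (2): stability of ${}_\A M$ under ${}_\A\UUidot$ does not ``reduce to'' stability under the subalgebra $\aAp\UUidot$, but rather is immediate from Definition~\ref{def:mAUidot}, since ${}_\A\UUidot\subseteq{}_\A\UUdot$ and ${}_\A M$ is already ${}_\A\UUdot$-stable because $(M,B)$ is a based $\UU$-module.
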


Recall the based $\UU$-submodule $L(\la, \mu)$, for $\la, \mu \in X^+$, which in light of Theorem~\ref{thm:piCBmodule} can be viewed as a based $\UUi$-module. We denote this $\UUi$-module $ L^\imath(\la,\mu) $. A corollary of the theorem is the following cf. \cite[\S6]{BW18c}:

\begin{cor} \label{cor:basedtensor}
	Let $\lambda, \mu, \lambda_i \in X^+$ for $1 \le i \le \ell$, and $w\in W$. 
	\begin{enumerate}	
		\item 	$L(\lambda_1) \otimes \ldots \otimes L(\la_\ell)$ is a based $\UUi$-module, with the  $\imath$-canonical basis defined as Theorem~\ref{thm:piCBmodule}. 
		\item 	$L(w\lambda, \mu)$ is a based $\UUi$-submodule of $L(\lambda) \otimes L(\mu)$.
	\end{enumerate}
\end{cor}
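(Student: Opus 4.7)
The plan is to deduce both parts directly from Theorem~\ref{thm:piCBmodule}, using the integrality results for the quasi-$K$-matrix $\Upsilon$ already established, together with Proposition~\ref{prop:CBtensor} and Proposition~\ref{thm:basedsub} which provide the required based $\UU$-module structure.

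For part (1), set $M := L(\lambda_1) \otimes \cdots \otimes L(\lambda_\ell)$. Iterating Proposition~\ref{prop:CBtensor} realizes $M$ as a based $\UU$-module with $\pi$-basis $B = B(\lambda_1) \diamondsuit \cdots \diamondsuit B(\lambda_\ell)$; since each factor $L(\lambda_i)$ has weights bounded above by $\lambda_i$, the weights of $M$ are bounded above by $\sum_i \lambda_i$. By Corollary~\ref{cor:tensor} (applied $\ell-1$ times), the $\imath$-involution $\psi_\imath = \Upsilon \circ \psi$ preserves the $\A$-lattice ${}_\A M := {}_\A L(\lambda_1) \otimes_\A \cdots \otimes_\A {}_\A L(\lambda_\ell)$. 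Thus the hypotheses of Theorem~\ref{thm:piCBmodule} are satisfied, and we obtain the $\imath$-canonical $\pi$-basis $B^\imath = \{b^\imath \mid b \in B\}$, verifying conditions (1)--(4) of Definition~\ref{ad:def:1} for $(M, B^\imath)$.

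For part (2), write $N := L(\lambda) \otimes L(\mu)$ and $N' := L(w\lambda, \mu)$. Proposition~\ref{thm:basedsub} realizes $N'$ as a based $\UU$-submodule of $N$; in particular, the canonical basis $B_{N'}$ of $N'$ is a subset of the canonical basis $B_N$ of $N$, the $\A$-lattice ${}_\A N'$ is contained in ${}_\A N$, and $\psi$ preserves $N'$ along with ${}_\A N'$. Since $\Upsilon \in \hat{\UU}^+$, its action on $N$ preserves the $\UU$-submodule $N'$, so $\psi_\imath = \Upsilon \circ \psi$ restricts to an involution of $N'$; combining this with part (1) applied in the rank $\ell = 2$ case to $N$ (so that $\psi_\imath$ already preserves ${}_\A N$), we see $\psi_\imath$ preserves ${}_\A N'$ as well. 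Applying Theorem~\ref{thm:piCBmodule} directly to $N'$ endows it with its own $\imath$-canonical $\pi$-basis.

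To finish, one identifies the $\imath$-canonical basis of $N'$ with the restriction of the $\imath$-canonical basis of $N$: for each $b \in B_{N'} \subset B_N$, the element $b^\imath$ constructed inside $N'$ is $\psi_\imath$-invariant, has leading term $b$, and differs from $b$ by an element of $q^{-1}\Z^\pi[q^{-1}]\text{-span}(B_{N'} \setminus \{b\}) \subset q^{-1}\Z^\pi[q^{-1}]\text{-span}(B_N \setminus \{b\})$. By the uniqueness clause of Theorem~\ref{thm:piCBmodule}(1) applied in $N$, this element must coincide with the $\imath$-canonical basis element of $N$ attached to $b$. Hence the $\imath$-canonical basis of $N$ restricts to that of $N'$, which is precisely the based $\UUi$-submodule property. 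The main (and only substantive) obstacle is the compatibility with the submodule structure in this last step; this is circumvented entirely by the uniqueness characterization in Theorem~\ref{thm:piCBmodule}, so the corollary is essentially a formal consequence of the integrality results of Section~\ref{sec:upintegrality}.
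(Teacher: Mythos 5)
Your proof is correct and follows essentially the same route as the paper, which states the corollary as a direct consequence of Theorem~\ref{thm:piCBmodule} via Corollary~\ref{cor:tensor} for part (1) and Proposition~\ref{thm:basedsub} plus the uniqueness clause for part (2). The only detail worth noting is that Proposition~\ref{thm:basedsub} as printed concerns $L(\lambda,\mu)$ rather than $L(w\lambda,\mu)$, but you use it in the way the paper evidently intends (the unused $w\in W$ in its hypotheses), so no substantive gap arises.
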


\subsection{The element $\Theta^\imath$}
\label{subsec:Thetai}

Recall the quasi-$\mc R$ matrix $\Theta\in\widehat{\UU \otimes \UU}$ from \S\ref{subsec:bar+rmatrix} above. It follows from Theorem~\ref{thm:Upsilon} that $\Upsilon^{-1} \otimes \id$ and $\Delta(\Upsilon)$ are both in $\widehat{\UU \otimes \UU}$.

We define 
\begin{equation}\label{ad:eq:1}
\Theta^\imath = \Delta (\Upsilon) \cdot \Theta \cdot (\Upsilon^{-1} \otimes \id) \in \widehat{\UU \otimes \UU}.
\end{equation}

\begin{prop}[{cf. \upshape \cite[Proposition 3.2]{BW18a}}] \label{prop:quasiRB}
	For any $b\in \UUi$ one has
	\begin{align*}
	\Delta(\ipsi(b)) \cdot \Theta^\imath = \Theta^\imath \cdot (\ipsi\otimes \psi)\circ \Delta(b)
	\end{align*}
	in $\widehat{\UU \otimes \UU}$.
\end{prop}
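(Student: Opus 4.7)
The plan is to chain together the two intertwining identities already established: Proposition~\ref{prop:rmatrix}(a), which gives $\Delta(u)\Theta = \Theta\,\overline{\Delta}(u)$ for all $u\in\UU$, and Theorem~\ref{thm:Upsilon}, which gives $\ipsi(u)\Upsilon = \Upsilon\psi(u)$ for all $u\in\UUi$. The coideal property of $\UUi$ (namely $\Delta(\UUi)\subset \UUi\otimes \UU$) will be what allows us to re-apply the second identity in the first tensor factor.

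First, I would substitute the definition $\Theta^\imath = \Delta(\Upsilon)\cdot\Theta\cdot(\Upsilon^{-1}\otimes\id)$ into the left-hand side. Using that $\Delta$ is an algebra homomorphism together with the intertwiner identity for $\Upsilon$,
\begin{align*}
\Delta(\ipsi(b))\cdot \Theta^\imath
&= \Delta\bigl(\ipsi(b)\,\Upsilon\bigr)\cdot \Theta\cdot(\Upsilon^{-1}\otimes\id) \\
&= \Delta(\Upsilon)\cdot \Delta(\psi(b))\cdot\Theta\cdot(\Upsilon^{-1}\otimes\id).
\end{align*}
Then I would invoke the quasi-$\mathcal R$-matrix identity $\Delta(u)\Theta = \Theta\,\overline{\Delta}(u)$ with $u=\psi(b)$, and use $\overline{\Delta}(\psi(b)) = \overline{\Delta(b)} = (\psi\otimes\psi)\Delta(b)$ (from the definition $\overline{\Delta}(x)=\overline{\Delta(\overline x)}$ and $\overline{\phantom{x}}\otimes\overline{\phantom{x}}$ on $\UU\otimes\UU$), yielding
\begin{equation*}
\Delta(\ipsi(b))\cdot \Theta^\imath = \Delta(\Upsilon)\cdot\Theta\cdot(\psi\otimes\psi)\Delta(b)\cdot(\Upsilon^{-1}\otimes\id).
\end{equation*}

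Next, I compare with the right-hand side $\Theta^\imath\cdot(\ipsi\otimes\psi)\Delta(b) = \Delta(\Upsilon)\cdot\Theta\cdot(\Upsilon^{-1}\otimes\id)\cdot(\ipsi\otimes\psi)\Delta(b)$. It thus suffices to show
\begin{equation*}
(\Upsilon\otimes\id)\cdot(\psi\otimes\psi)\Delta(b) \;=\; (\ipsi\otimes\psi)\Delta(b)\cdot(\Upsilon\otimes\id).
\end{equation*}
Writing $\Delta(b)=\sum b_{(1)}\otimes b_{(2)}$, this reduces tensor-factor by tensor-factor to the identity $\Upsilon\,\psi(b_{(1)}) = \ipsi(b_{(1)})\,\Upsilon$ (the second factor $\psi(b_{(2)})$ matches on both sides). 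The right-coideal property $\Delta(\UUi)\subset \UUi\otimes\UU$ ensures $b_{(1)}\in\UUi$, so this is precisely a second application of Theorem~\ref{thm:Upsilon}, and the identity follows.

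The main technical point to be careful with is that all of these manipulations happen in the completion $\widehat{\UU\otimes\UU}$, so at the start I would verify that each of $\Delta(\Upsilon)$, $\Upsilon^{-1}\otimes\id$, and their products with $\Delta(\ipsi(b))$ and $(\ipsi\otimes\psi)\Delta(b)$ lies in $\widehat{\UU\otimes\UU}$ and can be composed in the indicated order (following \S\ref{subsec:bar+rmatrix}); the coideal property and the weight-homogeneity of $\Upsilon_\mu\in\UU^+_\mu$ make this routine. Once convergence is in hand, the argument above is purely formal: the only substantive inputs are the two intertwining relations and the coideal property.
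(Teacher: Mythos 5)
Your proposal is correct and follows essentially the same route as the paper's proof: both arguments chain the definition of $\Theta^\imath$ with the quasi-$\mathcal R$-matrix identity $\Delta(u)\Theta=\Theta\ov{\Delta}(u)$ and two applications of the intertwiner relation $\ipsi(u)\Upsilon=\Upsilon\psi(u)$ (once inside $\Delta$, once in the first tensor factor via the coideal property). The only difference is that you work from the left-hand side and the paper works from the right-hand side, which is immaterial.
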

%%%%%%%%%%%%%%%%%%%%%%%%%%%%%%%%%%%%%%%%%%%%%%
\begin{proof}
	Let $b\in \UUi$. Using the intertwiner relations one calculates
	\begin{align*}
	\Theta^\imath \cdot (\ipsi\otimes \psi)\circ \Delta(b) 
	&= \Delta(\Upsilon) \cdot \Theta \cdot (\Upsilon^{-1}\otimes 1)\cdot (\ipsi\otimes \psi)\circ \Delta(b)\\
	&= \Delta(\Upsilon) \cdot \Theta \cdot(\psi\otimes \psi)\circ \Delta(b)\cdot(\Upsilon^{-1}\otimes 1) \quad (\text{using Theorem~\ref{thm:Upsilon}})\\
	&=\Delta(\Upsilon)\cdot \Delta(\psi(b))\cdot \Theta \cdot(\Upsilon^{-1}\otimes 1) \quad (\text{using Prop~\ref{prop:rmatrix}})\\
	&=\Delta(\ipsi(b)) \cdot \Delta(\Upsilon)\cdot \Theta \cdot (\Upsilon^{-1}\otimes 1) \quad (\text{using Theorem~\ref{thm:Upsilon} again})
	\end{align*}
	which proves the proposition.
\end{proof}

We can write 
\begin{equation}
\label{eq:Thetamu}
\Theta^\imath = \sum_{\mu \in \N\I}\Theta^\imath_{\mu}, \qquad
\text{ where } \Theta^\imath_{\mu} \in  \UU \otimes \UU^+_\mu.
\end{equation}

\begin{lem}
	\label{Thetaparity}
	The first and second tensor factors of each term in $ \Theta^\imath_{\mu} \in  \UU \otimes \UU^+_\mu $ share the same parity.
\end{lem}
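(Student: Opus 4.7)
The strategy is to show that $\Theta^\imath$ inherits a ``parity-balanced'' property from each of the three factors appearing in its defining formula \eqref{ad:eq:1}, where I call a (sum of) tensor(s) in $\UU \otimes \UU$ parity-balanced if in every weight-homogeneous summand $u \otimes v$ the two factors satisfy $p(u)=p(v)$. Since the second tensor factor of any summand of $\Theta^\imath_\mu$ already has parity $p(\mu)$ by construction, proving parity-balance is exactly the content of the lemma.

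The key input is Theorem~\ref{thm:Upsilon}, which gives $\Upsilon_\mu=0$ whenever $p(\mu)=\bar 1$; consequently every nonzero weight component of $\Upsilon$ has parity $\bar 0$, and the same holds for $\Upsilon^{-1}=\psi(\Upsilon)$ since $\psi$ preserves the weight decomposition. From this I would verify each of the three factors separately. For $\Upsilon^{-1}\otimes\id$, every homogeneous summand lies in some $\UU^+_{\rho}\otimes \K(q)^\pi\cdot\id$ with $p(\rho)=\bar 0$, so both factors have parity $\bar 0$. For $\Theta=\sum_\nu \Theta_\nu$, Proposition~\ref{prop:rmatrix} places $\Theta_\nu\in\UU^-_\nu\otimes\UU^+_\nu$, so in every summand both factors have parity $p(\nu)$. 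For $\Delta(\Upsilon)$, I apply $\Delta$ to a homogeneous $\Upsilon_\mu$ of weight $\mu$ with $p(\mu)=\bar 0$; any weight-homogeneous summand $x_{(1)}\otimes x_{(2)}$ satisfies $|x_{(1)}|+|x_{(2)}|=\mu$, whence $p(x_{(1)})+p(x_{(2)})=p(\mu)=\bar 0$, and thus $p(x_{(1)})=p(x_{(2)})$.

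It remains to observe that parity-balanced tensors form a multiplicatively closed subset of $\UU\otimes\UU$ under the (possibly $\pi$-twisted) product: if $p(u)=p(v)$ and $p(u')=p(v')$, then $(u\otimes v)(u'\otimes v')$ is a scalar multiple of $uu'\otimes vv'$ with $p(uu')=p(u)+p(u')=p(v)+p(v')=p(vv')$. Applying this to the product $\Delta(\Upsilon)\cdot\Theta\cdot(\Upsilon^{-1}\otimes\id)$ shows that every homogeneous summand of $\Theta^\imath$ is parity-balanced, which is the claim.

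The only genuinely nontrivial ingredient is the vanishing $\Upsilon_\mu=0$ for $p(\mu)=\bar 1$, which was already established in Theorem~\ref{thm:Upsilon}; once that is in hand, the remainder of the argument is straightforward bookkeeping with the $\Z_2$-grading, so I do not anticipate a real obstacle.
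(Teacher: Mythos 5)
Your proposal is correct and follows essentially the same route as the paper: both arguments rest on the vanishing $\Upsilon_\mu=0$ for $p(\mu)=\bar 1$ from Theorem~\ref{thm:Upsilon}, the form of $\Theta_\nu\in\UU^-_\nu\otimes\UU^+_\nu$ from Proposition~\ref{prop:rmatrix}, and the multiplicativity of the parity-balanced property. You have merely spelled out the bookkeeping (additivity of parity under $\Delta$ and under the twisted product) that the paper leaves implicit.
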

%%%%%%%%%%%%%%%%%%%%%%%%%%%%%%%%%%%%%%%%%%%%%%
\begin{proof}
	As we saw above, $ p(\Upsilon) = p(\Upsilon^{-1}) = 0 $ and so $ \Delta(\Upsilon) $ has the property that the first and second tensor factors of its terms share the same parity. By Proposition~\ref{prop:rmatrix}, $ \Theta_\nu $ also has this property, and so $ \Theta^\imath = \Delta (\Upsilon) \cdot \Theta \cdot (\Upsilon^{-1} \otimes \id) $ does as well.
\end{proof}

The following result is an analogue of \cite[Proposition~3.6]{Ko17}, which first appeared in \cite[Proposition~3.5]{BW18a} for the quantum symmetric pairs of (quasi-split) type AIII/AIV.
\begin{lem}\label{ad:lem:1}
	We have $\Theta^\imath_{\mu} \in \UUi \otimes \UU^+_\mu$, for all $\mu$. In particular,  we have $\Theta^\imath_{0} = 1\otimes 1$. 
\end{lem}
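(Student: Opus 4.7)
The plan is to prove the statement by induction on $\hgt(\mu)$, following closely the strategy of \cite[Proposition~3.5]{BW18a} adapted to the covering setting. For the base case, I would verify $\Theta^\imath_0 = 1\otimes 1$ directly: since weights lie in $\N[\I]$, projecting the factored expression $\Theta^\imath = \Delta(\Upsilon)\,\Theta\,(\Upsilon^{-1}\otimes\id)$ onto the weight-$0$ part of the second tensor factor is multiplicative, and the only term in $\Delta(\Upsilon_\nu)$ with trivial second factor is $\Upsilon_\nu\otimes 1$ (obtained by selecting the summand $E_j\otimes 1$ from each $\Delta(E_j)=E_j\otimes 1+\tJ_j\tK_j\otimes E_j$). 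Combined with $\Theta_0 = 1\otimes 1$, this gives $\Theta^\imath_0 = (\Upsilon\otimes 1)(1\otimes 1)(\Upsilon^{-1}\otimes 1) = 1\otimes 1$, which sits in $\UUi\otimes\UU^+_0$.

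For the inductive step, suppose $\Theta^\imath_\nu \in \UUi\otimes\UU^+_\nu$ whenever $\hgt(\nu)<\hgt(\mu)$. The key tool is Proposition~\ref{prop:quasiRB} applied to $b=B_i$ for each $i\in\I$: since $\ipsi(B_i)=B_i$, we obtain
$$\Delta(B_i)\,\Theta^\imath \;=\; \Theta^\imath\,(\ipsi\otimes\psi)\Delta(B_i).$$
Expanding $\Delta(B_i) = B_i\otimes\tK_i^{-1} + 1\otimes F_i + \vs_i\,\tJ_{\tau i}\tK_{\tau i}\tK_i^{-1}\otimes E_{\tau i}\tK_i^{-1}$ and using $\psi(\tK_i^{-1})=\tJ_i\tK_i$, I would project both sides of the identity onto the weight-$\mu$ part of the second tensor factor. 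The terms $B_i\otimes\tK_i^{-1}$ and its image under $\ipsi\otimes\psi$ couple $\Theta^\imath_\mu$ to itself by left and right multiplication by $B_i$, while the remaining terms (whose second factors carry nonzero weight) couple $\Theta^\imath_\mu$ to $\Theta^\imath_\nu$ with $\hgt(\nu)<\hgt(\mu)$. Rearranging yields, for each $i\in\I$, a recursive relation of the schematic form
$$B_i\,a_\mu - \tau_i(a_\mu)\,B_i \;=\; R_i,$$
where $a_\mu$ records the first tensor factor of $\Theta^\imath_\mu$, $\tau_i$ is an appropriate parity/weight twist, and $R_i$ lies in $\UUi\otimes\UU^+_\mu$ by the inductive hypothesis.

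The main obstacle is the parity bookkeeping introduced by the covering structure: the $\pi$-factors and $\tJ$-elements that arise whenever odd generators are commuted or $\psi,\ipsi$ are applied must align compatibly on both sides of the identity. Lemma~\ref{Thetaparity} is essential here, since it ensures that both tensor factors of every term in $\Theta^\imath_\mu$ share the same parity, which is precisely what is needed for these factors to cancel consistently across the intertwining equation. Once the recursive relation is in hand for every $i\in\I$, expanding $\Theta^\imath_\mu$ in a homogeneous basis of $\UU^+_\mu$, isolating coefficients, and using that $\UUi$ is generated by the $B_i$ together with the Cartan generators $\tJ_i$ and $K_\mu$ ($\mu\in Y^\imath$) force the first tensor factor into $\UUi$, completing the induction.
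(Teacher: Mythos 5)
Your setup is right (Proposition~\ref{prop:quasiRB} with $b=B_i$, the expansion of $\Delta(B_i)$, Lemma~\ref{Thetaparity} for the parity bookkeeping, induction on $\hgt(\mu)$), but the mechanism of your inductive step is not the one that works, and it has a genuine gap. First, your weight bookkeeping is off: it is the term $1\otimes F_i$, not $B_i\otimes \tK_i^{-1}$, that couples $\Theta^\imath_\mu$ to itself in a nontrivial way, because $F_i$ does not lie in $\UU^+\UU^0$ and so "projecting onto the weight-$\mu$ part of the second factor" is not defined until you normal-order. The indispensable tool here is Proposition~\ref{prop:EF222}: it converts $(\Theta^\imath_\mu)_1\otimes[(\Theta^\imath_\mu)_2,F_i]$ into terms involving $r_i\bigl((\Theta^\imath_\mu)_2\bigr)\tJ_i\tK_i$ and ${}_i r\bigl((\Theta^\imath_\mu)_2\bigr)\tK_{-i}$. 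Comparing the coefficient of $1\otimes \tJ_i\tK_i$ on both sides of the intertwining identity then yields a recursion for the \emph{twisted derivative of the second factor},
$(1\otimes r_i)(\Theta^\imath_{\mu}) = -(\pi_iq_i-q_i^{-1})\,\Theta^\imath\,(B_i\otimes 1+\overline{c_i}q_i^{2}J_i\otimes E_i)$
in the appropriate graded piece, i.e.\ $(1\otimes r_i)(\Theta^\imath_\mu)$ is a lower-height $\Theta^\imath_\nu$ right-multiplied by elements of $\UUi\otimes\UU^+$. This is structurally different from your schematic relation $B_i a_\mu-\tau_i(a_\mu)B_i=R_i$ on the \emph{first} factor.

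Second, even if you had such a commutator relation, your concluding step is a non sequitur: knowing that $B_i a_\mu-\tau_i(a_\mu)B_i\in\UUi$ for all $i$ does not force $a_\mu\in\UUi$ (the set of elements whose twisted commutators with all $B_i$ land in $\UUi$ is strictly larger than $\UUi$), and "$\UUi$ is generated by the $B_i$ and Cartan elements" gives you no purchase on the first tensor factor. The correct finish is the one in \cite[Proposition~3.6]{Ko17}: by the induction hypothesis $(1\otimes r_i)(\Theta^\imath_\mu)\in\UUi\otimes\UU^+_{\mu-i}$ for every $i$; expanding $\Theta^\imath_\mu=\sum_k c_k\otimes d_k$ over a basis $\{d_k\}$ of $\UU^+_\mu$ and using that the maps $r_i$ jointly have trivial kernel on $\UU^+_\mu$ for $\mu\neq 0$ (Lemma~\ref{lem:rivanishing}), one solves for each $c_k$ as a linear combination of first factors already known to lie in $\UUi$; Lemma~\ref{lem:rijr} is what makes the recursions for different $i$ consistent. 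Without Proposition~\ref{prop:EF222} and Lemma~\ref{lem:rivanishing} your induction does not close.
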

%%%%%%%%%%%%%%%%%%%%%%%%%%%%%%%%%%%%%%%%%%%%%%
\begin{proof}
	For any $i\in I$ one has
	\begin{align*}
	\Delta(B_i) = B_i \otimes K_i^{-1} + 1 \otimes F_i + c_i J_i \otimes E_i K_i^{-1}.
	\end{align*} Hence
	Proposition \ref{prop:quasiRB} implies that
	\begin{align*}
	\big(B_i \otimes K_i^{-1} + 1 \otimes F_i +  c_i J_i \otimes E_i K_i^{-1}\big)\cdot \Theta^\imath = \Theta^\imath \cdot
	\big(B_i \otimes J_i K_i + 1 \otimes F_i + \overline{c_i} J_i \otimes J_i K_i E_i \big).
	\end{align*}
	
	Rearranging this we obtain
	\begin{align}
	\label{BiTheta}
	\Theta^\imath (1 \otimes F_i) - (1 \otimes F_i) \Theta^\imath = (B_i \otimes K_i^{-1} + c_i J_i \otimes E_i K_i^{-1}) \Theta^\imath - \Theta^\imath (B_i \otimes J_i K_i + \overline{c_i} J_i \otimes J_i K_i E_i )
	\end{align}
	
	In each level $ \mu $, the left hand side is the sum of terms of the form
	\begin{align*}
	\big((\Theta^\imath_{\mu})_1 & \otimes (\Theta^\imath_{\mu})_2 \big) (1 \otimes F_i) - (1 \otimes F_i) \big((\Theta^\imath_{\mu})_1 \otimes (\Theta^\imath_{\mu})_2 \big) \\
	&= (\Theta^\imath_{\mu})_1 \otimes (\Theta^\imath_{\mu})_2 F_i - \blue{\pi_i^{p_1}} (\Theta^\imath_{\mu})_1 \otimes F_i (\Theta^\imath_{\mu})_2 \quad\text{where $p_k := p((\Theta^\imath_{\mu})_k)$, } k = 1,2 \\
	&= (\Theta^\imath_{\mu})_1 \otimes [(\Theta^\imath_{\mu})_2, F_i], \quad\text{since $\pi_i^{p_1} = \pi_i^{p_2}$ by Lemma~\ref{Thetaparity}} \\
	&= (\Theta^\imath_{\mu})_1 \otimes \bigg( { r_i((\Theta^\imath_{\mu})_2) J_i K_i - K_{-i} \pi_i^{p_2 - p(i)} {}_i r((\Theta^\imath_{\mu})_2) \over \pi_i q_i - q_i^{-1} } \bigg) \quad\text{by Proposition~\ref{prop:EF222}}
	\end{align*}
	
	Comparing this to terms on the right hand side of \eqref{BiTheta} with a factor of $ 1 \otimes J_i K_i $, we see that
	
	\begin{align}
	(1\otimes r_i) (\Theta^\imath_{\mu}) &= - (\pi_i q_i-q_i^{-1}) \Theta^\imath (B_i \otimes 1 + \overline{c_i} q_i^2 J_i \otimes E_i ) 
	\label{eq:irRt}
	\end{align}
	
	Then, the same induction as in \cite[Proposition~3.6]{Ko17} completes the proof, this time using Lemma~\ref{lem:rijr} as the appropriate analogue in the quantum covering group setting. 	
\end{proof}

The following is an analogue of \cite[Lemma~3]{BWW18}, used in the proof of a subsequent Theorem:
\begin{lem}\label{lem:intThetai}
	We have $ \Theta^\imath_{\mu} \in {}_\A \UU \otimes_\A {}_\A \UU^+_\mu $ for all $ \mu $.
\end{lem}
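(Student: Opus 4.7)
The plan is to prove this by induction on $\hgt(\mu)$, mirroring the strategy of \cite[Lemma~3]{BWW18}. The base case $\mu = 0$ is immediate from Lemma~\ref{ad:lem:1}: $\Theta^\imath_0 = 1 \otimes 1 \in {}_\A \UU \otimes_\A {}_\A \UU^+_0$.

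For the inductive step, I would assume $\Theta^\imath_\nu \in {}_\A \UU \otimes_\A {}_\A \UU^+_\nu$ for all $\nu$ with $\hgt(\nu) < \hgt(\mu)$. Extracting the component at degree $\mu - i$ on the second tensor factor from \eqref{eq:irRt} yields
\[
(1 \otimes r_i)(\Theta^\imath_\mu) = -(\pi_i q_i - q_i^{-1})\bigl[\Theta^\imath_{\mu - i}(B_i \otimes 1) + \overline{c_i}\, q_i^2\, \Theta^\imath_{\mu - 2i}(J_i \otimes E_i)\bigr]
\]
for each $i \in I$. Since $B_i, E_i \in {}_\A\UU$, $J_i$ is integral and central, and the scalar coefficients $\pi_i q_i - q_i^{-1}$ and $\overline{c_i}\, q_i^2$ lie in $\A$, the inductive hypothesis gives $(1 \otimes r_i)(\Theta^\imath_\mu) \in {}_\A \UU \otimes_\A {}_\A \UU^+_{\mu - i}$ for every $i \in I$.

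To complete the induction I must lift this information to integrality of $\Theta^\imath_\mu$ itself. Expanding $\Theta^\imath_\mu = \sum_{b \in \cB_\mu} u_b \otimes b$ in terms of the canonical basis $\cB_\mu = \cB \cap \UU^+_\mu$, each coefficient $u_b$ lies a priori in $\UUi$ by Lemma~\ref{ad:lem:1}. The canonical basis of $\UU^+$ is $\A$-stable under the twisted derivations $r_i$, so the integrality of $(1 \otimes r_i)(\Theta^\imath_\mu)$ for all $i$ translates into $\A$-linear constraints on the family $(u_b)$ indexed by $\cB_{\mu - i}$ for various $i$. By Lemma~\ref{lem:rivanishing} applied under the identification $\ff \cong \UU^+$ sending $\theta_i \mapsto E_i$, the combined map $\oplus_i r_i$ is injective on $\UU^+_\mu$ for $\mu \neq 0$; these constraints therefore determine the $u_b$ uniquely from the recursion, and the integral saturation of the canonical basis under the $r_i$ then forces each $u_b$ into ${}_\A\UU$.

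The principal obstacle is the saturation statement in the last step: showing that the image of $\oplus_i r_i : {}_\A\UU^+_\mu \to \oplus_i {}_\A\UU^+_{\mu - i}$ is $\A$-saturated, so that integrality of the image propagates back to integrality of the preimage. This is the technical core of the argument, handled for the non-covering case in \cite[Lemma~3]{BWW18}; tracking the additional $\pi$ and $J$ factors through the canonical basis calculus of \cite{CHW14} should yield the quantum covering version without surprises, since the parity considerations in Lemma~\ref{Thetaparity} already ensure that the $\pi$-signs are compatible on both sides of the recursion.
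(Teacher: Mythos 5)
Your inductive setup and the extraction of the degree-$(\mu-i)$ component from \eqref{eq:irRt} are fine, but the lifting step at the end is a genuine gap, not a deferrable technicality. The saturation claim — that $x\in\UU^+_\mu$ with $r_i(x)\in{}_\A\UU^+_{\mu-i}$ for all $i$ forces $x\in{}_\A\UU^+_\mu$ — is false already for $\mathfrak{sl}_3$ with $\pi=1$. Take $\mu=\alpha_1+\alpha_2$, where ${}_\A\ff_\mu$ has $\A$-basis the canonical basis $\{\theta_1\theta_2,\ \theta_2\theta_1\}$, and set $x=\frac{-1}{q-q^{-1}}\bigl(E_1E_2-q^{-1}E_2E_1\bigr)$. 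Then $r_1(x)=0$ and $r_2(x)=-q^{-1}E_1$ both lie in the integral form, yet $x\notin{}_\A\UU^+_\mu$. So injectivity of $\oplus_i r_i$ (Lemma~\ref{lem:rivanishing}) determines $\Theta^\imath_\mu$ from the recursion, but integrality of the derivatives does not propagate back, and the induction cannot close as written. Your appeal to \cite[Lemma~3]{BWW18} for this saturation is also misplaced: that lemma does not argue via the derivations at all.

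The intended (and the paper's) proof is much more direct: by \eqref{ad:eq:1} one has $\Theta^\imath=\Delta(\Upsilon)\cdot\Theta\cdot(\Upsilon^{-1}\otimes\id)$, the homogeneous components of $\Theta$ are integral by the canonical-basis description in Proposition~\ref{prop:rmatrix}, and the components of $\Upsilon$ (hence of $\Delta(\Upsilon)$ and of $\Upsilon^{-1}=\ov{\Upsilon}$) are integral by Theorem~\ref{thm:int}; a product of elements with integral homogeneous components again has integral homogeneous components. If you want to rescue a recursion-based argument, the standard way to upgrade ``determined by its $r_i$-images'' to integrality is via bar-invariance plus almost-orthonormality of the canonical basis, which is considerably more work than the one-line factorization argument and is not needed here.
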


\begin{proof}
	The argument is analogous, using integrality of $ \Theta $ by Proposition~\ref{prop:rmatrix}, together with Theorem~ \ref{thm:int} in the definition of $ \Theta^\imath $.
\end{proof}

\begin{thm}
	Let $M$ be a based $\UUi$-module, and $\lambda \in X^+$. Then $\ipsi \stackrel{\rm def}{=} \Theta^\imath \circ (\ipsi \otimes \psi)$ is an anti-linear involution on $M \otimes L(\lambda)$, and $M \otimes L(\lambda)$ is  a based $\UUi$-module with a bar involution $\ipsi$.
\end{thm}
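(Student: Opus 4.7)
The plan is to verify the two assertions separately: that $\ipsi$ is an antilinear involution on $M\otimes L(\lambda)$, and that the tensor product together with an appropriate basis constitutes a based $\UUi$-module in the sense of Definition~\ref{ad:def:1}. The overall structure parallels \cite[Theorem~6.12]{BW18c}, with the key ingredients being the intertwining property of $\Theta^\imath$ from Proposition~\ref{prop:quasiRB}, the integrality in Lemma~\ref{lem:intThetai}, and Lusztig's Lemma for constructing distinguished bases in the quantum covering setting.

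First, I would check that the action of $\Theta^\imath = \sum_\mu \Theta^\imath_\mu$ on $M\otimes L(\lambda)$ is well-defined: since $L(\lambda)$ is integrable and the weights of $L(\lambda)$ are bounded above, only finitely many $\Theta^\imath_\mu \in \UUi\otimes \UU^+_\mu$ act nontrivially on any given weight vector of $M\otimes L(\lambda)$. Antilinearity of $\ipsi$ is then immediate from the antilinearity of $\ipsi_M$ and $\psi_{L(\lambda)}$ together with the $\K(q)^\pi$-linearity of multiplication by $\Theta^\imath$. For involutivity, one expands
\begin{equation*}
\ipsi^2(m\otimes v) \;=\; \Theta^\imath \cdot (\ipsi\otimes\psi)(\Theta^\imath)\cdot(m\otimes v),
\end{equation*}
so the task reduces to establishing $\Theta^\imath\cdot(\ipsi\otimes\psi)(\Theta^\imath) = 1$. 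I would show that $(\ipsi\otimes\psi)((\Theta^\imath)^{-1})$ satisfies exactly the same defining intertwining identity $\Delta(\ipsi(b))\cdot X = X\cdot (\ipsi\otimes\psi)\Delta(b)$ of Proposition~\ref{prop:quasiRB}, by applying $\ipsi\otimes\psi$ to that identity and relabeling $b\mapsto\ipsi(b)$. Combined with the normalization $\Theta^\imath_0 = 1\otimes 1$ from Lemma~\ref{ad:lem:1} and the uniqueness of $\Upsilon$ in Theorem~\ref{thm:Upsilon}, this forces $(\ipsi\otimes\psi)((\Theta^\imath)^{-1}) = \Theta^\imath$, yielding involutivity.

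For the based $\UUi$-module structure, the weight decomposition is inherited from the tensor product via the coideal embedding $\UUi\hookrightarrow\UU$. Compatibility $\ipsi(u(m\otimes v)) = \ipsi(u)\ipsi(m\otimes v)$ for $u\in\UUi$ follows from Proposition~\ref{prop:quasiRB} by a direct calculation identical in form to the proof of Proposition~\ref{prop:compatibleBbar}. Integrality, that $\ipsi$ preserves ${}_\A M\otimes_\A {}_\A L(\lambda)$, combines Lemma~\ref{lem:intThetai} with the preservation of the $\A$-lattices by $\ipsi_M$ (since $M$ is a based $\UUi$-module) and by $\psi_{L(\lambda)}$ (as $L(\lambda)$ is based over $\UU$). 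To produce the $\imath$-canonical basis, I would start with $B^\imath_M\otimes\mathbf{B}(\lambda)$, observe that $\Theta^\imath_0 = 1\otimes 1$ together with the fact that each remaining $\Theta^\imath_\mu$ strictly raises the second tensor factor implies the triangularity $\ipsi(b^\imath\otimes b) \in b^\imath\otimes b + \sum_{(b'^\imath, b')<(b^\imath,b)} q^{-1}\Z^\pi[q^{-1}]\,(b'^\imath\otimes b')$, and then apply the version of Lusztig's Lemma employed in Theorem~\ref{thm:piCBmodule} (cf. \cite[Lemma~9]{Cl14}) to extract the unique $\ipsi$-invariant basis $\{b^\imath\diamondsuit b\}$ with the required asymptotic property modulo $q^{-1}\mathcal{L}$.

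The main obstacle will be the uniqueness argument in the involutivity step: one must carefully justify that the characterization of $\Theta^\imath$ as the unique intertwiner with constant term $1\otimes 1$ applies to $(\ipsi\otimes\psi)((\Theta^\imath)^{-1})$, keeping track of how $\ipsi$, originally only defined on $\UUi$, extends to act on the first factor of $\UUi\widehat\otimes\UU$, and ensuring that the parity bookkeeping via Lemma~\ref{Thetaparity} is consistent throughout. A secondary technical point is verifying the asymptotic condition (4) of Definition~\ref{ad:def:1} for the constructed basis, which, once triangularity is in hand, amounts to a standard reduction to the image in $L/q^{-1}L$ but must be phrased relative to the compatible partial order on $B^\imath_M\times\mathbf{B}(\lambda)$.
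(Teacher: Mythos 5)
Your proposal follows essentially the same route as the paper: well-definedness of $\Theta^\imath$ on $M\otimes L(\lambda)$ from Lemma~\ref{ad:lem:1} and boundedness of weights, involutivity via the identity $\Theta^\imath\cdot(\ipsi\otimes\psi)(\Theta^\imath)=1\otimes 1$ as in \cite[Proposition~3.13]{BW18a}, integrality of $\ipsi$ by the argument of Proposition~\ref{prop:quasi-KZ} using the surjectivity of the maps $\pi_b$, and the construction of $\B^\imath\diamondsuit_\imath\B$ from the triangularity, Lemma~\ref{lem:intThetai}, and \cite[Lemma~9]{Cl14}. One caution on the involutivity step: the uniqueness you invoke must be that of $\Theta^\imath$ itself as a solution of the intertwining identity with constant term $1\otimes 1$ --- which is not the statement of Theorem~\ref{thm:Upsilon} (that concerns the equation $\ipsi(u)\Upsilon=\Upsilon\psi(u)$ in $\hat\UU^+$) but is instead extracted from the recursion \eqref{eq:irRt} in the proof of Lemma~\ref{ad:lem:1} together with Lemma~\ref{lem:rivanishing}; with that substitution your argument that $(\ipsi\otimes\psi)\bigl((\Theta^\imath)^{-1}\bigr)=\Theta^\imath$ goes through.
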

%%%%%%%%%%%%%%%%%%%%%%%%%%%%%%%%%%%%%%%%%%%%%%
\begin{proof}
	The anti-linear operator $\ipsi = \Theta^\imath \circ (\ipsi \otimes \psi): M \otimes L(\lambda) \rightarrow M \otimes L(\lambda)$ is well defined thanks to Lemma~\ref{ad:lem:1} and the fact that the weights of $L(\lambda)$ are bounded above. Then entirely similar to \cite[Proposition~3.13]{BW18a}, we see that $\ipsi^2=1$ and $M \otimes L(\lambda)$ is $\imath$-involutive in the sense of Definition~\ref{ad:def:1}(3). 
	
	The proof that $\ipsi$ preserves the $\A$-submodule ${}_\A M \otimes_{\A} {}_\A L(\lambda)$ is the same as the proof of Proposition~\ref{prop:quasi-KZ}. By assumption, $(M, \B^\imath(M))$ is a based $\UUi$-module. For any $b \in \B^\imath(M)$, define 
	\[
	\pi_b: {}_\A \UUidot \rightarrow {}_\A M \otimes_{\A} {}_\A L(\lambda), \quad u \mapsto u (b \otimes \eta_{\la}).
	\]
	Then, $\pi_b$ is well defined, since by Definition~\ref{def:mAUidot} and the following remark the coproduct preserves the integral forms, that is, $\Delta(u) (\one_{\mu} \otimes \one_{\nu})$ preserves ${}_\A M \otimes_{\A} {}_\A L(\lambda)$, for any $\mu \in X^\imath$ and $\nu \in X$. 
	
	Note that $ \ipsi (b \otimes \eta_{\lambda}) =  b \otimes \eta_{\lambda} $ for any $b \in \B^\imath(M)$. Following the proof of Lemma~\ref{lem:surjA1}, we have $\sum_{b \in \B^\imath(M)} \pi_b( {}_\A '\UUidot ) =  {}_\A M \otimes_{\A} {}_\A L(\lambda)$. Hence we also have $\sum_{b \in \B^\imath(M)} \pi_b( {}_\A \UUidot ) =  {}_\A M \otimes_{\A} {}_\A L(\lambda)$, since ${}_\A '\UUidot \subset {}_\A \UUidot$. By the same argument as before, we may conclude that $\ipsi$ preserves the $\A$-submodule ${}_\A M \otimes_{\A} {}_\A L(\lambda)$.
	
	We write  $\B = \{b^- \eta_{\la} \vert b \in \B(\lambda)\}$ for the canonical basis of $L(\lambda)$. Following the same argument as for \cite[Theorem~4]{BWW18} i.e. using Lemma~\ref{ad:lem:1} and Lemma~\ref{lem:intThetai} and \cite[Lemma~9]{Cl14}, we conclude that: 
	\begin{enumerate}
		\item
		for $b_1 \in \B^\imath, b_2\in \B$, there exists a unique element $b_1\diamondsuit_\imath b_2$ which is $\ipsi$-invariant such that $b_1\diamondsuit_\imath b_2\in b_1\otimes b_2 +q^{-1}\Z^\pi[q^{-1}] \B^\imath \otimes \B$;
		\item
		we have $b_1\diamondsuit_\imath b_2 \in b_1\otimes b_2 +\sum\limits_{(b'_1,b'_2) \in \B^\imath \times \B, |b_2'| < |b_2|} q^{-1}\Z^\pi[q^{-1}] \, b_1' \otimes b_2'$;
		\item
		$\B^\imath \diamondsuit_\imath \B :=\{b_1\diamondsuit_\imath b_2 \mid b_1 \in \B^\imath, b_2\in \B \}$ forms a $\Q(q)^\pi$-basis for  $M \otimes L(\lambda)$, an $\A^\pi$-basis for ${}_\A M  \otimes_{\A^\pi} {}_\A L(\lambda)$, and a $\Z^\pi[q^{-1}]$-basis for $\mathcal L(M)  \otimes_{\Z^\pi[q^{-1}]} \mathcal L(\lambda)$; 
		\item
		$(M\otimes L(\lambda), \B^\imath \diamondsuit_\imath \B)$ is a based $\UUi$-module.
	\end{enumerate}
\end{proof}

\section{Canonical basis on $\UUidot$}
\label{sec:CBmodified}
In this section, we formulate the main definition and theorems on canonical bases on the modified $\imath$quantum groups. The formulations are based on \cite[Section 7]{BW18c}, which in turn are generalizations of finite type counterparts in \cite[Section 6]{BW18b}.

%%%%
\subsection{The modified $\imath$quantum groups}

Recall the partial order $\leq$ on the weight lattice $X$ in \eqref{eq:leq}. The following proposition is a version of \cite[Proposition 7.1]{BW18c} in the quantum covering setting.

\begin{prop}\label{prop:props}
	Let $\lambda, \mu \in X^+$. %and let $\zeta = \wb \lambda +\mu$ and $\zeta_\imath = \overline{\zeta}$. 
	\begin{enumerate}
		\item	
		The $\imath$-canonical basis of the $ \UUi $-module $L^{\imath}(\lambda, \mu)$ is the basis
		$$\B^\imath(\la, \mu) =\big\{ (b_1 \diamondsuit_{\zeta_\imath} b_2 )_{\la,\mu}^{\imath} \vert 
		(b_1, b_2) \in \B^\imath \times \B \big\} \backslash \{0\},$$
		where 
		$(b_1 \diamondsuit_{\zeta_\imath} b_2 )_{\la,\mu}^{\imath}$ is $\ipsi$-invariant and lies in 
		\begin{equation*} % \label{eq:order}
		(b_1 \diamondsuit_{\zeta} b_2 ) (\eta_\lambda \otimes \eta_{\mu})    +\!\!\! \sum_{|b_1'|+|b_2'| \le |b_1|+|b_2|} \!\!\!q^{-1}\Z[q^{-1}] (b'_1 \diamondsuit_{\zeta} b'_2 ) (\eta_\lambda \otimes \eta_{\mu}). 
		\end{equation*}
		\item We have the projective system $\big \{ L^\imath(\lambda+\nu^\tau, \mu+\nu)  \big \}_{\nu \in X^+}$ of $\UUi$-modules, where 
		\begin{equation*} 
		\pi_{\nu+ \nu_1, \nu_1}: L^{\imath} (\lambda+\nu^\tau+\nu_1^\tau, \mu+\nu+\nu_1) \longrightarrow L^{\imath} (\lambda+\nu^\tau, \mu+\nu), \quad \nu, \nu_1 \in X^+,
		\end{equation*}
		is the unique homomorphism of $\UUi$-modules such that 
		\[\pi(\eta_{\lambda+\nu^\tau+\nu_1^\tau} \otimes \eta_{\mu+\nu+\nu_1}) = \eta_{\lambda+\nu^\tau} \otimes \eta_{\mu+\nu}.\] 
		
		\item The projective system in (2) is asymptotically based in the following sense: for fixed $(b_1, b_2) \in \B^\imath \times \B$ and any $\nu_1 \in X^+$, as long as $\nu \gg 0$, we have 
		\[ 
		\pi_{\nu+ \nu_1, \nu_1} \big((b_1 \diamondsuit_{\zeta_\imath} b_2 )_{\lambda+\nu^\tau+\nu_1^\tau,\mu+\nu+\nu_1}^{\imath}\big) =  \big((b_1 \diamondsuit_{\zeta_\imath} b_2 )_{\lambda+\nu^\tau,\mu+\nu}^{\imath}\big).
		\]
	\end{enumerate}
\end{prop}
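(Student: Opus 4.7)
The plan is to follow the strategy of Bao-Wang \cite[Proposition~7.1]{BW18c}, adapting each step to the quantum covering setting via the machinery developed in Sections \ref{sec:quasiKmatrix}--\ref{sec:iCBmodules}; most of the heavy lifting has already been done, so the argument is largely a bookkeeping exercise.

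For Part~(1), I would invoke Corollary~\ref{cor:basedtensor}(2), which says that $L^\imath(\lambda,\mu)$ is a based $\UUi$-submodule of $L(\lambda) \otimes L(\mu)$, together with the preceding theorem, which equips $L(\lambda) \otimes L(\mu)$ with the $\imath$-canonical basis $\B^\imath \diamondsuit_\imath \B$. By the standard based-submodule principle, the $\imath$-canonical basis of $L^\imath(\lambda,\mu)$ consists of those $\ipsi$-invariant elements in the submodule whose leading term (under the fixed partial order) lies in $L^\imath(\lambda,\mu)$; these are exactly the images $(b_1 \diamondsuit_{\zeta_\imath} b_2)^\imath_{\lambda,\mu}$ when nonzero. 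The triangularity statement lifts directly from the analogous triangularity in Proposition~\ref{prop:CBtensor} composed with the cyclic action of $(b_1 \diamondsuit_\zeta b_2)$ on the cyclic generator $\eta_\lambda \otimes \eta_\mu$.

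For Part~(2), the uniqueness of $\pi_{\nu+\nu_1,\nu_1}$ is immediate because $L^\imath(\lambda+\nu^\tau+\nu_1^\tau, \mu+\nu+\nu_1) = \UUi(\eta_{\lambda+\nu^\tau+\nu_1^\tau} \otimes \eta_{\mu+\nu+\nu_1})$ is cyclically generated. For existence, I would mimic the construction of the based $\UU$-module maps appearing in Proposition~\ref{thm:basedsub}: namely, compose the based homomorphisms $\phi: {}^\omega L(\nu_1^\tau) \otimes L(\lambda+\nu^\tau+\nu_1^\tau) \to L(\lambda+\nu^\tau)$ and $\phi': {}^\omega L(\nu_1) \otimes L(\mu+\nu+\nu_1) \to L(\mu+\nu)$ (and their tensor analogues with $\chi$) to produce a based $\UU$-module homomorphism between large tensor products, then restrict to the relevant cyclic $\UUi$-submodule. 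The projection then takes $\eta_{\lambda+\nu^\tau+\nu_1^\tau} \otimes \eta_{\mu+\nu+\nu_1}$ to $\eta_{\lambda+\nu^\tau} \otimes \eta_{\mu+\nu}$ by construction.

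For Part~(3), the stabilization is where I expect the main technical work, and it is also where the $\pi$-grading and $ \tJ $ factors must be tracked carefully. The idea is to expand $(b_1 \diamondsuit_{\zeta_\imath} b_2)^\imath_{\lambda+\nu^\tau+\nu_1^\tau,\mu+\nu+\nu_1}$ using Part~(1), and observe that for fixed $(b_1,b_2)$ the $\ipsi$-invariant correction is computed in terms of $\Theta^\imath$ and the action of $\UUi$, both of which have a filtration by weight whose action on any fixed weight-space stabilizes once the highest weight is sufficiently deep. Concretely, I would use Lemma~\ref{ad:lem:1} to write $\Theta^\imath_\mu$ as a finite sum (for $\mu$ bounded), observe that for $\nu \gg 0$ the coefficients in the expansion of $(b_1 \diamondsuit_{\zeta_\imath} b_2)^\imath_{\lambda+\nu^\tau,\mu+\nu}$ depend only on the weights $|b_1|,|b_2|$ and not on $\nu$, and then conclude that $\pi_{\nu+\nu_1,\nu_1}$ sends the canonical basis element at level $\nu+\nu_1$ to the canonical basis element at level $\nu$. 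The main obstacle will be carefully keeping track of the $\pi$-signs and the $\tJ_i$-factors through the iterated projection, ensuring that the uniqueness characterization in Part~(1) at level $\nu$ matches what the projection produces from level $\nu+\nu_1$.
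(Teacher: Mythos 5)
Your treatment of Part (1) matches the paper, which indeed disposes of it as a reformulation of Corollary~\ref{cor:basedtensor}, and your outline of Part (3) is in the spirit of the cited argument of Bao--Wang. The problem is Part (2). The map $\pi_{\nu+\nu_1,\nu_1}$ cannot be obtained as (the restriction of) a based $\UU$-module homomorphism: any $\UU$-module homomorphism commutes with all $K_\mu$, $\mu\in Y$, hence preserves $X$-weights, whereas $\pi_{\nu+\nu_1,\nu_1}$ must send the vector $\eta_{\la+\nu^\tau+\nu_1^\tau}\otimes\eta_{\mu+\nu+\nu_1}$, of weight $\la+\mu+\nu^\tau+\nu+\nu_1^\tau+\nu_1$, to a vector of weight $\la+\mu+\nu^\tau+\nu$. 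It is only a $\UUi$-module homomorphism (the shift $\nu_1^\tau+\nu_1$ is invisible to $K_\mu$ with $\mu\in Y^\imath$). There is also a domain mismatch in your construction: $\phi:{}^\omega L(\nu_1^\tau)\otimes L(\la+\nu^\tau+\nu_1^\tau)\to L(\la+\nu^\tau)$ requires an extra tensor factor ${}^\omega L(\nu_1^\tau)$ that is simply not present in $L(\la+\nu^\tau+\nu_1^\tau)\otimes L(\mu+\nu+\nu_1)$, and no natural map supplies it.

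The missing idea, which is the actual content of the cited \cite[Proposition~6.12]{BW18b} (and why the paper's proof explicitly invokes the quasi-$\mathcal R$-matrix of Proposition~\ref{prop:rmatrix}), is to first apply the $\UU$-module homomorphisms $\chi$ to split off the factors $L(\nu_1^\tau)$ and $L(\nu_1)$, landing in $L(\la+\nu^\tau)\otimes L(\nu_1^\tau)\otimes L(\nu_1)\otimes L(\mu+\nu)$, and then contract the two middle factors via a homomorphism of $\UUi$-modules $L(\nu_1^\tau)\otimes L(\nu_1)\to\K(q)^\pi$ onto the trivial $\UUi$-module, normalized on $\eta_{\nu_1^\tau}\otimes\eta_{\nu_1}$. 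The existence of this contraction is exactly where the $\imath$-structure enters and is not a consequence of based $\UU$-module formalism; without it Part (2) does not go through. Uniqueness from cyclicity, as you note, is fine once existence is settled.
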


\begin{proof}
	Claim (1) is just a reformulation of \ref{cor:basedtensor}. %Corollary~\ref{cor:iCBbased}. 
	Claim (2) follows by the same proof as \cite[Proposition~6.12]{BW18b}, using the quasi-$ \mathcal{R} $-matrix in Proposition~\ref{prop:rmatrix}.
	
	Claim (3) is the same as \cite[Proposition~6.16]{BW18b}, and we can do without the mild modification needed in \cite{BW18c} since the module $L(\nu^\tau+\nu)$ is finite dimensional. 
	%But this can be replaced by the fact that only finitely many $a(b', b'')$ are non-zero {\em loc cit}. The rest is exactly the same. 
	%
	%Note that the (fixed) parameters plays no essential role here as we are taking  $\nu \gg 0$.
\end{proof}

Proposition~\ref{prop:props} is the main mechanism of proof in the following version of \cite[Theorem 7.2]{BW18c} (see also \cite[Theorem 6.17]{BW18b}), granting the $\imath$-canonical basis for $ \UUidot $:

\begin{prop}  
	\label{prop:iCBUi}
	Let ${\zeta_\imath} \in X_{\imath}$ and $(b_1, b_2) \in B \times B$. 
	\begin{enumerate}
		\item	
		There is a unique element $u =b_1 \diamondsuit^\imath_{\zeta_\imath} b_2  \in \UUidot$ such that 
		\[
		u (\eta_\lambda \otimes \eta_\mu) 
		= (b_1 \diamondsuit_{\zeta_\imath} b_2)_{\lambda,\mu}^\imath				\in L^{\imath} (\lambda, \mu), 
		\]
		for all $\lambda, \mu \gg0$ with $\overline{\lambda+\mu} = {\zeta_\imath}$.
		\item	The element $b_1 \diamondsuit^\imath_{\zeta_\imath} b_2$ is $\ipsi$-invariant.
		\item	The set $\dot{\B}^\imath =\{ b_1 \diamondsuit^\imath_{\zeta_\imath} b_2 \big \vert {\zeta_\imath} \in X_\imath, (b_1, b_2) \in B_{\Iblack} \times B \}$ 
		forms a $\Q(q)^\pi$-basis of $\UUidot$ and an $\A^\pi$-basis of ${}_\A\UUidot$.
	\end{enumerate}
\end{prop}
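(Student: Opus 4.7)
The plan is to mirror the strategy of \cite[Theorem~6.17]{BW18b} and \cite[Theorem~7.2]{BW18c}, with Proposition~\ref{prop:props} supplying all the structural input that is specific to the quantum covering setting. The central input is that the projective system of based $\UUi$-modules $\{L^{\imath}(\lambda+\nu^\tau,\mu+\nu)\}_{\nu \in X^+}$ is asymptotically based, so that for each fixed pair $(b_1,b_2) \in B \times B$, the family of $\imath$-canonical basis elements $(b_1 \diamondsuit_{\zeta_\imath} b_2)^\imath_{\lambda+\nu^\tau,\mu+\nu}$ stabilizes under the transition maps once $\nu$ is sufficiently large. This stability is what will produce a single element of $\UUidot$.

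For (1), I would first realize $\UUidot$ as acting asymptotically faithfully on the family $\{L^\imath(\lambda,\mu)\}_{\lambda,\mu \gg 0}$ via the surjectivity of Corollary~\ref{cor:surjA2}, which writes $\aA M \otimes_\A \aL(\la)$ as a sum of cyclic images of $\aAp \UUidot \one_{\overline{|b|+\lambda}}$. Using this, a $\UUidot$-element is determined by the compatible data of its actions on all sufficiently large $\eta_\lambda \otimes \eta_\mu$. Proposition~\ref{prop:props}(3) says precisely that the values $(b_1 \diamondsuit_{\zeta_\imath} b_2)^\imath_{\lambda+\nu^\tau,\mu+\nu}$ assemble into such compatible data, and hence yield a unique $u = b_1 \diamondsuit^\imath_{\zeta_\imath} b_2 \in \UUidot \one_{\zeta_\imath}$ realizing them.

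Claim (2) is then nearly automatic: for each admissible pair $(\lambda,\mu)$, the vector $\eta_\lambda \otimes \eta_\mu$ is $\ipsi$-fixed in the based $\UUi$-module $L^\imath(\lambda,\mu)$, the $\UUidot$-action intertwines $\ipsi$ by the $\imath$-involutivity established via $\Theta^\imath$ in \S\ref{subsec:Thetai}, and $(b_1 \diamondsuit_{\zeta_\imath} b_2)^\imath_{\lambda,\mu}$ is $\ipsi$-invariant by Proposition~\ref{prop:props}(1). Thus $\ipsi(u)$ and $u$ agree on every test vector $\eta_\lambda \otimes \eta_\mu$ for $\lambda,\mu \gg 0$, and uniqueness from (1) forces $\ipsi(u) = u$. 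For (3), I would expand each $u$ against the canonical basis $\dot{\B}$ of $\UUdot$ of \cite{Cl14, CFLW}: the unitriangularity in Proposition~\ref{prop:props}(1) of $(b_1 \diamondsuit_{\zeta_\imath} b_2)^\imath_{\lambda,\mu}$ against $(b'_1 \diamondsuit_{\zeta} b'_2)(\eta_\lambda \otimes \eta_\mu)$ transfers, via the asymptotically faithful embedding above, into a unitriangular expansion of $u$ against elements of $\dot{\B}$ with idempotent $\one_{\zeta_\imath}$. Linear independence and spanning over $\Q(q)^\pi$, as well as the $\A^\pi$-integrality of $\dot{\B}^\imath$ as a basis of ${}_\A\UUidot$, then follow from the corresponding statements for $\dot{\B}$ in $\UUdot$, with integrality supplied by Lemma~\ref{lem:intThetai}.

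The main obstacle will be the first step of (1): verifying that the asymptotically stable family really lifts to a honest element of $\UUidot$ (rather than just an element of a completion of $\UUidot$) with the correct $\pi$-parity and $\tJ$-compatibility. This requires careful bookkeeping of the parity decomposition forced by Lemma~\ref{Thetaparity}, together with the fact that $\Upsilon$ has no odd part (Theorem~\ref{thm:Upsilon}), to ensure that the stabilizing element has well-defined weight and parity in $\UUidot \one_{\zeta_\imath}$. This is the one place where the covering setting requires genuinely new input beyond a mechanical transcription of \cite{BW18b,BW18c}.
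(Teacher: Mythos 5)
The paper states this proposition without proof, deferring to the template of \cite[Theorem~6.17]{BW18b} and \cite[Theorem~7.2]{BW18c} with Proposition~\ref{prop:props} explicitly flagged as ``the main mechanism of proof.'' Your reconstruction of parts (1) and (2) follows exactly that intended route: asymptotic stability of the $\imath$-canonical basis elements under the projective system from Proposition~\ref{prop:props}(3), asymptotic faithfulness of the action of $\UUidot\one_{\zeta_\imath}$ on the family $\{L^\imath(\lambda,\mu)\}_{\lambda,\mu\gg 0}$ via Corollary~\ref{cor:surjA2}, and then $\ipsi$-invariance by testing against the $\ipsi$-fixed vectors $\eta_\lambda\otimes\eta_\mu$ and invoking uniqueness. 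That is correct and is what the paper has in mind.

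Your argument for part (3), however, has a conceptual slip. You propose to ``expand each $u$ against the canonical basis $\dot{\B}$ of $\UUdot$'' and to obtain ``a unitriangular expansion of $u$ against elements of $\dot{\B}$ with idempotent $\one_{\zeta_\imath}$.'' But $\UUidot$ is not a subalgebra of $\UUdot$: its idempotents $\one_{\zeta_\imath}$ are indexed by $\zeta_\imath\in X_\imath$, whereas the elements of $\dot{\B}$ carry idempotents $\one_\lambda$ with $\lambda\in X$, so there are no elements of $\dot{\B}$ ``with idempotent $\one_{\zeta_\imath}$,'' and a single $u\in\UUidot\one_{\zeta_\imath}$ only yields a family $u\cdot\one_\lambda\in{}_\A\UUdot$ over all $\lambda$ with $\ov{\lambda}=\zeta_\imath$ (this is exactly the content of Definition~\ref{def:mAUidot}). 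The correct comparison for spanning and integrality is not with $\dot{\B}$ but with the generating set of $\aAp\UUidot$ built from the elements $B^{(n)}_{i,\zeta}$ of Proposition~\ref{prop:ipiDP} (equivalently, via the surjectivity statement of Lemma~\ref{lem:surjA1}/Corollary~\ref{cor:surjA2}): unitriangularity of $(b_1\diamondsuit_{\zeta_\imath}b_2)^\imath_{\lambda,\mu}$ against the images of that spanning set in $L^\imath(\lambda,\mu)$ for $\lambda,\mu\gg0$, combined with asymptotic faithfulness, gives both linear independence and the $\A^\pi$-spanning of ${}_\A\UUidot$. With that substitution (and the $\pi$-basis bookkeeping, which is the genuinely covering-specific point), your outline matches the intended proof.
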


%%%%%%%%%%%%%%%%%%%%%%%%%%%%%%%%%%%%%%%%%%%%%%%%%%%%%%%%%%%%%%%%%%%%%

\end{document}